\documentclass[10pt]{amsart}
\usepackage{amsmath,amsthm,amssymb,amsfonts,eucal,amscd,mathbbol,mathrsfs,tikz-cd}
\usepackage[shortlabels]{enumitem}
\usepackage{relsize}
\usepackage{cite}
\usepackage{setspace}
\usepackage[all,cmtip]{xy}
\usepackage{graphicx}
\usepackage{subfig}
\usepackage{leftidx}
\usepackage[bookmarks,bookmarksnumbered, plainpages=false, pdfpagelabels]{hyperref}
\usepackage{xcolor}
\usepackage{extarrows}
\usepackage{rotating}
\usepackage{scalefnt}
\usepackage{enumitem}

\hypersetup{
 colorlinks   = true, 
 urlcolor     = green, 
 linkcolor    = blue, 
 citecolor   = red 
}

\newtheorem{thm}{Theorem}
\newtheorem{cor}{Corollary}
\newtheorem{lem}{Lemma}
\newtheorem{prop}{Proposition}

\theoremstyle{definition}
\newtheorem{defn}{Definition}

\newtheorem{remark}{Remark}

\newtheorem{example}{Example}

\newcommand{\R}{\ensuremath{\mathbb{R}}}

\newcommand{\Z}{\ensuremath{\mathbb{Z}}}

\def\i{\infty}

\def\p{\partial}
\def\cal{\mathcal}

\def\supp{\emph{supp}}

\def\a{\alpha}
\def\b{\beta} 
\def\g{\gamma}
\def\d{\delta}
\def\e{\epsilon}

\def\l{\lambda}
\def\s{\sigma} 

\def\o{\omega}

\def\G{\Gamma}
\def\D{\Delta}

\def\n{\nabla} 

\def\A{\cal{A}}

\def\C{\cal{C}}
\def\E{\cal{E}}

\def\L{\cal{L}}

\def\U{\cal{U}}

\def\fg{\mathfrak{g}}

\DeclareRobustCommand{\svdots}{
  \vcenter{%
    \offinterlineskip
    \hbox{.}
    \vskip0.25\normalbaselineskip
    \hbox{.}
    \vskip0.25\normalbaselineskip
    \hbox{.}%
  }%
}

\renewcommand{\theenumi}{(\alph{enumi})}
\makeatletter
	\renewcommand{\p@enumii}{\theenumi}
\makeatother                                            

\begin{document} 
	
\author[H. Pugh]{H. Pugh} 
\title{Higher Covariant Derivative and the Bundle of Dirac Currents}

\maketitle
\begin{abstract}
Using the higher covariant derivative on a manifold \( M \) equipped with a torsion-free connection, we define a natural surjective bundle map \( \Phi \) from \( (\otimes(TM))\otimes (\wedge(TM)) \) to the vector bundle \( \U(M) \) of de Rham currents on \( M \) supported in a single (variable) point. The resulting quotient bundle can be thought of as a bundle of generalized Weyl algebras, with the symplectic form replaced with the Riemannian curvature tensor. The fibers of the bundle \( \U(M) \) are differential co-algebras, and the boundary, co-product and co-unit stitch together to form bundle maps which lift via \( \Phi \) to commuting bundle maps on \( (\otimes(TM))\otimes (\wedge(TM)) \). Interior product, higher-order covariant differentiation, and their \( L^2 \) adjoints also form bundle maps on \( \U(M) \) which lift via \( \Phi \). The higher-order covariant derivative in particular is an \( \R \)-algebra representation of the space \( C^\i(\otimes(TM)) \) equipped with a non-standard, \emph{covariant product}.  Its composition with interior product yields a quantization of \( \U(M) \) corresponding to a Hopf-algebraic smash product.

Finitely supported and locally finitely supported sections functors can be applied to \( \U(M) \), yielding the spaces of finitely supported and locally finitely supported currents, respectively. In particular, the finitely supported currents on a smooth manifold are a filtered differential graded co-algebra in duality with differential forms.
\end{abstract}

\section{Introduction}
A distribution supported in the origin in \( \R^n \) is a linear combination of the \emph{Dirac delta} distribution and its weak derivatives, \[ f \mapsto \frac{\p^I f}{\p x^I}(0), \] where \( I \) is a multi-index \cite[Ch. III, Thm. XXXV]{schwartz0}. Since differentiation is commutative in \( \R^n \), the vector space of these distributions with values in \( \R \) is isomorphic to the symmetric algebra \( S(T_0\R^n) \). Indeed, a non-zero element \( a^I x_I \) of \( S(T_0 \R^n) \) determines a non-zero compactly supported distribution: Evaluate it against the polynomial \( a_I x^I \).

More generally, a de Rham current of homogeneous dimension \( k \) supported in a point \( p\in\R^n \) is a linear combination of currents \[ \o \mapsto \left(\frac{\p^I}{\p x^I} \o\right)_p(\a_I), \] where \( \a_I\in \wedge^k(T_p\R^n) \) (See e.g. \cite[Lem 2]{melrose}.) The set of such currents is thus isomorphic to \( S(T_p \R^n)\otimes \wedge^k(T_p \R^n) \). Taking the disjoint union over \( p\in \R^n \), the set of real-valued currents supported in at most one point in \( \R^n \) is identified with the bi-graded bundle \( \U(\R^n)=S(T\R^n)\otimes \wedge(T\R^n) \), modulo the zero section (since the zero current is represented as the zero vector in each fiber.)

Replacing \( \R^n \) with a smooth manifold \( M \), it is not hard to show that the set of \emph{Dirac currents} in \( M \), those whose supports are contained in a single point, has the structure of a smooth vector bundle \( \U(M) \) over \( M \), again modulo the zero section. The bundle \( \U(M) \) is filtered by order\footnote{A compactly supported current \( T \) has order \( r \) if it extends to a continuous linear functional on \( C^r \) forms, and does not extend further to \( C^s \) forms for \( s<r \). Every compactly supported current has finite order.} and graded by dimension. We will show:
\begin{thm}
	Each fiber of \( \U(M) \) has the structure of a filtered differential graded co-algebra, where the co-product \( \D \) dualizes to wedge product of differential forms, in the sense that \( \o \otimes \eta (\D T) = (\o \wedge \eta) (T) \), and the differential dualizes to exterior derivative. 
\end{thm}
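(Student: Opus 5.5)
The plan is to work fiberwise and dualize. Fix \( p\in M \) and let \( \U_p(M) \) be the fiber, i.e.\ the space of currents supported in \( \{p\} \), together with zero. We use the germ space \( \E_p \) of smooth forms at \( p \). A current \( T \) supported at \( p \) of order \( \le r \) factors through the finite-dimensional jet space \( J^r_p(\wedge T^*M) \), because such a current annihilates every form whose \( r \)-jet vanishes at \( p \). The local description from the introduction gives this: in a chart, \( T \) is a finite sum \( \o\mapsto (\p^I\o/\p x^I)_p(\a_I) \) with \( |I|\le r \).

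So \( F_r\U_p = (J^r_p)^* \) is a finite-dimensional vector space, and \( \U_p=\varinjlim_r (J^r_p)^* \) is the restricted (continuous) dual of the filtered algebra \( \E_p \), which is filtered by vanishing order at \( p \). The boundary is defined as the transpose \( \p T(\o)=T(d\o) \). It preserves support, so it maps \( \U_p \) to itself. It lowers dimension by one, satisfies \( \p^2=0 \) because \( d^2=0 \), and sends order \( r \) to order \( \le r+1 \), because \( d \) maps \( r+1 \)-jets to \( r \)-jets. The grading by dimension is the dual of the degree grading of forms.

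For the co-product I transpose the wedge product. Wedge product induces a map \( m_r\colon J^r_p\otimes J^r_p\to J^r_p \) by the Leibniz rule, since \( j^r(\o\wedge\eta) \) depends only on \( j^r\o \) and \( j^r\eta \). Because these spaces are finite-dimensional, \( m_r^* \) lands in \( (J^r_p)^*\otimes (J^r_p)^* \). I define \( \D T=m_r^*T \) for \( T \) of order \( \le r \). This is independent of \( r \), since the jet projections are compatible with wedge. It satisfies \( \o\otimes\eta(\D T)=(\o\wedge\eta)(T) \) by construction. Moreover, it is filtered: it lands in \( \sum_{i+j=r}F_i\otimes F_j \). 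To see this, note that if \( \o \) vanishes to order \( i+1 \) and \( \eta \) vanishes to order \( j+1 \) with \( i+j\ge r \), then \( \o\wedge\eta \) vanishes to order \( >r \). In local coordinates this is the Leibniz expansion \( \p^I(\o\wedge\eta)=\sum_{J+K=I}\binom{I}{J}\p^J\o\wedge\p^K\eta \). Combined with the dual of \( \wedge^k\to\wedge^i\otimes\wedge^j \), this gives an explicit formula for \( \D \) on \( S(T_p)\otimes\wedge(T_p) \).

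The co-unit is \( \epsilon(T)=T(1) \), i.e.\ evaluation on the constant function \( 1 \), which is nonzero only in dimension \( 0 \). Coassociativity, counitality, and graded cocommutativity follow by dualizing associativity, unitality, and graded commutativity of \( \wedge \) on jets. Here I use that \( (A\otimes A)^*=A^*\otimes A^* \) for finite-dimensional \( A=J^r_p \). The compatibility of \( \p \) with \( \D \) dualizes the graded Leibniz rule \( d(\o\wedge\eta)=d\o\wedge\eta+(-1)^{|\o|}\o\wedge d\eta \). This gives \( \D\p=(\p\otimes 1+ \tau\otimes\p)\D \), where \( \tau \) is the sign operator on the first factor. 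So \( \p \) is a coderivation, and \( \U_p \) is a differential graded co-algebra.

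The main obstacle is well-definedness of \( \D \): showing that \( m_r^* T \) really lies in the algebraic tensor product \( \U_p\otimes\U_p \) and not merely in some completed dual. This is handled by the finite-order/jet factorization. That step relies on the structure theorem for point-supported currents (Schwartz, Melrose), which I would invoke chart-locally and then check is coordinate-independent. Independence holds because jets and wedge are intrinsic. A secondary check is that the filtration is compatible with \( \p \) in the precise sense the paper intends, i.e.\ that \( \p \) raises order by at most one.
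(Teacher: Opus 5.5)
Your proof is correct, and it takes a genuinely different route from the paper's. The paper never passes to jets. It fixes a torsion-free connection and defines the surjection $\Phi_p\colon\otimes(T_pM)\boxtimes\wedge(T_pM)\to\U(M;p)$ via higher covariant derivatives. It then uses the shuffle Leibniz rule for $\n^j$ (Proposition \ref{prop4}) to get $\Phi_p(x)(\o\wedge\eta)=(\Phi_p\otimes\Phi_p)(\D x)(\o\otimes\eta)$ (Theorem \ref{thm:co-alg}). So the transpose of $\wedge$ factors through $\U\otimes\U$ because it is the image of the standard tensor/wedge co-product. Co-associativity, the co-unit $T\mapsto T(1)$ and the filtration by tensor order are inherited from $\otimes(T_pM)\boxtimes\wedge(T_pM)$. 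Independence of the connection is a separate argument (Corollary \ref{cor:co-alg}), and the boundary appears only at the end of Section \ref{sec:distinguished}, as $tr([\mathbb{DE}^\dagger])=\p$ with a lift to the tensor side.

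Your version replaces all of this with finite-dimensional duality for the truncated graded-commutative algebra $J^r_p(\wedge T^*M)$. Intrinsicness is then automatic. The co-algebra axioms, the filtration, $\p(F_r)\subset F_{r+1}$ and the coderivation identity $\D\p=(\p\otimes 1+\tau\otimes\p)\D$ become formal transposes of properties of $\wedge$ and $d$; the last of these is only stated, not checked, in the paper. What the paper's route buys is an explicit lift $\D_\otimes$ with $\Phi_p$ a co-algebra homomorphism, which it then uses to make $\D$, $\e$ and the boundary into smooth bundle maps. Both routes rest on the same structure theorem for point-supported currents: your jet factorization, the paper's Proposition \ref{prop:surjective}.

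One small slip: the threshold in your filtration remark is off by one. Write $I_a\subset J^r_p$ for the jets of forms whose $(a-1)$-jet at $p$ vanishes. The annihilator of $\sum_{i+j=r}F_i\otimes F_j$ in $J^r_p\otimes J^r_p$ is $\sum_{a+b=r+1}I_a\otimes I_b$. So to force $\o\wedge\eta\in I_{r+1}=0$ you need $\o\in I_{i+1}$, $\eta\in I_{j+1}$ with $i+j\ge r-1$, not $i+j\ge r$. For example, for $r=1$ in one variable the relevant case is $I_1\otimes I_1$, i.e. $(fg)'(p)=0$ when $f(p)=g(p)=0$. The coordinate Leibniz expansion you quote gives the correct containment directly, so nothing is lost.
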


In the presence of additional structure on \( M \), we will define a natural surjective bundle map \( \Phi: \otimes(TM)\otimes \wedge(TM) \to \U(M) \) so that on fibers \( \Phi \) is a map of filtered and graded co-algebras. This additional structure can be either a Lie group structure, or a torsion-free connection on \( M \). In this paper we will focus on the second case. The first case is addressed in detail in \cite{coprodlie}, but briefly:

\subsection{Lie Groups}
If \( G \) is a Lie group with Lie algebra \( \fg \), we can identify \( \U(G) \) with the (trivial) bundle \( G\times (U(\fg)\otimes \wedge(\fg^-)) \), where \( U(\fg) \) is the universal enveloping algebra of \( \fg \) and \( \fg^- \) is the Lie algebra \( \fg \) with negative Lie bracket. Here we are interpreting the elements of \( U(\fg) \) as left-invariant differential operators \( C^\i(G)\to C^\i(G) \), and \( \wedge^k(\fg^-) \) as right-invariant tangent \( k \)-vector fields, so \( (p, s\otimes \a)\in  G\times U(\fg)\otimes \wedge^k(\fg^-) \) is identified with the current \( \o \mapsto [s(\o)]_p(\a), \) where \( s \) is extended to \( k \)-forms by the Leibniz rule and commutation with \( d \).

The canonical quotient map \( \otimes(\fg )\to U(\fg) \) allows us to write \( \U(G) \) as a quotient of the bundle \( \otimes(TG)\otimes \wedge(TG) \). On fibers, the resulting quotient map is a map of filtered graded co-algebras.	

\subsection{Smooth Manifolds with Connection}
Suppose \( M \) is a smooth manifold equipped with a torsion-free connection \( \n \). In the same way that for a Lie group \( G \), the bundle \( \U_k^r(G) \) is a quotient of \( \otimes^{\leq r} (TG)\otimes \wedge^k(TG) \), we will define a natural surjective bundle map \( \Phi: \otimes^{\leq r}(TM)\otimes \wedge^k(TM) \to \U_k^r(M) \). This map will be defined using the higher covariant derivative operator \( \n^r \) in place of the Lie derivative by left-invariant vector fields.

In Section \ref{sec:covdiv}, we will describe the higher covariant derivative \( \n^\bullet \) and its properties. In particular, we will define a unital, associative \( \R \)-linear \emph{covariant product} \( \odot \) on \( C^\i(\otimes(TM)) \) such that if \( E\to M \) is a bundle with connection, then
\begin{align*}
	\n^\bullet_{\underline{\,\,}}: C^\i(\otimes(TM)) &\to End(C^\i(E)) \\
	X&\mapsto \n^\bullet_X
\end{align*}
is an algebra map, turning \( C^\i(E) \) into a left \( C^\i(\otimes(TM)) \) module. In fact, we show later in Section \ref{section:combined} that the composition of \( \n^\bullet \) with wedge product \( \wedge \),  
\begin{align*} 
\wedge\circ \n^\bullet : C^\i(\wedge(E))\otimes_{C^\i(M)}C^\i(\otimes(TM))&\to End(C^\i(\wedge(E))\\
\eta \otimes X \mapsto \left(\o \mapsto \eta \wedge \n^\bullet_X \o\right),
\end{align*}
is a well-defined algebra map, where the product on the domain takes the from of the Hopf-algebraic smash product. 

In Section \ref{sec:derivativesatapoint}, we will use the higher covariant derivative to define the bundle map \( \Phi \). Analogous to the identification of \( \U_k(G) \) as a quotient of \( \otimes(\fg)\otimes \wedge^k(\fg^-) \) by a sub-bundle generated by terms such as \( (v\otimes w-w\otimes v-[v,w])\otimes \a \), the fiber of \( \U_k(M) \) at \( p \) is identified with a quotient of \( \otimes(T_pM) \otimes \wedge(T_pM) \) by terms such as \( (v\otimes w - w\otimes v)\otimes \a + 1\otimes (R_{v,w} \a) \), where \( v,w\in T_p M \) and \( \a\in \wedge^k(T_pM) \). Thus, the curvature \( R_{v,w} \) is baked into the quotient for \( \U_k(M) \) the same way that the Lie bracket is for \( \U_k(G) \). This is analogous to the defining quotient map of a Weyl algebra, except the symplectic form is vector-valued and the resulting structure is not generally an algebra:
	
We will prove in Theorem \ref{thm:algebra} that unlike \( \U_k(G) \), the fiber of \( \U_k(M) \) at \( p\in M \) does not inherit a product structure from \( \otimes(T_pM) \otimes \wedge(T_pM) \) unless the manifold is \emph{asymptotically flat at \( p \)}, meaning that \( \n^j R_p =0 \) for all \( j\geq 0 \). The fiber of \( \U(M) \) at \( p \) does, however, have a co-product structure \( \D_p \), dual to wedge product on differential forms. As the base point varies, these structures \( \D_p \) form a bundle map \( \D \) defined on \( \U(M) \). Moreover, \( \D \) lifts via \( \Phi \) to a natural bundle map \( \D_\otimes \) defined on \( \otimes(TM) \otimes \wedge(TM) \), which on fibers is the tensor product of the tensor and exterior co-product structures on \( \otimes(T_pM) \) and \( \wedge(T_pM) \), respectively. In particular, on fibers, \( \Phi \) is a map of filtered graded co-algebras.
	
In Section \ref{sec:distinguished}, we will give formulas for certain distinguished endomorphisms of \( \otimes(TM)\otimes \wedge(TM) \) which descend to endomorphisms of \( \U(M) \), namely those dual to interior product and covariant derivative. Using these, we provide a formula for a naturally defined endomorphism of \( \otimes(TM)\otimes \wedge^\bullet(TM) \), which commutes with \( \D_\otimes \) and descends via \( \Phi \) to the boundary operator on the fibers of \( \U(M) \).

Finally, the finitely supported sections functor \( \G^{finite} \) and the locally finitely supported sections functor \( \G^{loc.finite} \) can be applied to \( \U(M) \) to yield the space of finitely supported currents \( \A_\bullet(M) \) on \( M \) and the space of locally finitely supported currents \( \A^{loc}_\bullet(M) \) on \( M \), respectively. The space \( \A(M) \) is a filtered differential graded co-algebra in duality with the differential graded algebra \( \E^\bullet(M) \) of smooth forms on \( M \).

Likewise, \( \A^{loc}_\bullet(M) \) can be put in duality with the space \( \cal{D}^\bullet(M) \), but \( \A^{loc}_\bullet(M) \) is neither filtered nor a co-algebra. With its natural topology it has a \emph{completed} co-algebra structure, however, and the completed co-product of an element is the (possibly infinite) sum of the co-products at each base point. 

\begin{remark}
	The bundle \( \U_k^r(M) \) is naturally isomorphic to \( J^r(\wedge^k T^*M)^* \), the dual bundle of the order-\( r \) jet bundle of \( \wedge^k T^* M \). Its smooth sections \( C^\i(\U_k^r(M)) \) are precisely the order-\( r \) partial differential operators from differential \( k \)-forms on \( M \) to smooth functions on \( M \).

\end{remark}

\subsection{Notation and Terminology}

if \( E\to M\) is a vector bundle, then \( \otimes(E), S(E), \wedge(E), \) and \( End(E) \) are, respectively, the tensor, symmetric, anti-symmetric, and endomorphism functors applied to \( E \). These are vector bundles whose fiber above \( p\in M \) are the tensor algebra \( \otimes(E_p) \), symmetric algebra \( S(E_p) \), exterior algebra \( \wedge(E_p) \), and the space of linear endomorphisms \( End(E_p) \), respectively. To clarify notation, we will often use the square tensor product symbol \( \boxtimes \) to denote the tensor product in various contexts. For example, the tensor product of \( \otimes(TM) \) and \( \wedge(E) \) will be denoted \( \otimes(TM)\boxtimes \wedge(E) \).

We will make use of a number of co-products (as in, the structure pre-dual to an algebraic product,) and will reserve the symbol \( \D \) for such use. On the tensor algebra, the co-product will be assumed to be the one compatible with the tensor product, not the co-free co-product. We will call this the \emph{tensor co-product}. We call the corresponding co-product on the exterior algebra the \emph{wedge co-product}. 

We will also make use of (sumless) Sweedler notation for various co-products: \( \D x = x_{(1)}\otimes x_{(2)}. \) Co-associativity allows us to write \[ x_{(1)}\otimes x_{(2)}\otimes x_{(3)} := (x_{(1)})_{(1)}\otimes (x_{(1)})_{(2)} \otimes x_{(2)} = x_{(1)}\otimes (x_{(2)})_{(1)}\otimes (x_{(2)})_{(2)},\] inductively defining \( x_{(1)}\otimes \cdots \otimes x_{(r)} \) the same way. We will occasionally use Sweedler notation on the indexing set of a basis, the operation being induced by the co-product applied to the underlying basis vectors. 

When each fiber of a vector bundle \( E \) over \( M \) is equipped with a co-product over \( \R \) smoothly varying on the fibers, these co-products can be applied fiber-wise to a smooth section of \( E \) and the resulting section of \( E\otimes E \) can be treated as the tensor product, over \( C^\i(M) \), of sections in \( E \). This defines a co-product on \( C^\i(E) \), as a module over \( C^\i(M) \). We will make particular use of this co-product on the tensor bundle \( \otimes(TM) \), using the tensor co-product at each point.

If \( E \) is an infinite dimensional vector bundle filtered by a family \( (E_i)_{i\in I} \) of finite-dimensional vector sub-bundles \( E_i \) of \( E \), we denote by \( C^\i(E) \) the filtered vector space \( \cup_i C^\i(E_i) \).

\section{Higher-Order Covariant Derivative}\label{sec:covdiv}
Let \( M \) be a smooth manifold and suppose \( E\to M \) and \( F\to M \) are finite dimensional vector bundles with connections \( \n \) and \( \bar{\n} \), respectively. The tensor product connection \( \n\otimes \bar{\n} \) on \( E\otimes F \to M \) is defined via the Leibniz rule as
\begin{equation}\label{connection2}
	\left(\n\otimes \bar{\n}\right)_X(\a\otimes\b)= \left(\n_X\a\right)\otimes \b + \a \otimes \left(\bar{\n}_X \b\right).
\end{equation}

The dual connection \( \n^* \) on \( E^*\to M \) is defined so that the covariant derivative commutes with contraction \( E\otimes E^* \to M\times \R \), i.e. 
\begin{equation}\label{connection1}
	\left(\n^*_X \o\right)(Y) := X(\o(Y))-\o\left(\n_X Y\right).
\end{equation}

Suppose the tangent bundle \( TM \to M \) is also equipped with a connection, which we will denote \( \hat{\n} \). Putting \eqref{connection1} and \eqref{connection2} together, we get a connection \( \tilde{\n} \) on \( T^* M \otimes E \):
\[ 
\tilde{\n}_X(\o \otimes \b) = \hat{\n}^*_X(\o)\otimes \b + \o\otimes\n_X \b \quad \in C^\i\left(T^*M\otimes E\right).
\]
Contracting this with a vector field \( Y \) on \( M \), 
\begin{align*}
	\left(\tilde{\n}_X(\o\otimes\b)\right)(Y)&=\left( \hat{\n}^*_X(\o) \right)(Y)\cdot\b + \o(Y)\cdot \n_X\b \quad \in C^\i(E)\\
	&= X(\o(Y))\cdot \b - \o\left( \hat{\n}_X Y\right) \cdot \b + \o(Y) \cdot \n_X\b\\
	&=\n_X(\o(Y)\cdot\b) - \o\left(\hat{\n}_X Y\right) \cdot \b.
\end{align*}
If we replace \( \o\otimes\b \) with \( \n \a \), where \( \a \) is a section of \( E \), we see that the map 
\[ 
	\tilde{\n}\circ\n : C^\i(E)\to C^\i\left(T^*M \otimes \ T^*M\otimes E\right)
\] 
is given by the formula
\begin{align*}
	\left(\tilde{\n}\circ\n (\a)\right)(X,Y) &= \n_X \n_Y \a - \n_{\hat{\n}_X Y}\a\\
	&=: \n_{X,Y}^2 \a,
\end{align*}
i.e. the second covariant derivative of \( \a \). Observe that \( \n_{X,Y}^2 \a \) is tensorial in \( X \) and \( Y \), and that if \( \hat{\n} \) is torsion-free, then the commutator \( \left( \n_{X,Y}^2 - \n_{Y,X}^2 \right) \a \) is tensorial in \( \a \). In this case, we let \[ R_{X,Y}^E = \n_{X,Y}^2 - \n_{Y,X}^2, \] and call \( R_{X,Y}^E \) the Riemann curvature tensor on \( E \). To simplify notation, we will reuse the superscript \( E \) when \( E \) replaced with a tensor, exterior, or symmetric power thereof.

We can extend the above process indefinitely:

\begin{tikzcd}
	C^\i(E) \arrow[r, "\n"] & C^\i\left(T^*M\otimes E\right) \arrow[r, "\tilde{\n}"] & C^\i\left(T^*M\otimes T^*M\otimes E\right) \arrow[r, "\tilde{\tilde{\n}}"] & C^\i\left(T^*M^{\otimes 3}\otimes E\right) \arrow[r] & \cdots
\end{tikzcd}

\vphantom{$\tilde{X}$}

Define
\[
\n^j = \underbrace{\tilde{\overset{\text{\smaller[2]$\svdots$}}{\tilde{\n}}}\,\,\circ\cdots\circ\tilde{\tilde{\n}}\circ\tilde{\n}\circ\n}_{j\text{-times}} \  :C^\i(E)\to C^\i\left(T^*M ^{\otimes j} \otimes E\right).
\]

By contracting with \( j \) tangent vector fields, this gives us an order-\( j \) higher covariant derivative
\[
\n_{X_1,\dots,X_j}^j: C^\i(E)\to C^\i(E),
\]
tensorial in \( X_1,\dots, X_j \). Alternatively, we will use the notation \( \n_{X_1\otimes\dots\otimes X_j}^j \) to denote this map, where the tensor is taken over \( C^\i(M) \). It will also be convenient to write \( \n_1^0=Id \), and in general \( \n_f^0 \a = f\cdot\a \). To avoid visual clutter, we will occasionally omit the superscript, so that \( \n_v \) refers to the order-\( j \) covariant derivative if \( v\in C^\i(\otimes^j(TM)) \).

The formula for \( \n^j \) is given inductively:
\begin{equation}\label{eq:higher}
	\n_{X_0,\dots,X_j}^{j+1} = \n_{X_0}\circ\n_{X_1,\dots,X_j}^{j} - \sum_{k=1}^j \n_{X_1,\dots,X_{k-1},\hat{\n}_{X_0}X_k,X_{k+1},\dots,X_j}^{j}.
\end{equation}

\begin{example}
	\label{ex:3rd}
	\( \n_{X,Y,Z}^3= \n_X\circ\n_{Y,Z}^2-\n_{\hat{\n}_X Y, Z}^2 - \n_{Y,\hat{\n}_X Z}^2 \).
\end{example}

In fact, using Sweedler notation for the tensor co-product, 
\begin{prop}\label{prop:mega}
	For \( v\in C^\i(\otimes^i (TM) )\) and \( w\in C^\i(\otimes^j (TM)) \), where \( i,j\geq 0 \),
	\begin{equation}\label{eq:mega}
		\n_v^i \circ \n_w^j  = \n_{v_{(1)}\otimes\hat{\n}_{v_{(2)}}^\bullet w}^{i+j-\bullet},
	\end{equation}
	
	where \( 0\leq \bullet \leq i \) is the integer such that \( v_{(2)}\in C^\i(\otimes^\bullet TM) \).
\end{prop}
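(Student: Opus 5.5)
The plan is to prove \eqref{eq:mega} by induction on \( i \), simultaneously for all vector bundles \( E \) with connection. The simultaneity matters because we will apply the statement to \( E=\otimes(TM) \) with the connection \( \hat{\n} \). I read the tensor co-product as the one for which vectors are primitive, so that \( \D(X_1\otimes\cdots\otimes X_i) \) is the sum over unshuffles \( X_S\otimes X_{S^c} \). Both sides of \eqref{eq:mega} are \( C^\i(M) \)-linear in \( v \). They are also additive in \( w \), and both sides obey the same rule for multiplying \( w \) by a function. So it suffices to treat decomposable \( v \), and the Sweedler expression is well defined over \( C^\i(M) \).

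\emph{Base cases.} The case \( i=0 \) is trivial. For \( i=1 \), write \( v=X \). Then \( \D X = X\otimes 1 + 1\otimes X \), and the right side of \eqref{eq:mega} becomes \( \n^{1+j}_{X\otimes w}+\n^j_{\hat{\n}_X w} \). Here \( \hat{\n}_X w \) is the covariant derivative of \( w \) in the bundle \( \otimes^j(TM) \). By the Leibniz rule this equals \( \sum_k X_1\otimes\cdots\otimes\hat{\n}_X X_k\otimes\cdots\otimes X_j \) for decomposable \( w \). So the \( i=1 \) case is exactly the inductive formula \eqref{eq:higher}, rearranged as
\[ \n_X\circ\n_w = \n_{X\otimes w}+\n_{\hat{\n}_X w}. \]

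\emph{Inductive step.} Write \( v=X\otimes u \) with \( u \) of degree \( i-1 \). Applying the \( i=1 \) case with \( w=u \) gives \( \n_{X\otimes u}=\n_X\n_u-\n_{\hat{\n}_X u} \). Hence
\[ \n_{X\otimes u}\n_w=\n_X(\n_u\n_w)-\n_{\hat{\n}_X u}\n_w. \]
Apply the inductive hypothesis to both products; note that \( \hat{\n}_X u \) also has degree \( i-1 \). The first term becomes \( \n_X\n_{u_{(1)}\otimes\hat{\n}_{u_{(2)}}w} \). Apply the \( i=1 \) case once more to get
\[ \n_{X\otimes u_{(1)}\otimes\hat{\n}_{u_{(2)}}w}+\n_{\hat{\n}_X(u_{(1)}\otimes\hat{\n}_{u_{(2)}}w)}. \]
Expand the last subscript by Leibniz into \( \hat{\n}_Xu_{(1)}\otimes\hat{\n}_{u_{(2)}}w + u_{(1)}\otimes\hat{\n}_X\hat{\n}_{u_{(2)}}w \). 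Then use the \( i=1 \) case for the bundle \( \otimes(TM) \) with connection \( \hat{\n} \):
\[ \hat{\n}_X\hat{\n}_{u_{(2)}}w=\hat{\n}_{X\otimes u_{(2)}}w+\hat{\n}_{\hat{\n}_Xu_{(2)}}w. \]
The subtracted term is \( \n_{(\hat{\n}_Xu)_{(1)}\otimes\hat{\n}_{(\hat{\n}_Xu)_{(2)}}w} \). To handle it, I will use that \( \hat{\n}_X \) commutes with the tensor co-product, since it acts as a derivation on tensors. This gives \( \D(\hat{\n}_Xu)=\hat{\n}_Xu_{(1)}\otimes u_{(2)}+u_{(1)}\otimes\hat{\n}_Xu_{(2)} \), so the subtracted term cancels exactly the two terms in which \( \hat{\n}_X \) falls on \( u \).

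What survives is \( X\otimes u_{(1)}\otimes\hat{\n}_{u_{(2)}}w + u_{(1)}\otimes\hat{\n}_{X\otimes u_{(2)}}w \). This is precisely the Sweedler sum for \( \D(X\otimes u)=(X\otimes u_{(1)})\otimes u_{(2)}+u_{(1)}\otimes(X\otimes u_{(2)}) \), which completes the induction.

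\emph{Main obstacle.} The main difficulty is the bookkeeping in the inductive step, namely checking that the cancellation is exact. The two points needing care are the identity \( \D\circ\hat{\n}_X=(\hat{\n}_X\otimes 1+1\otimes\hat{\n}_X)\circ\D \) on \( \otimes(TM) \), and applying the inductive hypothesis uniformly to the bundle \( \otimes(TM) \), which is why the induction must range over all bundles at once.
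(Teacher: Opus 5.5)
Your proposal is correct and follows the paper's proof essentially step for step. The shared route is induction on \( i \) with \( v = X\otimes u \), using the \( i=1 \) case on both \( E \) and \( \otimes(TM) \), and the leftover terms cancel because \( \hat{\nabla}_X \) is a coderivation for the tensor co-product; the paper verifies the \( i=1 \) case, formula \eqref{eq:higher}, directly from the definition of the iterated connection \( \tilde{\nabla} \), whereas you take it as given. One small remark: on \( \otimes(TM) \) you only need the \( i=1 \) case, not the full inductive hypothesis, so running the induction over all bundles at once is harmless but not actually required.
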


\begin{proof}
	We proceed by induction on \( i \), proving the \( i=0 \) and \( i=1 \) cases separately: The \( i=0 \) case expresses the tensoriality of \( \n^j \), which can be verified from the definition, or from \eqref{eq:higher}, which is the \( i=1 \) case:
	
	Suppose \( \a\in C^\i(E) \) and write \( \n^j \a \) as an Einstein summation of simple tensors, \( \n^j \a = \o_1^s\otimes \cdots \otimes \o_j^s \otimes \g^s \). Then
	\begin{align*}
		\n_{X_0,\dots,X_j}^{j+1}\a &= \tilde{\overset{\text{\smaller[2]$\svdots$}}{\tilde{\n}}}\,\, \left(\o_1^s\otimes \cdots \otimes \o_j^s \otimes \g^s \right) \left(X_0,\dots,X_j\right)\\
		&=\left[ \left( \n^* \o_1^s \right) \otimes \o_2^s \otimes \cdots \otimes \o_j^s \otimes \g^s + \cdots + \o_1^s\otimes \cdots\otimes \o_{j-1}^s\otimes \left(\n^* \o_j^s\right) \otimes \g^s \right.\\
			&\qquad\qquad \left. + \o_1^s\otimes\cdots\otimes\o_j^s\otimes\left( \n \g^s \right) \right] \left(X_0,\dots,X_j\right)\\
		&= \left(\n_{X_0}^* \o_1^s \right)\left(X_1\right)\o_2^s\left(X_2\right)\cdots \o_j^s\left(X_j\right)\g^s + \cdots + \o_1^s\left(X_1\right)\cdots\o_{j-1}^s \left(X_{j-1}\right)\left( \n_{X_0}^* \o_j^s \right)\left(X_j\right)\g^s\\
			&\qquad\qquad  + \o_1^s\left(X_1\right)\cdots \o_j^s\left(X_j\right)\n_{X_0}\g^s\\
		&=\n_{X_0}\left[ \o_1^s\left(X_1\right)\cdots \o_j^s\left(X_j\right)\g^s \right]\\
			&\qquad\qquad  - \o_1^s\left(\hat{\n}_{X_0}X_1\right)\o_2^s\left(X_2\right)\cdots \o_j^s\left(X_j\right)\g^s - \cdots - \o_1^s\left(X_1\right)\cdots \o_{j-1}^s\left(X_{j-1}\right)\o_j^s\left( \hat{\n}_{X_0} X_j \right)\g^s\\
		&= \left[\n_{X_0}\n_{X_1,\dots,X_j}^j  - \n_{\hat{\n}_{X_0}X_1,X_2,\dots,X_j}^j - \dots - \n_{X_0,\dots,X_{j-1},\hat{\n}_{X_0}X_j}^j\right] \a.
	\end{align*}
	
	Now supposing \eqref{eq:mega} holds for \( i \), let us show it holds for \( i+1 \). Assume WLOG that \( v=v_0\otimes\cdots\otimes v_i \), and let \( u=v_1\otimes\cdots\otimes v_i \), so that \( v=v_0 \otimes u \). Then
	
	\begin{align*}
		\n_v^{i+1} \circ \n_w^j &= \n_{v_0} \circ \n_{u}^{i} \circ \n_w^j - \n_{\hat{\n}_{v_0}u}^{i} \circ \n_w^j\\
		&= \n_{v_0} \circ \n^{i+j-\bullet}_{u_{(1)} \otimes \hat{\n}^\bullet_{u_{(2)}}w} - \n^{i+j-\bullet}_{\left(\hat{\n}_{v_0}u\right)_{(1)}\otimes\n^\bullet_{\left(\hat{\n}_{v_0}u\right)_{(2)}}w}\\
		&= \n^{i+j+1-\bullet}_{v_0\otimes u_{(1)}\otimes \hat{\n}^\bullet_{u_{(2)}}w} + \n^{i+j-\bullet}_{\left(\hat{\n}_{v_0}\left(u_{(1)}\right)\right)\otimes \hat{\n}^\bullet_{u_{(2)}}w} + \n^{i+j-\bullet}_{u_{(1)}\otimes \hat{\n}_{v_0}\hat{\n}^\bullet_{u_{(2)}}w} - \n^{i+j-\bullet}_{\left(\hat{\n}_{v_0}u\right)_{(1)}\otimes\hat{\n}^\bullet_{\left(\hat{\n}_{v_0}u\right)_{(2)}}w}\\
		&= \n^{i+j+1-\bullet}_{v_0\otimes u_{(1)}\otimes \hat{\n}^\bullet_{u_{(2)}}w} + \n^{i+j-\bullet}_{\left(\hat{\n}_{v_0}\left(u_{(1)}\right)\right)\otimes \hat{\n}^\bullet_{u_{(2)}}w} +  \n^{i+j+1-\bullet}_{u_{(1)}\otimes \hat{\n}^\bullet_{v_0\otimes u_{(2)}}w} + \n^{i+j-\bullet}_{u_{(1)}\otimes \hat{\n}^\bullet_{\hat{\n}_{v_0} u_{(2)}}w}\\ &\qquad\qquad -\n^{i+j-\bullet}_{\left(\hat{\n}_{v_0}u\right)_{(1)}\otimes\hat{\n}^\bullet_{\left(\hat{\n}_{v_0}u\right)_{(2)}}w}.
	\end{align*}
	The first and third term sum to \( \n^{i+j+1-\bullet}_{v_{(1)}\otimes \hat{\n}^\bullet_{v_{(2)}}w} \) and the other terms cancel, leaving the desired equality.
\end{proof}

A similar calculation shows that the higher covariant derivatives satisfy the Leibniz rule: 
\begin{prop}\label{prop:leibniz}If \( \a\in C^\i(E) \) and \( \b\in C^\i(F) \), and \( v\in C^\i(\otimes^j (TM) )\) then
	\[
	(\n\otimes \bar{\n})_v^j (\a \otimes_{C^\i(M)} \b) = (\n_{v_{(1)}}^{j-\bullet}\a) \otimes_{C^\i(M)} (\bar{\n}_{v_{(2)}}^\bullet \b).
	\]
\end{prop}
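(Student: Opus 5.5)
The proof will be by induction on \( j \), mirroring the proof of Proposition \ref{prop:mega}. We write \( \n^\otimes = \n\otimes\bar{\n} \). The case \( j=0 \) is the statement \( f\cdot(\a\otimes\b) = (f\a)\otimes\b \): the tensor co-product of a scalar \( f \) is \( f\otimes 1 \), so there is nothing to prove. The case \( j=1 \) is exactly the defining formula \eqref{connection2} of the tensor product connection, since \( \D X = X\otimes 1 + 1\otimes X \) for a vector field \( X \).

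For the inductive step, assume the formula for \( j \). By tensoriality in \( v \) (the \( i=0 \) case of Proposition \ref{prop:mega}), it suffices to take \( v = X_0\otimes u \) with \( u\in C^\i(\otimes^j(TM)) \). The \( i=1 \) case of Proposition \ref{prop:mega}, which is \eqref{eq:higher}, gives
\[
	\n^{\otimes}_{X_0\otimes u}(\a\otimes\b) = \n^\otimes_{X_0}\left(\n^\otimes_u(\a\otimes\b)\right) - \n^\otimes_{\hat{\n}_{X_0}u}(\a\otimes\b).
\]
Apply the inductive hypothesis to both terms. The first term becomes \( \n^\otimes_{X_0}\big((\n_{u_{(1)}}\a)\otimes(\bar\n_{u_{(2)}}\b)\big) \). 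Expanding it with the \( j=1 \) case, and then moving \( \n_{X_0} \) past \( \n_{u_{(1)}} \) using \eqref{eq:higher} again, yields
\[
	(\n_{X_0\otimes u_{(1)}}\a)\otimes \bar\n_{u_{(2)}}\b + (\n_{\hat\n_{X_0}u_{(1)}}\a)\otimes\bar\n_{u_{(2)}}\b + (\n_{u_{(1)}}\a)\otimes\bar\n_{X_0\otimes u_{(2)}}\b + (\n_{u_{(1)}}\a)\otimes\bar\n_{\hat\n_{X_0}u_{(2)}}\b .
\]
The second term is \( (\n_{(\hat\n_{X_0}u)_{(1)}}\a)\otimes(\bar\n_{(\hat\n_{X_0}u)_{(2)}}\b) \).

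The key algebraic fact is that the tensor co-product commutes with \( \hat\n_{X_0} \):
\[
	(\hat\n_{X_0}u)_{(1)}\otimes(\hat\n_{X_0}u)_{(2)} = \hat\n_{X_0}u_{(1)}\otimes u_{(2)} + u_{(1)}\otimes \hat\n_{X_0}u_{(2)}.
\]
This holds because \( \hat\n_{X_0} \) acts on \( \otimes(TM) \) as a derivation (Leibniz rule) and \( \D \) is the deconcatenation co-product. One checks it on simple tensors by splitting into the terms where the differentiated factor falls in the left or the right piece of each deconcatenation. With this fact, the second and fourth expanded terms cancel against the subtracted term.

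The remaining two terms should assemble into \( (\n_{v_{(1)}}\a)\otimes(\bar\n_{v_{(2)}}\b) \) for \( v = X_0\otimes u \), by the identity
\[
	\D(X_0\otimes u) = X_0\otimes u_{(1)}\otimes u_{(2)} + u_{(1)}\otimes X_0\otimes u_{(2)}.
\]
Here I expect the main obstacle. For deconcatenation, the second piece should be just \( 1\otimes X_0\otimes u \), not \( u_{(1)}\otimes X_0u_{(2)} \). The match therefore requires the paper's "tensor co-product compatible with tensor product", i.e. the shuffle-type co-product in which vectors are primitive and \( \D \) is an algebra map for \( \otimes \). With that interpretation the identity above is exactly multiplicativity of \( \D \), and the argument closes. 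In the same way, the derivation property of \( \hat\n_{X_0} \) needed for the cancellation holds for this co-product, since it is a derivation of the tensor algebra commuting with an algebra map defined on generators.

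So before writing the proof I will fix this interpretation of \( \D \) and check both co-product identities on simple tensors. I will also keep track of the degree bookkeeping: \( j-\bullet \) and \( \bullet \) are the degrees of \( v_{(1)} \) and \( v_{(2)} \). Finally, I will note that all sums are finite and that tensoriality lets us reduce to simple tensors throughout.
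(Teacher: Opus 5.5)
Your induction on \( j \) — writing \( v = X_0\otimes u \), expanding with \eqref{eq:higher}, and cancelling the \( \hat\n_{X_0}u \) terms via \( \D\circ\hat\n_{X_0} = (\hat\n_{X_0}\otimes 1 + 1\otimes\hat\n_{X_0})\circ\D \) — is exactly the ``similar calculation'' the paper has in mind, modeled on its proof of Proposition \ref{prop:mega}. Your resolution of the co-product question is also correct: the paper's tensor co-product is the unshuffle co-product with vectors primitive, as the shuffle formula in Proposition \ref{prop4} and the step ``the first and third term sum to'' in the proof of Proposition \ref{prop:mega} show, since that step uses precisely \( \D(X_0\otimes u) = X_0 u_{(1)}\otimes u_{(2)} + u_{(1)}\otimes X_0 u_{(2)} \); so your argument closes as you describe.
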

Proposition \ref{prop:leibniz} holds for the tensor bundle \( \otimes(E) \), as well as the exterior \( \wedge(E) \) and symmetric \( S(E) \) bundles, given that the covariant derivative \( \n \) on \( E \) is extended to these bundles using the corresponding Leibniz rule \eqref{connection2}. Let us assume this is the case for the rest of the paper. Explicitly in the case of \( \wedge(T^*M) \), letting \( \n=\hat{\n}^* \),
\begin{prop}\label{prop4}
	If \( \o \) and \( \eta \) are differential forms on \( M \), then\footnote{This also holds for iterated Lie Derivatives: \[ \L_{v_1}\cdots \L_{v_j} (\o \wedge \eta)= \sum_{i=0}^j \sum_{\s\in Sh(i,j-i)}\left(\L_{v_{\sigma(1)}}\cdots\L_{v_{\sigma(i)}} \o \right)\wedge \left(\L_{v_{\sigma(i+1)}}\cdots\L_{v_{\sigma(j)}} \eta\right). \]
} 
	\[
	\n_{v_1,\dots,v_j}^j (\o \wedge \eta) = \sum_{i=0}^j \sum_{\s\in Sh(i,j-i)}\n_{v_{\sigma(1)},\dots,v_{\sigma(i)}}^{i}\o \wedge \n_{v_{\sigma(i+1)},\dots,v_{\sigma(j)}}^{j-i} \eta,
	\]
	where \( Sh(p,q)\subset S_{p+q} \) is the set of \( (p,q) \) riffle shuffle permutations. 
\end{prop}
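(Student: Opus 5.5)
The plan is to deduce Proposition \ref{prop4} from Proposition \ref{prop:leibniz}, proving the latter first by induction on \( j \) in the same style as Proposition \ref{prop:mega}. The form statement then follows by applying the tensor result to \( \o\otimes\eta \) and pushing it through the wedge map.

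\textbf{Step 1 (tensor Leibniz rule).} I prove Proposition \ref{prop:leibniz} by induction on \( j \).
\begin{itemize}
\item[] \( j=0 \): this is just \( C^\i(M) \)-bilinearity of \( \otimes_{C^\i(M)} \).
\item[] \( j=1 \): this is the defining formula \eqref{connection2}.
\item[] Inductive step: write \( v=v_0\otimes u \) with \( u=v_1\otimes\cdots\otimes v_j \). By \eqref{eq:higher},
\[ (\n\otimes\bar\n)^{j+1}_v(\a\otimes\b)=(\n\otimes\bar\n)_{v_0}(\n\otimes\bar\n)^j_u(\a\otimes\b)-(\n\otimes\bar\n)^j_{\hat\n_{v_0}u}(\a\otimes\b). \]
\end{itemize}
Expand the first term with the inductive hypothesis and then \eqref{connection2}. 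Each term \( \n_{v_0}(\n^{j-\bullet}_{u_{(1)}}\a) \) is rewritten via \eqref{eq:higher} as \( \n^{j+1-\bullet}_{v_0\otimes u_{(1)}}\a+\n^{j-\bullet}_{\hat\n_{v_0}u_{(1)}}\a \), and similarly for the \( \bar\n \) factor. The pieces with \( v_0 \) prepended to \( u_{(1)} \) or to \( u_{(2)} \) together give exactly the Sweedler sum for \( \D(v_0\otimes u) \), because the tensor co-product is multiplicative and \( \D v_0=v_0\otimes 1+1\otimes v_0 \). The remaining pieces are the ones with \( \hat\n_{v_0} \) applied to \( u_{(1)} \) or \( u_{(2)} \). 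These cancel against the second term, expanded by the inductive hypothesis, once I use the identity
\[ (\hat\n_{v_0}u)_{(1)}\otimes(\hat\n_{v_0}u)_{(2)}=\hat\n_{v_0}u_{(1)}\otimes u_{(2)}+u_{(1)}\otimes\hat\n_{v_0}u_{(2)}, \]
which says that the connection on \( \otimes(TM) \), being a derivation, commutes with the tensor co-product.

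\textbf{Step 2 (passing to \( \wedge \)).} The wedge map \( \wedge^pT^*M\otimes\wedge^qT^*M\to\wedge^{p+q}T^*M \) is parallel for the Leibniz-extended connections, since both connections satisfy \eqref{connection2}. So \( \n_X\circ\wedge=\wedge\circ(\n\otimes\n)_X \). A parallel bundle map commutes with every \( \n^j \): by induction using \eqref{eq:higher}, the correction terms involve only the \( TM \) slots and pass through the map unchanged. Applying \( \wedge \) to Step 1 with \( \a=\o \), \( \b=\eta \) therefore gives
\[ \n^j_v(\o\wedge\eta)=\n^{j-\bullet}_{v_{(1)}}\o\wedge\n^\bullet_{v_{(2)}}\eta. \]

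\textbf{Step 3 (unpacking Sweedler).} For \( v=v_1\otimes\cdots\otimes v_j \), the tensor co-product gives
\[ \D v=\sum_{i}\sum_{\s\in Sh(i,j-i)}(v_{\s(1)}\otimes\cdots\otimes v_{\s(i)})\otimes(v_{\s(i+1)}\otimes\cdots\otimes v_{\s(j)}). \]
This holds because \( \D \) is the algebra map determined by \( v_k\mapsto v_k\otimes1+1\otimes v_k \). Substituting this expansion into Step 2 gives exactly the formula of Proposition \ref{prop4}.

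The main obstacle is the bookkeeping in Step 1, specifically checking that the \( \hat\n_{v_0} \) correction terms cancel exactly. This rests on the derivation/co-product compatibility identity in Step 1, and I would verify that identity directly on simple tensors first.
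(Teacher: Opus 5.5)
Your proposal is correct and is essentially the paper's argument. The paper obtains Proposition \ref{prop:leibniz} by ``a similar calculation'' to Proposition \ref{prop:mega}, namely induction on \( j \) via \eqref{eq:higher} with cancellation coming from the covariant derivative commuting with the tensor co-product, and then states Proposition \ref{prop4} as its explicit instance for \( \wedge(T^*M) \), with the Sweedler sum written out as riffle shuffles. Your Step 2 (the wedge map is parallel, hence commutes with every \( \n^j \)) just spells out the passage from \( \o\otimes\eta \) to \( \o\wedge\eta \), which the paper leaves implicit when it says the Leibniz rule holds for Leibniz-extended connections on \( \wedge(E) \).
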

 
To simplify notation from here on out, let us re-use the notation \( \n \) for the dual connection on \( E^* \), as well as the connection on \( F \), and the tensor product connection on the Hom bundle \( Hom(E,F)\simeq E^*\otimes F  \). However, let's still keep track of the connection on \( TM \) separately, using \( \hat{\n} \). Since the covariant derivative commutes with contraction, so too do the higher covariant derivatives, and Proposition \ref{prop:leibniz} implies:
\begin{prop}
	\label{prop:contract}
	If \( \a\in C^\i(E) \) and \( \omega \in C^\i(E^*) \), or more generally if \( \omega\in C^\i(Hom(E,F)) \), then 
 	\[
 	\n_v^j\left(\omega(\a)\right)=\left(\n_{v_{(1)}}^{j-\bullet}\omega\right)\left( \n_{v_{(2)}}^\bullet \a \right).
 	\]
\end{prop}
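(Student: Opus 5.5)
The plan is to derive Proposition \ref{prop:contract} directly from Proposition \ref{prop:leibniz}, together with the fact that the higher covariant derivatives commute with contraction. Write \( C: Hom(E,F)\otimes E \to F \) for the contraction (evaluation) bundle map, so that \( \o(\a) = C(\o\otimes_{C^\i(M)}\a) \). Equip \( Hom(E,F)\simeq E^*\otimes F \) with the tensor product connection built from the dual connection on \( E^* \) and the connection on \( F \), and equip \( Hom(E,F)\otimes E \) with the tensor product connection. The case \( \o\in C^\i(E^*) \) is the special case \( F=M\times\R \) with the trivial connection \( d \).

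The first step is the order-one statement: \( C \) is parallel, i.e.
\[ \n_X\left(C(\xi)\right) = C\left(\n_X \xi\right) \]
for every section \( \xi \) of \( Hom(E,F)\otimes E \). By \( C^\i(M) \)-linearity it suffices to check this on \( \xi=\phi\otimes\g\otimes\a \), with \( \phi\in C^\i(E^*) \) and \( \g\in C^\i(F) \). There it reduces to \eqref{connection1} together with the Leibniz rule on \( F \):
\[ \n_X(\phi(\a)\g)= (\n_X\phi)(\a)\g + \phi(\n_X\a)\g + \phi(\a)\n_X\g. \]

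The second step is to upgrade this to all orders: I claim \( \n^j_v\circ C = C\circ\n^j_v \) for all \( v\in C^\i(\otimes^j(TM)) \), by induction on \( j \) using \eqref{eq:higher}. The cases \( j=0 \) (tensoriality) and \( j=1 \) are already done. For the inductive step, take \( v=X_0\otimes u \). Then
\[ \n^{j+1}_{X_0\otimes u}C\xi=\n_{X_0}\n^j_u C\xi-\n^j_{\hat\n_{X_0}u}C\xi, \]
and by the inductive hypothesis and the order-one case this equals
\[ C\left(\n_{X_0}\n^j_u\xi-\n^j_{\hat\n_{X_0}u}\xi\right)=C\,\n^{j+1}_v\xi. \]
The key point is that \eqref{eq:higher} has the same form on every bundle, with the same \( \hat\n \) on \( TM \), so \( C \) intertwines the recursions term by term.

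Finally, apply Proposition \ref{prop:leibniz} to \( \o\otimes\a \):
\[ \n^j_v(\o(\a)) = C\left(\n^j_v(\o\otimes\a)\right) = C\left(\n^{j-\bullet}_{v_{(1)}}\o\otimes\n^\bullet_{v_{(2)}}\a\right) = \left(\n^{j-\bullet}_{v_{(1)}}\o\right)\left(\n^\bullet_{v_{(2)}}\a\right). \]

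I expect no real obstacle; this is bookkeeping. The one point needing care is the second step. Proposition \ref{prop:leibniz} is stated for connections on the bundles themselves, so one must confirm that the connection on \( Hom(E,F) \) used in the statement is exactly the tensor product connection on \( E^*\otimes F \) (the paper's convention). One must also confirm that commutation of \( C \) with \( \n^j \) follows from commutation with \( \n \) alone, which the induction above handles. If preferred, the second step can instead be phrased as \( \n^j(C\xi)=(Id_{T^*M^{\otimes j}}\otimes C)(\n^j\xi) \) at the level of the iterated operators \( \tilde\n\circ\cdots\circ\n \). That version follows because \( Id\otimes C \) is parallel for the tensor product connection at each stage.
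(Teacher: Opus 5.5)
Your proposal is correct and follows the paper's route. The paper simply asserts that because $\n$ commutes with contraction, so do the higher covariant derivatives, and then invokes Proposition \ref{prop:leibniz}; your induction via \eqref{eq:higher} fills in that middle step, which the paper leaves implicit.
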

 
Induction on \( j \) and Proposition \ref{prop:contract} imply:
 
\begin{cor}\label{cor:pullback}
	Let \( T\in C^\i(Hom(E,F)) \). Given a third finite dimensional vector bundle \( G\to M \) with connection, the pullback section \( T^*\in C^\i(Hom(Hom(F,G), Hom(E,G))) \) satisfies \[ \left[ \left(\n_v^j (T^*)\right)\omega \right](\a) =  \omega\left(\left(\n_v^j T \right) (\a)\right) \] for all \( \a\in C^\i(E) \) and \( \omega\in C^\i(Hom(F,G)) \). I.e., \[ \n_v^j (T^*)=\left(\n_v^j T\right)^*. \]
	
	Dually, the pushforward section \( T_*\in C^\i(Hom(Hom(G,E), Hom(G,F))) \) satisfies \[ \n_v^j (T_*)= \left(\n_v^j T\right)_*. \]
\end{cor}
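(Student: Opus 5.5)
The plan is to prove the identity by induction on \( j \). The key step is to differentiate the defining relation of the pullback,
\[ (T^*\omega)(\a) = \omega(T(\a)), \]
in two different ways and compare. The case \( j=0 \) is immediate, since \( \n^0_f \) is multiplication by \( f \) and both sides are \( C^\i(M) \)-linear in \( T \). So assume \( \n_u^i(T^*)=(\n_u^i T)^* \) for all \( i<j \) and all \( u\in C^\i(\otimes^i(TM)) \). Fix \( v\in C^\i(\otimes^j(TM)) \) and apply \( \n_v^j \) to both sides of the relation above.

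On the left, apply Proposition \ref{prop:contract} twice.
\begin{itemize}
\item First, to the evaluation \( Hom(Hom(F,G),Hom(E,G))\otimes Hom(F,G)\to Hom(E,G) \).
\item Then, to the evaluation \( Hom(E,G)\otimes E\to G \).
\end{itemize}
Using co-associativity, this gives
\[ \n_v^j\big((T^*\omega)(\a)\big) = \big((\n_{v_{(1)}}T^*)(\n_{v_{(2)}}\omega)\big)(\n_{v_{(3)}}\a). \]
On the right, apply Proposition \ref{prop:contract} first to \( \omega \) evaluated on \( T(\a) \), and then to \( T \) evaluated on \( \a \):
\[ \n_v^j\big(\omega(T\a)\big)=(\n_{v_{(1)}}\omega)\big((\n_{v_{(2)}}T)(\n_{v_{(3)}}\a)\big). \]
The tensor co-product (the one compatible with tensor product, i.e.\ the unshuffle co-product) is co-commutative. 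Hence the Sweedler factors may be permuted, and both expansions can be indexed by the same triple splitting of \( v \), with one factor assigned to \( T \), one to \( \omega \), and one to \( \a \).

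Now compare term by term. In every term where the factor acting on \( T \) (respectively \( T^* \)) has degree strictly less than \( j \), the induction hypothesis converts \( \n(T^*) \) into \( (\n T)^* \). By the definition of pullback, the corresponding terms on the two sides are then equal. The only remaining terms are those where the entire degree \( j \) lands on \( T \); there \( \omega \) and \( \a \) receive the counit part, i.e.\ are left undifferentiated. Subtracting the two expansions therefore leaves
\[ \big((\n_v^jT^*)\omega\big)(\a) = \omega\big((\n_v^jT)(\a)\big) \]
for all sections \( \a \) and \( \omega \). This is exactly the claim.

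The pushforward statement is proved identically. One starts from \( (T_*\sigma)(g)=T(\sigma(g)) \) for \( \sigma\in C^\i(Hom(G,E)) \) and \( g\in C^\i(G) \), and expands both sides with Proposition \ref{prop:contract} in the same way.

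The main obstacle I expect is the Sweedler bookkeeping. One has to check carefully that the iterated contractions really produce a single triple co-product, which uses Proposition \ref{prop:leibniz} together with co-associativity, and that co-commutativity legitimately allows the factors on the two sides to be matched. Once the expansions are aligned, the cancellation of the lower-order terms is mechanical.
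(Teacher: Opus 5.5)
Your proposal is correct and is essentially the paper's own proof: induct on \( j \), expand both sides of \( (T^*\omega)(\a)=\omega(T\a) \) with Proposition \ref{prop:contract}, use the inductive hypothesis to identify every term where \( T \) receives fewer than \( j \) derivatives, and read off the remaining top-order term. The only difference is that you make explicit the co-commutativity of the tensor co-product needed to match the Sweedler factors. The paper uses this silently when it swaps \( v_{(1)} \) and \( v_{(2)} \) in the second line of its computation.
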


We will apply this later in the case that \( T \) is the Riemann curvature tensor and \( G=M\times \R \).

\begin{proof}
	The \( j=0 \) case is vacuous. Assuming the statement is true for all \( \n_u^i \) with \( i<j \),  we have by Proposition \ref{prop:contract},
	\begin{align*}
		\omega\left(\left(\n_v^j T\right)\a\right)&=\n_v^j (\omega(T\a)) - \left(\n_{v_{(1)}}\omega\right)\left(\left(\n_{v_{(2)}}^{<j}T\right)\left(\n_{v_{(3)}} \a \right)\right) \\
		&=\n_v^j \left(T^*\omega(\a)\right) - \left(\left(\n_{v_{(2)}}^{<j}\left(T^*\right)\right)\left(\n_{v_{(1)}}\omega\right)\right)\left(\n_{v_{(3)}} \a \right)\\
		&=\left[\left(\n_v^j \left(T^*\right)\right)\omega\right] (\a).
	\end{align*}
	
	The case of pushforward is similar.
\end{proof}

\begin{cor}\label{cor:consistency}
	If \( X\in C^\i(\otimes^i(E)) \), then for any \( k\geq 0 \), \[ \n_v^j(X\otimes \cdot)=(\n_v^j X)\otimes \cdot, \] where \( X\otimes \cdot \in C^\i(Hom(\otimes^k(E), \otimes^{k+i}(E))) \) is the map \( Y\mapsto X\otimes Y \). 
\end{cor}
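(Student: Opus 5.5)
The plan is to deduce Corollary \ref{cor:consistency} from Corollary \ref{cor:pullback}, or more directly from Proposition \ref{prop:leibniz} combined with the fact that the covariant derivative commutes with contraction. The two sides are sections of \( Hom(\otimes^k(E),\otimes^{k+i}(E)) \), with the connection induced from the one on \( E \). It therefore suffices to show that they agree when applied to an arbitrary section \( Y\in C^\i(\otimes^k(E)) \).

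First I will write the evaluation map as a contraction: the section \( (X\otimes\cdot)(Y)=X\otimes Y \) is the contraction of \( (X\otimes \cdot)\otimes Y \). Applying Proposition \ref{prop:contract} with \( \omega = X\otimes\cdot \) and \( \a=Y \) gives
\[
\n_v^j(X\otimes Y) = \left(\n^{j-\bullet}_{v_{(1)}}(X\otimes\cdot)\right)\left(\n^\bullet_{v_{(2)}}Y\right).
\]
On the other hand, Proposition \ref{prop:leibniz}, applied to the tensor product connection on \( \otimes^i(E)\otimes\otimes^k(E)=\otimes^{i+k}(E) \), gives
\[
\n_v^j(X\otimes Y)=\left(\n^{j-\bullet}_{v_{(1)}}X\right)\otimes\left(\n^\bullet_{v_{(2)}}Y\right).
\]

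Next I will argue by induction on \( j \), exactly as in the proof of Corollary \ref{cor:pullback}. The case \( j=0 \) is tautological, since \( \n^0_f \) is multiplication by \( f \). For the inductive step, the \( \bullet=0 \) term of each expansion is, respectively, \( (\n^j_v(X\otimes\cdot))(Y) \) and \( (\n^j_vX)\otimes Y \); here the tensor co-product has the component with \( v_{(2)} = 1 \), namely \( v\otimes 1 \). Every other term has \( j-\bullet<j \), so by the induction hypothesis \( \n^{j-\bullet}_{v_{(1)}}(X\otimes\cdot)=(\n^{j-\bullet}_{v_{(1)}}X)\otimes\cdot \), and these terms match one by one. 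Subtracting the two expansions then yields \( (\n_v^j(X\otimes\cdot))(Y)=(\n_v^jX)\otimes Y \) for all \( Y \). This is the claim, since both sides are tensorial in \( Y \).

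The main point to check is that Proposition \ref{prop:leibniz} really applies with the identification \( \otimes^i(E)\otimes\otimes^k(E)\cong\otimes^{i+k}(E) \). This means checking that the Leibniz-extended connection on \( \otimes(E) \) agrees with the tensor product connection of its factors. That holds by the defining rule \eqref{connection2}, applied iteratively to simple tensors.

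Alternatively, the statement is the pushforward half of Corollary \ref{cor:pullback} in disguise. View \( X \) as \( T\in C^\i(Hom(M\times\R,\otimes^i(E))) \); then \( X\otimes\cdot \) is \( T \) tensored with the identity on \( \otimes^k(E) \), and the identity is parallel for every \( \n^j \) with \( j\geq 1 \). I would mention this only as a cross-check.
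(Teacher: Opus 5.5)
Your proposal is correct and follows essentially the same route as the paper. You expand \( \n_v^j(X\otimes Y) \) once via Proposition \ref{prop:contract} and once via Proposition \ref{prop:leibniz}, then induct on \( j \), matching the lower-order terms by the inductive hypothesis and isolating the top-order \( v\otimes 1 \) term — and you spell that inductive step out more explicitly than the paper's one-line ``the result follows from induction on \( j \)''.
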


\begin{proof}
	Let us denote by \( M_X \) the operator \( Y\mapsto X\otimes Y \). By Proposition \ref{prop:contract}, \( \n_v^j(X\otimes Y)=(\n_{v_{(1)}}^\bullet(M_X))(\n_{v_{(2)}}^{j-\bullet}(Y)) \). On the other hand, by Proposition \ref{prop:leibniz}, \( \n_v^j(X\otimes Y) = \n_{v_{(1)}}^\bullet(X)\otimes \n_{v_{(2)}}^{j-\bullet}(Y) = M_{\n_{v_{(1)}}^\bullet(X)}(\n_{v_{(2)}}^{j-\bullet}(Y)) \), so that \[ (\n_{v_{(1)}}^\bullet(M_X))(\n_{v_{(2)}}^{j-\bullet}(Y)) = M_{\n_{v_{(1)}}^\bullet(X)}(\n_{v_{(2)}}^{j-\bullet}(Y)) \] The result follows from induction on \( j \).
\end{proof}

Corollary \ref{cor:consistency} also holds for \( \wedge(E) \) and \( S(E) \). In particular, since interior product \( \iota_X \) is dual to \( Y\mapsto X\wedge Y \), it follows from Corollary \ref{cor:pullback} that

\begin{cor}\label{cor:interior}
 	If \( X\in C^\i(\wedge(E)) \) and \( \o\in C^\i(Hom(\wedge(E), F)) \), then \[ \n_v^j \left(\iota_X \o\right)= \iota_{\n_{v_{(1)}}^\bullet X} \left(\n_{v_{(2)}}^{j-\bullet} \o\right). \]
\end{cor}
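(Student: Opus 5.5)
We obtain Corollary \ref{cor:interior} by combining three earlier results: the wedge-product analogue of Corollary \ref{cor:consistency}, the pullback statement of Corollary \ref{cor:pullback}, and the contraction rule of Proposition \ref{prop:contract}. Throughout, \( \n \) denotes the Leibniz-extended connection on \( \wedge(E) \) and on \( Hom(\wedge(E),F) \).

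Fix \( X\in C^\i(\wedge(E)) \) and write \( L_X\in C^\i(End(\wedge(E))) \) for left wedge multiplication \( Y\mapsto X\wedge Y \). The interior product is the pullback of this operator: \( \iota_X\o = L_X^*\o = \o\circ L_X \), viewed as a section of \( Hom(\wedge(E),F) \). Thus \( \iota_X\o \) is the contraction of \( \o \) with \( L_X \), i.e.\ the image of \( \o \) under the bundle map \( L_X^*\in C^\i(Hom(Hom(\wedge(E),F),Hom(\wedge(E),F))) \).

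The first step applies Proposition \ref{prop:contract}, with \( L_X^* \) playing the role of the \( Hom \)-section and \( \o \) the role of the section acted on. This gives
\[ \n_v^j(\iota_X\o)=\left(\n_{v_{(1)}}^{\bullet}(L_X^*)\right)\left(\n_{v_{(2)}}^{j-\bullet}\o\right). \]
The second step uses Corollary \ref{cor:pullback} with \( T=L_X \) and \( G=F \), which gives \( \n^\bullet_{u}(L_X^*)=(\n^\bullet_u L_X)^* \). The third step uses the exterior-algebra version of Corollary \ref{cor:consistency}, noted after its proof, which gives \( \n^\bullet_u L_X = L_{\n^\bullet_u X} \). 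Chaining these,
\[ \n_{v_{(1)}}^{\bullet}(L_X^*) = (L_{\n_{v_{(1)}}^\bullet X})^* = \iota_{\n_{v_{(1)}}^\bullet X}, \]
and substituting into the first display yields the claimed formula.

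The main thing to check is bookkeeping rather than substance. First, the Sweedler indexing must match: the degree \( \bullet \) carried by \( v_{(1)} \) is the one that lands on \( X \), and this relies on the convention in Proposition \ref{prop:contract}, where the first tensor factor differentiates the operator. Second, Corollary \ref{cor:pullback} is stated with \( G \) as the codomain of the functionals, and here we need \( G=F \) rather than the trivial bundle. Its proof is insensitive to this choice, so the corollary applies as stated. Finally, we should confirm that the wedge version of Corollary \ref{cor:consistency} holds. Its proof transfers verbatim, because Proposition \ref{prop:leibniz} holds for \( \wedge(E) \) with the Leibniz-extended connection, as the paper notes just after that proposition.
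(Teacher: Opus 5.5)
Your proposal is correct and is essentially the paper's argument: the paper likewise treats \( \iota_X \) as the pullback of left multiplication \( Y\mapsto X\wedge Y \) and combines Corollary \ref{cor:pullback} with the exterior-algebra version of Corollary \ref{cor:consistency}, and you additionally spell out the application of Proposition \ref{prop:contract} that produces the Sweedler expansion, which the paper leaves implicit. Your worry about taking \( G=F \) is unnecessary, since Corollary \ref{cor:pullback} is already stated for an arbitrary third bundle \( G \) with connection.
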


The higher covariant derivative commutes with the (\( C^\i(M) \)-linear) co-product on the tensor, exterior, and symmetric algebras:
\begin{prop}\label{prop:covcoprod}
	If \( \a \) is a smooth section of either \( \otimes^\bullet(E) \), \( \wedge^\bullet(E) \) or \( S^\bullet(E) \), then
	\[
	\n_v^j(\D \a) = \D \n_v^j \a.
	\]
\end{prop}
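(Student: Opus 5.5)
The plan is to reduce the statement to the first-order case $j=1$ and then induct on $j$ using the recursion \eqref{eq:higher}. The key observation is that the co-product $\D$ on $\otimes(E)$, $\wedge(E)$ or $S(E)$ is a $C^\i(M)$-linear bundle map, i.e.\ a section $\D\in C^\i(Hom(A, A\otimes A))$ where $A$ is one of these bundles (restricted to a finite-dimensional piece of bounded degree). The claim is then equivalent to $\D$ being parallel to all orders: $\n^j_v \D = 0$ for $j\geq 1$. Indeed, by Proposition \ref{prop:contract},
\[
\n_v^j(\D\a) = \left(\n^{j-\bullet}_{v_{(1)}}\D\right)\left(\n^\bullet_{v_{(2)}}\a\right),
\]
and if all terms with $j-\bullet\geq 1$ vanish, only the term $v_{(1)}=1$, $v_{(2)}=v$ survives, giving $\D\n_v^j\a$ (using $\n^0_1=Id$).

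So first I will show $\n_X\D=0$, i.e.\ $\n_X(\D\a)=\D\n_X\a$ for a vector field $X$. Since both sides are derivations over $X$ in the $C^\i(M)$-module sense and $\D$ is $C^\i(M)$-linear, it suffices to check this locally on decomposable elements $\a=e_1\otimes\cdots\otimes e_k$ (or $e_1\wedge\cdots\wedge e_k$, $e_1\cdots e_k$). The co-product is a sum over shuffles (with signs in the exterior case) of splittings $(e_{\s(1)}\cdots e_{\s(i)})\otimes(e_{\s(i+1)}\cdots e_{\s(k)})$. The connection on $A\otimes A$ is the tensor product connection, and on each factor it acts by the Leibniz rule \eqref{connection2}. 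Applying $\n_X$ to each shuffle term therefore replaces exactly one $e_l$ by $\n_X e_l$, in whichever factor it sits. On the other side, $\n_X\a=\sum_l e_1\cdots\n_Xe_l\cdots e_k$, and $\D$ of each summand distributes the same slots by the same shuffles with the same signs, since $\n_X e_l$ has the same degree as $e_l$. Matching terms gives equality. This bookkeeping, especially keeping the signs consistent in the $\wedge$ case, is the only real check, and I expect it to be routine.

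Next I will pass to higher order. Rather than relying on parallelism alone, I will induct directly with \eqref{eq:higher}:
\[
\n^{j+1}_{X_0,\dots,X_j}(\D\a)=\n_{X_0}\n^j_{X_1,\dots,X_j}(\D\a)-\sum_k\n^j_{\dots,\hat\n_{X_0}X_k,\dots}(\D\a).
\]
By the inductive hypothesis (tensorial in the $X$'s, so it applies with $\hat\n_{X_0}X_k$ inserted), this becomes $\n_{X_0}\D\n^j\a-\sum_k\D\n^j_{\dots}\a$. The first-order case then gives $\D\n_{X_0}\n^j\a-\sum_k\D\n^j\a=\D\n^{j+1}\a$. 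The only subtle point is that $\n^j_v\a$ for general $v\in C^\i(\otimes^j TM)$ is defined by $C^\i(M)$-linear extension. Since both sides are $C^\i(M)$-linear in $v$, it suffices to treat $v$ decomposable, so the main (mild) obstacle remains the first-order sign check above.
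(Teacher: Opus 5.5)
Your proof is correct and complete. The paper itself gives no proof of this proposition: it is asserted without argument after Corollary~\ref{cor:interior} and then used to get associativity of $\odot$. So your proposal supplies a missing argument rather than retracing a written one.

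Your split is the natural one in the paper's framework. The only non-formal input is that $\D$ is parallel to first order, and your check of this is sound:
\begin{itemize}
\item the difference $\a\mapsto \n_X(\D\a)-\D\n_X\a$ is $C^\i(M)$-linear, so checking decomposables suffices;
\item the Leibniz extension of $\n_X$ replaces one factor at a time;
\item the shuffle signs in the $\wedge$ case depend only on the permutation;
\item the empty factors in the shuffle sum are handled by $\n_X 1=0$ on $\wedge^0(E)$, which you use implicitly.
\end{itemize}

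Either of your two higher-order arguments suffices on its own. Once $\n\D=0$, the definition $\n^{l}=\tilde{\n}\circ\n^{l-1}$ gives $\n^l\D=0$ for all $l\geq 1$, and Proposition~\ref{prop:contract} then leaves only the term with $v_{(1)}=1$. This is in the same spirit as the paper's proofs of Corollaries~\ref{cor:pullback} and~\ref{cor:consistency}. Your direct induction through \eqref{eq:higher} avoids Proposition~\ref{prop:contract} entirely. It actually proves the more general fact that any parallel bundle map commutes with every $\n^j_v$.

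A different route, closer to how the paper uses Proposition~\ref{prop:leibniz}, exploits that $\D$ is multiplicative with $\D e=e\otimes 1+1\otimes e$ on degree-one generators. The statement then reduces to degrees zero and one plus the higher-order Leibniz rule for the (Koszul-signed) product on $\wedge(E)\otimes\wedge(E)$. That trades your shuffle bookkeeping for sign bookkeeping in the product, so it buys little over your version.
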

 
\subsection{Commutation}
 
Just as the commutation relation \( \n_{X,Y}-\n_{Y,X} \) is a tensor (given that \( \hat{\n} \) is torsion-free,) so too are commutations of the even-order higher covariant derivatives:

\begin{prop}
	If the underlying connection \( \hat{\n} \) on \( TM \) is torsion-free, then \( \n^{2j}_{(v_1 w_1 - w_1 v_1)\cdots(v_j w_j - w_j v_j)} \) is a tensor for all \( j \). That is, if \( f\in C^\i(M) \) and \( \a \) is a section of \( E \), then 
	\[ 
	\n^{2j}_{(v_1 w_1 - w_1 v_1)\cdots(v_j w_j - w_j v_j)} (f\cdot \a) = f\cdot \n^{2j}_{(v_1 w_1 - w_1 v_1)\cdots(v_j w_j - w_j v_j)} \a.
	\]
Additionally, the value of the tensor \( \n^{2j}_{(v_1 w_1 - w_1 v_1)\cdots(v_j w_j - w_j v_j)} \) at \( p \) is completely determined by the value of the Riemann curvature tensors \( R^{TM} \) and \( R^E \) at \( p \).
\end{prop}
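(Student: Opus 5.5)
We argue by induction on \( j \), using Proposition \ref{prop:mega} to peel off one commutator factor at a time. Write \( c_i = v_i w_i - w_i v_i \in C^\i(\otimes^2(TM)) \) and \( C_j = c_1\cdots c_j \). The case \( j=1 \) is the observation made above: since \( \hat{\n} \) is torsion-free, \( \n^2_{v,w}-\n^2_{w,v} = R^E_{v,w} \) is tensorial in \( \a \), and its value at \( p \) is \( R^E_p \).

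For the inductive step, write \( C_{j+1} = c_1 \otimes C' \) with \( C' = c_2\cdots c_{j+1} \). Apply \eqref{eq:mega} with \( v=c_1 \) (order \( 2 \)) and \( w=C' \) (order \( 2j \)). In Sweedler notation, the tensor coproduct of \( c_1 \) has terms with \( c_{1(2)} \) of order \( 0 \), \( 1 \), or \( 2 \). This gives
\[ \n^2_{c_1}\circ \n^{2j}_{C'} = \n^{2j+2}_{c_1\otimes C'} + \big(\n^{2j+1}_{v_1\otimes \hat{\n}_{w_1}C'} + \n^{2j+1}_{w_1\otimes \hat{\n}_{v_1}C'} - \n^{2j+1}_{w_1\otimes\hat{\n}_{v_1}C'}-\n^{2j+1}_{v_1\otimes\hat{\n}_{w_1}C'}\big)\text{-type terms} + \n^{2j}_{\hat{\n}^2_{c_1}C'}. \]
The second-order term is \( \hat{\n}^2_{c_1}C' = R^{TM}_{v_1,w_1}C' \), which is tensorial in \( C' \). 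So
\[ \n^{2j+2}_{C_{j+1}} = \n^2_{c_1}\circ\n^{2j}_{C'} - (\text{first-order mixed terms}) - \n^{2j}_{R^{TM}_{v_1,w_1}C'}. \]
The first and last terms are handled by induction and the base case. The main obstacle is the middle terms, because the coproduct of \( c_1 \) pairs \( v_1 \) with \( w_1 \) in an order that does not obviously cancel.

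To handle this, I would first reduce to the case where the vector fields are parallel at \( p \). Tensoriality in \( f \) and the value at \( p \) are both pointwise statements. By tensoriality of \( \n^j \) in its lower argument (the \( i=0 \) case of Proposition \ref{prop:mega}), the operator \( \n^{2j}_{C_j} \) depends only on the pointwise value of \( C_j \). So we may assume \( v_i, w_i \) satisfy \( \hat{\n}v_i(p)=\hat{\n}w_i(p)=0 \), or better, are given by constant coefficients in normal coordinates. The middle terms then involve \( \hat{\n}_{w_1}C' \) and \( \hat{\n}_{v_1}C' \), which vanish at \( p \). Some care is still needed: we are composing operators, and \( \n^{2j+2}_{C_{j+1}} \) is \( C^\i(M) \)-linear in \( C_{j+1} \), so its value at \( p \) depends only on \( C_{j+1}(p) \). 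Since \( \n^{2j}_{C'} \) is already tensorial, \( \n^2_{c_1}\circ\n^{2j}_{C'} = R^{\mathrm{End}(E)}_{v_1,w_1}\) applied appropriately. Concretely, I would apply \( \n^2_{c_1} \) to \( \n^{2j}_{C'}\a \) using Proposition \ref{prop:contract}, viewing \( \n^{2j}_{C'} \) as a section \( T' \) of \( End(E) \). This gives \( \n^2_{c_1}(T'\a) = (R^{End E}_{v_1,w_1}T')\a + T'(R^E_{v_1,w_1}\a) \). The mixed first-order terms of Proposition \ref{prop:contract} cancel by the antisymmetry of \( c_1 \) combined with the symmetric coproduct.

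Thus the induction produces a formula for \( \n^{2j+2}_{C_{j+1}}\a \) in which every term is \( C^\i(M) \)-linear in \( \a \). Each term is built from \( R^E \), from \( R^{End E} \) (determined by \( R^E \)), and from \( R^{TM} \) acting on the tensor \( C' \), all applied to tensors inductively determined by the curvatures at \( p \). Together these give both claims. The remaining verification is that the Sweedler first-order terms really cancel, equivalently that \( \n^2_{v,w}-\n^2_{w,v} \) applied to a product \( T'\a \) has no cross terms. This is where torsion-freeness enters a second time, and I expect it to be the step requiring the most care.
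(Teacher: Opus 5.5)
Your skeleton is the paper's: induct on \(j\), apply Proposition \ref{prop:mega} with \(v=c_1\) and \(w=C'\), and read off
\[ \nabla^{2j+2}_{C_{j+1}} = R^E_{v_1,w_1}\circ \nabla^{2j}_{C'} - \nabla^{2j}_{R^{TM}_{v_1,w_1}C'}. \]
However, you have put the difficulty in the wrong place.

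\textbf{The middle terms are not an obstacle.} The four terms you display are literally \(A+B-B-A\). This is because a commutator of vectors is primitive for the tensor co-product: \(\Delta(ab-ba)=(ab-ba)\otimes 1+1\otimes(ab-ba)\), since the cross terms \(a\otimes b+b\otimes a\) are symmetric. Several consequences follow.
\begin{enumerate}
\item The reduction to fields parallel at \(p\) is valid but unnecessary.
\item The detour through the curvature of \(E^*\otimes E\) is also unnecessary. The identity \(\nabla^2_{c_1}\circ T'=R^E_{v_1,w_1}\circ T'\) is just the \(j=1\) case applied to the section \(T'\alpha\).
\item The ``remaining verification'' you defer to the end is immediate.
\item Torsion-freeness plays no role in this cancellation; it holds for any connection. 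Torsion-freeness is used only to identify \(\nabla^2_{c_1}\) with \(R^E_{v_1,w_1}\) and \(\hat{\nabla}^2_{c_1}\) with \(R^{TM}_{v_1,w_1}\).
\end{enumerate}

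\textbf{The last term does need an argument.} You pass over it with ``handled by induction'', but the inductive hypothesis only covers \(\nabla^{2j}\) indexed by \(j\)-fold products of commutators. For a general \(X\in C^\infty(\otimes^{2j}(TM))\), \(\nabla^{2j}_X\) is not tensorial; for example, \(\nabla^2_{a\otimes a}(f\alpha)\) contains \(2(\nabla_a f)\nabla_a\alpha\). The missing step is that \(R^{TM}_{v_1,w_1}\) acts on \(\otimes(TM)\) as a derivation, so that
\[ R^{TM}_{v_1,w_1}(ab-ba)=\big((Ra)b-b(Ra)\big)+\big(a(Rb)-(Rb)a\big) \]
is again a sum of commutators. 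Hence \(R^{TM}_{v_1,w_1}C'\) is a sum of \(j\)-fold products of commutators of vector fields. The values at \(p\) of those vector fields are determined by \(R^{TM}_p\) and the \(v_i(p),w_i(p)\). Only at this point do tensoriality and the pointwise dependence on \(R^{TM}_p\) and \(R^E_p\) follow by induction.

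With these two corrections, your argument is exactly the paper's proof.
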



\begin{proof}
	The proof is by induction on \( j \). The \( j=1 \) case being evident, we have by Proposition \ref{prop:mega},
	\[
	\n^{2(j+1)}_{(v_0 w_0 - w_0 v_0)\cdots(v_j w_j - w_j v_j)} = R_{v_0,w_0}^E \circ \n^{2j}_{(v_1 w_1 - w_1 v_1)\cdots(v_j w_j - w_j v_j)} - \n^{2j}_{R_{v_0,w_0}^{TM}\left[(v_1 w_1 - w_1 v_1)\cdots(v_j w_j - w_j v_j)\right]}.
	\]
	Since the Riemann curvature tensor \( R^{TM} \) extends to \( \otimes(TM) \) as a derivation, the second term on the right hand side is composed of terms that satisfy the inductive hypothesis, and the first term is similarly a tensor and a function of the Riemann curvature tensors \( R^{TM} \) and \( R^E \) by induction. 
\end{proof}

\begin{example}
	Consider \( j=2 \). The preceding proof shows \[ \n^4_{(a b - b a) (c d - d c)} = R_{a,b}^E \circ R_{c,d}^E - R^E_{R^{TM}_{a,b}(c\otimes d)}. \]
\end{example}

We have the following general formula for commutation of two individual factors:

\begin{lem}[Fundamental Commutation Lemma]\label{lem:fundamental}
	Suppose the underlying connection \( \hat{\n} \) on \( TM \) is torsion-free. If \( i,j\geq 0 \), \( u\in C^\i(\otimes^i (TM)) \), \( v\in C^\i(\otimes^j (TM) ) \), and \( a,b\in C^\i(TM) \), then
	\begin{equation}\label{eq:fundamental}
		\n_{u_{(1)}\hat{\n}_{u_{(2)}}((ab-ba)v)} = \left(\n_{u_{(1)}} R^E \right)_{\hat{\n}_{u_{(2)}}(ab)}\circ\n_{u_{(3)}\hat{\n}_{u_{(4)}} v} - \n_{u_{(1)}\left(\hat{\n}_{u_{(2)}} R^{TM}\right)_{\hat{\n}_{u_{(3)}}(ab)}(\hat{\n}_{u_{(4)}}v)}.
	\end{equation}
\end{lem}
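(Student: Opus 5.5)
Both sides of \eqref{eq:fundamental} are expansions of a composition via Proposition \ref{prop:mega}. So the plan is to apply $\n_u^i$ to the basic commutation identity for $\n_{(ab-ba)v}$ and expand both sides with the Leibniz-type rules proved above.

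\textbf{Step 1: the case $i=0$.} Take $u=1$. Proposition \ref{prop:mega} with first argument $ab-ba$ (degree $2$) and second argument $v$ gives
\[ \n^2_{ab-ba}\circ \n_v = \n_{(ab-ba)_{(1)}\hat{\n}_{(ab-ba)_{(2)}}v}. \]
Expand the tensor coproduct of $ab-ba$: it is $(ab-ba)\otimes 1 + (a\otimes b - b\otimes a) + (b\otimes a - a\otimes b) + 1\otimes(ab-ba)$. The two middle terms contribute $\n_{a\hat{\n}_b v}-\n_{b\hat{\n}_a v}+\n_{b\hat{\n}_a v}-\n_{a\hat{\n}_b v}=0$. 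The left side is $R^E_{a,b}\circ\n_v$ by the definition of $R^E$, and the last term is $\n_{\hat{\n}^2_{ab-ba}v}=\n_{R^{TM}_{a,b}v}$, where $R^{TM}$ acts on $\otimes(TM)$ as a derivation. Hence
\[ \n_{(ab-ba)v}=R^E_{ab}\circ\n_v-\n_{R^{TM}_{ab}v}, \]
which is \eqref{eq:fundamental} with $u=1$. Here $R_{ab}$ denotes the tensor $R$ evaluated on $a\otimes b$, using its antisymmetry.

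\textbf{Step 2: apply $\n_u$.} For general $u$, apply $\n^i_u\circ$ to both sides of the Step 1 identity, viewed as an identity of operators.
\begin{itemize}[leftmargin=*]
\item On the left, Proposition \ref{prop:mega} gives $\n_u\circ\n_{(ab-ba)v}=\n_{u_{(1)}\hat{\n}_{u_{(2)}}((ab-ba)v)}$, which is exactly the left side of \eqref{eq:fundamental}.
\item For the first right-hand term, treat $R^E_{ab}$ as the contraction of the section $R^E\in C^\i(Hom(\otimes^2TM\otimes E,E))$ with $ab$. Proposition \ref{prop:contract}, applied twice via coassociativity, gives
\[ \n_u(R^E_{ab}\,\b)=(\n_{u_{(1)}}R^E)_{\hat{\n}_{u_{(2)}}(ab)}(\n_{u_{(3)}}\b). \]
Take $\b=\n_v\a$ and apply Proposition \ref{prop:mega} again: $\n_{u_{(3)}}\circ\n_v=\n_{u_{(3)}\hat{\n}_{u_{(4)}}v}$. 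This yields the first term of \eqref{eq:fundamental}.
\item For the second term, Proposition \ref{prop:mega} gives $\n_{u_{(1)}\hat{\n}_{u_{(2)}}(R^{TM}_{ab}v)}$. Expanding $\hat{\n}_{u_{(2)}}$ of this triple contraction by Proposition \ref{prop:contract}, with Corollary \ref{cor:pullback} for the derivation extension of $R^{TM}$, gives $(\hat{\n}_{u_{(2)}}R^{TM})_{\hat{\n}_{u_{(3)}}(ab)}(\hat{\n}_{u_{(4)}}v)$.
\end{itemize}

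\textbf{Main obstacle.} The delicate step is the bookkeeping in Step 2. One must check that the covariant derivatives of $R^{TM}$, extended to $\otimes(TM)$ as a derivation, coincide with the derivation extension of $\hat{\n}^k R^{TM}$. This is where Corollary \ref{cor:consistency} and Corollary \ref{cor:pullback} enter. One must also verify that the Sweedler indices recombine by coassociativity into the four-fold coproduct shown in the statement.
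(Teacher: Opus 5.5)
Your proposal is correct and follows essentially the same route as the paper, which likewise writes the left side as \( \n_u\circ\n_{(ab-ba)v} \) via Proposition \ref{prop:mega}, substitutes \( R^E_{a,b}\circ\n_v-\n_{R^{TM}_{a,b}(v)} \), and expands with Propositions \ref{prop:leibniz} and \ref{prop:contract}. Your Step 1 spells out the \( i=0 \) identity, including the cancellation of the mixed terms in \( \D(ab-ba) \), which the paper uses without derivation; the compatibility of the derivation extension of \( R^{TM} \) with covariant differentiation that you flag is also left implicit in the paper.
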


\begin{proof}
	By Propositions \ref{prop:mega}, \ref{prop:leibniz}, and \ref{prop:contract},
		\begin{align*}
			\n_{u_{(1)}\hat{\n}_{u_{(2)}}((ab-ba)v)} &= \n_u \circ \n_{(ab-ba)v}\\
			&=\n_u\circ (R^E_{a,b}\circ \n_v - \n_{R^{TM}_{a,b}(v)}) \\
			&=\left(\n_{u_{(1)}} (R^E_{a,b}) \right)\circ\n_{u_{(2)}}\circ \n_v - \n_{u_{(1)}\hat{\n}_{u_{(2)}}\left(R^{TM}_{ab}v\right)}\\
			&=\left(\n_{u_{(1)}} R^E \right)_{\hat{\n}_{u_{(2)}}(ab)}\circ\n_{u_{(3)}\hat{\n}_{u_{(4)}}v} - \n_{u_{(1)}\left(\hat{\n}_{u_{(2)}} R^{TM}\right)_{\hat{\n}_{u_{(3)}}(ab)}(\hat{\n}_{u_{(4)}}v)}.
		\end{align*}
\end{proof}
The left hand side of \eqref{eq:fundamental} consists of \( \n_{u(ab-ba)v}^{i+2+j} \) and higher covariant derivatives of order \( < i+2+j \), and by moving these lower-order terms to the right hand side, this gives a formula for \( \n_{u(ab-ba)v}^{i+2+j} \) in terms of lower-order covariant derivatives and the Riemannian curvature tensors \( R^E \) and \( R^{TM} \). Note that since \( \n_{u(ab-ba)v}^{i+2+j} \) is tensorial in \( u \), \( v \), \( a \) and \( b \), therefore so too must be the sum of the rest of the terms in \eqref{eq:fundamental}. 

\begin{remark}
	A word of caution: The trivial bundle \( M\times \R \) has a unique covariant derivative, given by \( \n_X f = X(f) \). The second covariant derivative satisfies \( \n^2_{X,Y}f = \n^2_{Y,X}f \) if the underlying connection \( \hat{\n} \) on \( TM \) is torsion-free. That is, \( R^{M\times \R}=0 \). However, by Lemma \ref{lem:fundamental}, \[ \n^3_{X,Y,Z}f-\n^3_{Y,X,Z}f = -(R^{TM}_{X,Y}Z)(f), \] which is not generally zero.
\end{remark}

\subsection{Covariant Product}

The composition formula in Proposition \ref{prop:mega} motivates the definition of a \emph{covariant product} on\footnote{By \( C^\i(\otimes(TM)) \) we mean the space of possibly non-homogeneous but globally finite-order smooth contravariant tensor fields on \( M \).} \( C^\i(\otimes(TM)) \):
\begin{defn}
	Let \( \odot \) be the map
	\begin{align*}
		\odot: C^\i(\otimes(TM))\otimes_\R C^\i(\otimes(TM)) &\to C^\i(\otimes(TM))\\
		a\otimes_{\R} b \mapsto a_{(1)}\otimes_{C^\i(M)} \n_{a_{(2)}} b.
	\end{align*}
	where \( a_{(1)}\otimes_{C^\i(M)} \n_{a_{(2)}} b \) is identified with an element of \( C^\i(\otimes(TM)) \) by the natural isomorphism \[ C^\i(X\otimes Y)\simeq C^\i(X)\otimes_{C^\i(M)} C^\i(Y) \] for vector bundles \( X \) and \( Y \) over \( M \).
\end{defn}

\begin{example}
	Suppose \( \hat{\n} \) is torsion-free. Given smooth vector fields \( V \) and \( W \) on \( M \), \[ V\odot W - W\odot V = V\otimes W - W\otimes V  + [V,W]. \]
\end{example}

Proposition \ref{prop:covcoprod} implies that \( \odot \) turns \( A:=C^\i(\otimes(TM)) \) into a unital, associative filtered \( \R \)-algebra.

Given a bundle \( E\to M \) with connection \( \n \), Proposition \ref{prop:mega} implies that the higher covariant derivative \( \n^\bullet \) on \( E \) is a filtered algebra map from \( A \) to the filtered algebra of differential operators \( C^\i(E)\to C^\i(E) \). In particular,
\begin{prop}
	The space \( C^\i(E) \) is a left module over \( A \).
\end{prop}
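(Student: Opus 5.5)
We will define the action by \( X\cdot\a := \n^\bullet_X \a \) for \( X\in A=C^\i(\otimes(TM)) \) and \( \a\in C^\i(E) \), extended \( \R \)-linearly over the homogeneous components of \( X \). Here we use the convention \( \n^0_f\a=f\cdot\a \). Since every element of \( A \) has globally finite order, this is a finite sum, so it is a well-defined element of \( End_\R(C^\i(E)) \). Two axioms must be checked. The first is unitality, \( 1\cdot \a=\n^0_1\a=\a \), which is immediate from the convention. The second is compatibility with the product, namely \( (X\odot Y)\cdot\a = X\cdot(Y\cdot \a) \).

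For compatibility, both sides are \( \R \)-bilinear in \( (X,Y) \), so it suffices to take \( X=v\in C^\i(\otimes^i(TM)) \) and \( Y=w\in C^\i(\otimes^j(TM)) \) homogeneous. By the definition of \( \odot \),
\[ v\odot w = v_{(1)}\otimes_{C^\i(M)}\hat{\n}^\bullet_{v_{(2)}}w, \]
and this has degree \( i+j-\bullet \) in the summand where \( v_{(2)} \) has degree \( \bullet \). Applying \( \n \) and using linearity of \( \n^\bullet_{(\cdot)} \) over these summands gives
\[ (v\odot w)\cdot \a = \n^{i+j-\bullet}_{v_{(1)}\otimes\hat{\n}^\bullet_{v_{(2)}}w}\a, \]
which by Proposition \ref{prop:mega} equals \( \n^i_v\n^j_w\a = v\cdot(w\cdot\a) \).

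The main point to check is that the Sweedler expression is meaningful when the tensor is taken over \( C^\i(M) \). Concretely, \( \n^k_u \) must depend only on \( u \) as a section of \( \otimes^k(TM) \) and not on a chosen decomposition into simple tensors. This is exactly the tensoriality of \( \n^k \) (the \( i=0 \) case in the proof of Proposition \ref{prop:mega}). Likewise, \( v_{(1)}\otimes\hat{\n}_{v_{(2)}}w \) must be independent of the representative of \( \D v \) in \( C^\i(\otimes TM)\otimes_{C^\i(M)}C^\i(\otimes TM) \). This follows because both factors are \( C^\i(M) \)-linear in the respective tensor slots, with \( \hat{\n}_{v_{(2)}} \) tensorial in \( v_{(2)} \).

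The filtration statement is not needed for the module claim itself, but it holds: order \( \le r \) elements act by differential operators of order \( \le r \).
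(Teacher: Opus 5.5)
Your proof is correct and follows the same route as the paper, which deduces the module structure directly from Proposition \ref{prop:mega}: that proposition says \( X\mapsto \n^\bullet_X \) turns \( \odot \) into composition, so it is a unital filtered algebra map from \( A \) to differential operators on \( C^\i(E) \). Your checks of unitality and of well-definedness over \( C^\i(M) \) fill in details the paper leaves implicit.
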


Proposition \ref{prop:covcoprod} also implies that \( \odot \) is compatible with the tensor co-product \( \D \), in the sense that the diagram
	\[\begin{tikzcd}
		{A\otimes A} & A & {A\otimes A} \\
		{A\otimes A\otimes A \otimes A} && {A\otimes A\otimes A\otimes A}
		\arrow["\odot", from=1-1, to=1-2]
		\arrow["{\D \otimes \D}"', from=1-1, to=2-1]
		\arrow["\D", from=1-2, to=1-3]
		\arrow["{id\otimes \tau \otimes id}"', from=2-1, to=2-3]
		\arrow["{\odot \otimes \odot}"', from=2-3, to=1-3]
	\end{tikzcd}\]
	commutes. This nearly turns \( A \) into a bi-algebra, except \( \D \) acts over \( C^\i(M) \) and \( \odot \) is only \( C^\i(M) \)-linear in its first component. 
	
	Moreover, if \( E \) is any one of \( \otimes(F) \), \( \wedge(F) \) or \( S(F) \) for a smooth vector bundle \( F\to M \), then \( \n_v 1 = \e(v) 1 \) and \( \n_v(\a\b) = (\n_{v_{(1)}}\a)(\n_{v_{(2)}}\b) \) (by Proposition \ref{prop:leibniz},) for \( \a,\b\in C^\i(E) \) and \( v\in A \). Thus, if \( A \) were in fact a Hopf algebra, then \( C^\i(E) \) would be a (left) Hopf \( A \)-module algebra. Nevertheless, we will show later in Section \ref{section:combined} that the above properties are enough to define a \( C^\i(M) \) version of the Hopf-algebraic smash product \( \C^\i(E)\sharp A \). That is,
	\begin{thm}
		The space \( B:=A\otimes_{C^\i(M)} C^\i(\wedge(F)) \), equipped with the product \[ (v\otimes \a) \sharp (w\otimes \b) =  (w_{(1)}\odot v)\otimes (\n_{w_{(2)}}\a)\wedge \b, \] is a unital associative \( \R \)-algebra, and the space \( C^\i(\wedge(F)) \) is a left module over \( B \), where the action is given by composition of wedge product and higher covariant derivative, \( \wedge\circ \n^\bullet \).
\end{thm}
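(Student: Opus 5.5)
The plan is to verify everything directly from the algebraic identities of Section \ref{sec:covdiv}: associativity of \( \odot \), the algebra-map property of \( \n^\bullet \) (Proposition \ref{prop:mega}), the Leibniz rule (Propositions \ref{prop:leibniz} and \ref{prop4}), and the compatibility of \( \odot \) with \( \D \) (the commuting diagram above). Since \( B \) may not act faithfully on \( C^\i(\wedge(F)) \), I will not try to deduce associativity of \( \sharp \) from the module structure; associativity has to be checked by hand.

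One fact will be used throughout. The tensor co-product on \( \otimes(TM) \) makes vectors primitive, so it is cocommutative. Hence the Sweedler factors in every formula below may be permuted freely.

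\textbf{Step 1 (well-definedness over \( C^\i(M) \)).} \( B \) is a tensor product over \( C^\i(M) \), so I must show that \( \sharp \) respects the relations \( fv\otimes\a=v\otimes f\a \) and \( fw\otimes \b = w\otimes f\b \).

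\emph{Second slot.} This is easy. The co-product is \( C^\i(M) \)-linear, and \( \odot \) is \( C^\i(M) \)-linear in its first argument. So \( (fw)_{(1)}\odot v\otimes \n_{w_{(2)}}\a\wedge\b \) equals \( w_{(1)}\odot v\otimes \n_{w_{(2)}}\a\wedge f\b \).

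\emph{First slot.} This is the delicate part. I expand \( w_{(1)}\odot(fv) = w_{(1)}\otimes \hat\n_{w_{(2)}}(fv) \). Then I apply the Leibniz rule (Proposition \ref{prop:leibniz} with \( E \) the trivial bundle) to write \( \hat\n_{w_{(2)}}(fv) = (\n_{w_{(2)}}f)\,\hat\n_{w_{(3)}}v \). I compare this with \( \n_{w_{(2)}}(f\a)=(\n_{w_{(2)}}f)\n_{w_{(3)}}\a \). Coassociativity and cocommutativity then let the scalar \( \n_{w_{(i)}}f \) be moved across the \( \otimes_{C^\i(M)} \). I expect this step to be the main obstacle. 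One has to be careful that the function \( \n_{w_{(i)}} f \) sits in a slot where it may legitimately pass through \( \otimes_{C^\i(M)} \). That requires the identity \( (w_{(1)}\odot v)\,g = w_{(1)}\odot(gv) \) only in the combined form after summing Sweedler terms, so the bookkeeping of which factor carries the derivative of \( f \) must be done explicitly.

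\textbf{Step 2 (unit and associativity).} The unit is \( 1\otimes 1 \), using \( \e(w)=w_{(1)}\e(w_{(2)}) \) and \( \n_1=Id \).

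For associativity, I expand both \( ((u\otimes\a)\sharp(v\otimes\b))\sharp(w\otimes\g) \) and \( (u\otimes\a)\sharp((v\otimes\b)\sharp(w\otimes\g)) \).

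\emph{Left-hand bracketing.} This yields the term \( w_{(1)}\odot(v_{(1)}\odot u) \) together with \( \n_{w_{(2)}}\big((\n_{v_{(2)}}\a)\wedge\b\big)\wedge\g \). I then apply Proposition \ref{prop4} to distribute \( \n_{w_{(2)}} \) over the wedge. Finally, I use Proposition \ref{prop:mega} in the form \( \n_{w}\n_{v}=\n_{w\odot v} \).

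\emph{Right-hand bracketing.} This yields \( (w_{(1)}\odot v)_{(1)}\odot u \) and \( \n_{(w_{(1)}\odot v)_{(2)}}\a\wedge\n_{w_{(2)}}\b\wedge\g \). By the co-product compatibility diagram, \( \D(w\odot v)=(w_{(1)}\odot v_{(1)})\otimes(w_{(2)}\odot v_{(2)}) \).

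\emph{Matching.} After this substitution, associativity of \( \odot \) and coassociativity/cocommutativity match the two sides term by term.

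\textbf{Step 3 (module structure).} I define the action by \( \rho(v\otimes\a)\o=\a\wedge\n_v\o \). It is well defined over \( C^\i(M) \) because \( \n^\bullet \) is tensorial in its subscript. To check compatibility with \( \sharp \), I compute \( \rho(x)\rho(y) \) using Proposition \ref{prop4} and Proposition \ref{prop:mega}, and compare with \( \rho \) of the appropriate product. Because of the ordering convention in the stated formula, this may be a right action, or a left action of the opposite algebra. Cocommutativity shows that the resulting expression coincides with \( \rho \) applied to the \( \sharp \)-product, up to this convention. The unit acts as the identity since \( \n_1=Id \).
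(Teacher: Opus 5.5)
The paper does not actually prove this statement. Section \ref{section:combined} restates it as a proposition without proof. The only supporting structure it offers is the lifted operator \( \mathbb{DE}_{v\boxtimes\a}=\mathbb{D}_v\circ\mathbb{E}_\a \). By \eqref{eq:comme}, \eqref{eq:commd} and \eqref{eq:comm1}, this operator satisfies \( \mathbb{DE}_x\circ\mathbb{DE}_y=\mathbb{DE}_{x\sharp y} \) on \( \otimes(TM)\boxtimes\wedge(E) \). Dually, \( \o\mapsto\iota_\a\n_v\o \) is a \emph{right} action on forms. So your direct approach is the only actual argument on the table.

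\textbf{Steps 1 and 2 are sound.} In Step 1, the two sides expand to
\[ w_{(1)}\otimes(\n_{w_{(2)}}f)\,\hat\n_{w_{(3)}}v\otimes(\n_{w_{(4)}}\a)\wedge\b \quad\text{and}\quad w_{(1)}\otimes\hat\n_{w_{(2)}}v\otimes(\n_{w_{(3)}}f)(\n_{w_{(4)}}\a)\wedge\b . \]
These agree after transposing the second and third Sweedler factors. Every map you apply to the iterated coproduct is \( C^\i(M) \)-balanced, because each \( \n_{w_{(i)}} \) is tensorial in its subscript. So the bookkeeping you worry about is harmless, and no termwise identity like \( (w_{(1)}\odot v)g=w_{(1)}\odot(gv) \) is needed. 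Your associativity argument, via \( \D(w\odot v)=(w_{(1)}\odot v_{(1)})\otimes(w_{(2)}\odot v_{(2)}) \) and associativity of \( \odot \), is correct.

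\textbf{The gap is Step 3}, which you leave at ``up to this convention.'' Carry out the computation with your \( \rho(v\otimes\a)\o=\a\wedge\n_v\o \). For homogeneous \( \a,\b \) you get
\[ \rho(w\otimes\b)\circ\rho(v\otimes\a)=(-1)^{|\a||\b|}\,\rho\big((v\otimes\a)\sharp(w\otimes\b)\big), \]
while \( \rho(x)\circ\rho(y)\neq\rho(x\sharp y) \) in general. Two test cases show this:
\begin{itemize}
\item For \( x=X\otimes 1 \) and \( y=1\otimes\b \), one gets \( \rho(x)\rho(y)\o-\rho(x\sharp y)\o=(\n_X\b)\wedge\o \).
\item For \( x=1\otimes\a \) and \( y=1\otimes\b \) with \( \a,\b \) odd, one gets \( \rho(y)\rho(x)=-\rho(x\sharp y) \).
\end{itemize}
So \( \rho \) is neither a left nor a right action of \( (B,\sharp) \). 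The discrepancy is a Koszul sign, not merely an ordering convention, and cocommutativity cannot remove it.

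\textbf{The repair is to wedge on the other side.} Define \( \sigma(v\otimes\a)\o:=(\n_v\o)\wedge\a \). This is well defined over \( C^\i(M) \), and Propositions \ref{prop:leibniz} and \ref{prop:mega} give directly, with no cocommutativity needed,
\[ \sigma(x)\circ\sigma(y)=\sigma(y\sharp x),\qquad \sigma(1\otimes 1)=Id . \]
Hence \( C^\i(\wedge(F)) \) is a right \( B \)-module, equivalently a left module over the opposite algebra. The literal ``left module'' claim is therefore false for \( \sharp \) as written.

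Alternatively, \( \rho \) is an honest left action of the standard smash product \( (v\otimes\a)\cdot(w\otimes\b)=(v_{(2)}\odot w)\otimes\a\wedge\n_{v_{(1)}}\b \). By cocommutativity this is the Koszul-twisted opposite of \( \sharp \): \( y\sharp x=(-1)^{|\a||\b|}x\cdot y \). You should state and prove one of these corrected versions rather than leave the convention open.
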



\subsection{Christoffel symbols}
Higher-order Christoffel symbols can be defined as follows: Let \( (e^i)_{i=1}^n \) be coordinate functions on a neighborhood in \( M \). Let \( I \) be a word with letters in \( \{1,\dots, n\} \), and let \( e_I \) be the coordinate tangent tensor field corresponding to \( I \).
\begin{defn}
	The higher-order Christoffel symbols \( \G_{\ I,j}^k \) for \( j, k\in \{1,\dots, n\} \) are functions defined by the formula 
	\begin{equation}\label{eq:christoffel}
		\hat{\n}_{e_I}^{|I|}e_j=\G_{\ I,j}^k e_k.
	\end{equation}
\end{defn}
Using \eqref{eq:higher}, we have the following inductive formula for the higher-order Christoffel symbols:
\begin{equation}
	\G_{\ I,j}^k=\frac{\p \G_{\ I',j}^k}{\p e^{i_1}}+\G_{\ I',j}^l \cdot\G_{\ i_1,l}^k - \G_{\ i_1,i_2}^l\cdot\G_{\ I_l^1,j}^k - \cdots - \G_{\ i_1,i_s}^l\cdot\G_{\ I_l^s,j}^k,
\end{equation}
where \( I=(i_1,\dots,i_s), \) \( I'=(i_2,\dots,i_s) \), and \( I_l^r=(i_2,\dots,l,\dots,i_s) \), where the \( l \) occurs in the \( r \)-th positon. 

In particular, if \( \hat{\n} \) is torsion-free, then \( R^k_{\ juv}=\G^k_{\ uv,j}-\G^k_{\ vu,j} \).

The following two lemmas will be useful later, and since the second involves the higher-order Christoffel symbols and builds on the first, we include them here:

\begin{lem}
	\label{lem:monomial}
	Let \( p\in M \) and let \( (e^i)_{i=1}^n \) be coordinate functions on a neighborhood of \( p \). Suppose \( S \) is a word with letters in \( \{1,\dots n\} \) and \( T \) is an \( n \)-dimensional multi-index. If \( |S|\leq |T| \), then
	\begin{equation*}
		\left[ \hat{\n}_{e_S}^{|S|}\frac{(e-p)^T}{T!} \right]_p = \d_{\langle S\rangle,T},
	\end{equation*}
	where \( (e-p)^T \) is the monomial centered at \( p \) with variables in \( (e^i)_{i=1}^n \) whose degrees are determined by \( T \), the quantity \( T! \) is the product of the factorials of the components of \( T \), the symbol \( \d \) is the Kronecker delta, and \( \langle S\rangle \) is the \( n \)-dimensional multi-index counting the number of occurrences of each letter in \( S \). The quantities \( |S| \) and \( |T| \) are the length of the word \( S \) and the sum of the entries of \( T \), respectively (so that \( |\langle S\rangle|=|S| \).)
	
	More generally, if in addition \( \a\in C^\i(E) \), then 
	\begin{equation*}
		\left[ \n_{e_S}^{|S|}\frac{(e-p)^T\cdot \a}{T!} \right]_p = \d_{\langle{S}\rangle,T}\cdot \a_p.
	\end{equation*}
\end{lem}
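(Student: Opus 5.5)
Fix the coordinate neighbourhood and write \( S=(s_1,\dots,s_m) \) with \( m=|S|\le|T| \). The whole argument rests on one structural fact. The higher covariant derivative \( \hat{\n}^{m}_{e_S} f \) of a function \( f \) is the iterated partial derivative \( \p_{s_1}\cdots\p_{s_m} f \) plus a finite sum of terms of the form (smooth coefficient) \( \times \p^{J}f \) with \( |J|<m \). The coefficients are polynomials in the Christoffel symbols and their derivatives.

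I would prove this by induction on \( m \) using \eqref{eq:higher}. Suppose \( \n^{m-1}_{e_{S'}} \), with \( S'=(s_2,\dots,s_m) \), has this form with leading part \( \p_{s_2}\cdots\p_{s_m} \). Then \( \n_{e_{s_1}}\circ\n^{m-1}_{e_{S'}} \) has leading part \( \p_{s_1}\cdots\p_{s_m} \), and differentiating the coefficients only adds terms of order \( \le m-1 \). Each correction term \( \n^{m-1}_{e_{s_2},\dots,\hat{\n}_{e_{s_1}}e_{s_k},\dots} \) is, by tensoriality (the \( i=0 \) case of Proposition \ref{prop:mega}), a \( C^\i \)-combination \( \G^l_{s_1 s_k}\,\n^{m-1}_{\dots,e_l,\dots} \). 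By induction it has order \( \le m-1 \).

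The scalar claim then follows by evaluating at \( p \). For the leading term, \( \p^{\langle S\rangle}\big((e-p)^T/T!\big)(p)=\d_{\langle S\rangle,T} \) because partials commute; this holds for every \( T \). For the lower terms, \( \p^J\big((e-p)^T\big)(p)=0 \) whenever \( |J|<|T| \), since a monomial of degree \( |T| \) centered at \( p \) vanishes at \( p \) after fewer than \( |T| \) derivatives. Here \( |J|<m\le|T| \), so every lower term vanishes. This is exactly where the hypothesis \( |S|\le|T| \) enters.

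For the version with \( \a\in C^\i(E) \), I would write \( (e-p)^T\a/T! \) as \( f\otimes_{C^\i(M)}\a \), tensoring \( M\times\R \) with \( E \). Proposition \ref{prop:leibniz} then gives
\[ \n^{m}_{e_S}(f\a)=\big(\n^{m-\bullet}_{(e_S)_{(1)}}f\big)\,\n^{\bullet}_{(e_S)_{(2)}}\a , \]
where the tensor coproduct splits \( e_S \) into complementary subwords \( S_1\sqcup S_2 \) in every way. Consider any term with \( |S_2|\ge1 \). Then \( |S_1|<|S|\le|T| \), and by the structural fact \( \n^{|S_1|}_{e_{S_1}}f \) at \( p \) is a combination of \( \p^J f(p) \) with \( |J|\le|S_1|<|T| \), so it vanishes. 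Only the term with \( S_1=S \) survives, and it contributes \( \d_{\langle S\rangle,T}\,\a_p \) by the scalar case.

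The main thing to check carefully is the order bound in the structural fact, specifically that the correction terms in \eqref{eq:higher} have order \( \le m-1 \) in \( f \). Tensoriality and induction settle this; the rest is bookkeeping.
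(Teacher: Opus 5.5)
Your argument is correct, but for the scalar identity it takes a different route from the paper. The paper factors \( (e-p)^T \) into \( |T| \) linear functions \( e^{t}-p^{t} \) and applies the Leibniz rule (Proposition \ref{prop:leibniz}, iterated) to this product of \( |T| \) functions, with \( \alpha \) as one more factor in the second statement. A linear factor survives evaluation at \( p \) only if it receives at least one of the \( |S|\le|T| \) vectors. By pigeonhole, the surviving terms give each linear factor exactly one vector and give \( \alpha \) none. Each such factor contributes \( e_s(e^t)=\delta_{s,t} \), and the matching assignments number \( T! \) exactly when \( \langle S\rangle=T \). You instead show, by induction from \eqref{eq:higher} and tensoriality, that on functions \( \hat\nabla^m_{e_S}=\partial^{\langle S\rangle}+(\text{terms of order}<m) \) in coordinates. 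You then use that all partials of \( (e-p)^T \) of order \( <|T| \) vanish at \( p \), and Leibniz enters only once, to split off \( \alpha \). The paper's route is shorter and needs nothing about \( \nabla^m \) beyond the Leibniz rule and first derivatives of coordinate functions. Yours costs an extra induction but proves more. It gives \( [\hat\nabla^{|S|}_{e_S}f]_p=[\partial^{\langle S\rangle}f]_p \) for every \( f \) whose partials of order \( <|S| \) vanish at \( p \); that is, the principal part of \( \nabla^m_v \) is the image of \( v \) in \( S^m(T_pM) \). It also makes visible that the hypothesis \( |S|\le|T| \) is exactly what kills the lower-order terms built from the Christoffel symbols.
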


\begin{proof}
	This follows from Proposition \ref{prop:leibniz} and the pigeonhole principle, treating the monomial \( (e-p)^T \) as the tensor product of \( |T| \) separate functions.
\end{proof}

The assumption that \( |S|\leq |T| \) can be dropped if the higher-order Christoffel symbols are zero at \( p \):

\begin{lem}
	\label{lem:euclidean}
	Let \( (e^i)_{i=1}^n \) be coordinates about \( p\in M \) and suppose \( \G_{\ I,j}^k(p)=0 \) for all \( |I|\geq 1 \), \( j,k\in \{1,\dots,n\} \). If \( s\geq 0 \), then
	\begin{enumerate}
		\item\label{item:zero} \( \left[\hat{\n}^s\G_{\ I,j}^k\right]_p=0 \) for all \( |I|\geq 1 \), \( j,k\in \{1,\dots,n\} \); and
		\item\label{item:nr} \( \left[\hat{\n}^s R^{TM}\right]_p=0 \).
	\end{enumerate}
	Furthermore, if \( S=i_1\cdots i_s \) is a word with letters in \( \{1,\dots, n\} \), then:
	\begin{enumerate}
		\setcounter{enumi}{2}
		\item\label{item:comp} \( \left[\n_{e_S}^s \a\right]_p = \left[\n_{e_{i_1}}\circ\dots\circ \n_{e_{i_s}}\a\right]_p \) for all \( \a\in C^\i(E) \); and
		\item\label{item:monomial} If \( T \) is an \( n \)-dimensional multi-index, then (in the notation of Lemma \ref{lem:monomial},) \[ \left[ \hat{\n}_{e_S}^{s}\frac{(e-p)^T}{T!} \right]_p = \d_{\langle{S}\rangle,T}. \] More generally, if in addition \( \a\in C^\i(E) \), then 
	\begin{equation*}
		\left[ \n_{e_S}^{s}\frac{(e-p)^T\cdot \a}{T!} \right]_p = \d_{\langle{S_{(1)}}\rangle,T}\cdot \left(\n_{e_{S_{(2)}}}^{\left|S_{(2)}\right|}\a\right)_p.
	\end{equation*}
	\end{enumerate}
\end{lem}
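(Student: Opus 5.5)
Throughout, I will use that the hypothesis $\G^k_{\ I,j}(p)=0$ for all words $|I|\ge 1$ says precisely that every higher covariant derivative $\hat{\n}^{|I|}_{e_I} e_j$ vanishes at $p$. By the Leibniz rule (Proposition \ref{prop:leibniz}) and the fact that covariant derivatives commute with contraction (Proposition \ref{prop:contract}), the same holds for the coordinate coframe $e^k$ and for every coordinate tensor field $e_J\otimes e^K$. The four items should follow in the order \ref{item:zero}, \ref{item:nr}, \ref{item:comp}, \ref{item:monomial}.

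For \ref{item:zero}, I will write $\G^k_{\ I,j}=e^k(\hat{\n}^{|I|}_{e_I}e_j)$ and compute $\hat{\n}^s_{e_S}$ of this function. By Proposition \ref{prop:contract} it equals $(\hat{\n}^{s-\bullet}_{e_{S_{(1)}}} e^k)(\hat{\n}^\bullet_{e_{S_{(2)}}}\hat{\n}^{|I|}_{e_I}e_j)$. The second factor I rewrite with Proposition \ref{prop:mega}: $\hat{\n}_{e_{S_{(2)}}}\circ\hat{\n}_{e_I}$ is a sum of terms $\hat{\n}_{u\otimes\hat{\n}_{w}e_I}$, where $u\otimes w$ runs over the coproduct of $e_{S_{(2)}}$. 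Whenever $w$ is nonempty, $\hat{\n}_w e_I$ vanishes at $p$, and since higher covariant derivatives are tensorial in their subscript, such terms vanish at $p$. The surviving term is $\hat{\n}^{|S_{(2)}|+|I|}_{e_{S_{(2)}}e_I}e_j$ with $|S_{(2)}|+|I|\ge1$, which vanishes at $p$ by hypothesis. Hence every term vanishes at $p$.

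For \ref{item:nr}, I will write $R^k_{\ juv}=\G^k_{\ uv,j}-\G^k_{\ vu,j}$ as a coordinate expression and apply $\hat{\n}^s$ via the Leibniz rule. Every term then contains either some $\hat{\n}^{t}\G$, which vanishes at $p$ by \ref{item:zero}, or derivatives of coordinate tensors $e_J\otimes e^K$ multiplying $\G$ itself, which vanishes at $p$. The one place needing care is that the covariant derivative of a function is the full $\hat{\n}^t$, so I apply \ref{item:zero} with arbitrary $S$.

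For \ref{item:comp}, I will expand $\n^s_{e_S}$ using the recursion \eqref{eq:higher}, or equivalently Proposition \ref{prop:mega} iterated: $\n_{e_{i_1}}\circ\cdots\circ\n_{e_{i_s}}=\n^s_{e_S}+\sum(\text{lower-order terms})$, where each lower-order term has subscript of the form $u\otimes\hat{\n}_w(\cdots)$ with $w$ nonempty, applied to coordinate fields. I will argue inductively on $s$ that these subscripts vanish at $p$, since they are higher covariant derivatives of coordinate tensor fields. Tensoriality in the subscript then kills them at $p$. This is the main obstacle: the subscripts involve derivatives of $\hat{\n}_{e_{i_k}}e_{i_l}$-type tensors that are themselves nested. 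I expect to handle this by the formula $e_{i_1}\odot\cdots\odot e_{i_s}=e_S+(\text{terms built from }\hat{\n}^{\ge1}\text{ of coordinate tensors})$, proved by induction on $s$ using associativity of $\odot$, and then to evaluate at $p$ using \ref{item:zero}.

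For \ref{item:monomial}, I will apply Proposition \ref{prop:leibniz} to $\frac{(e-p)^T}{T!}\cdot\a$, which gives $\bigl[\hat{\n}_{e_{S_{(1)}}}\tfrac{(e-p)^T}{T!}\bigr]_p\bigl(\n_{e_{S_{(2)}}}\a\bigr)_p$. It therefore suffices to prove the scalar statement. For that, \ref{item:comp} turns $\hat{\n}^s_{e_S}$ at $p$ into iterated coordinate partial derivatives $\p_{i_1}\cdots\p_{i_s}$, and $\p^{\langle S\rangle}(e-p)^T/T!$ at $p$ equals $\d_{\langle S\rangle,T}$ by elementary calculus. This removes the restriction $|S|\le|T|$ present in Lemma \ref{lem:monomial}.
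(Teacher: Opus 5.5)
Your proposal is correct. For (a) and (d) it is essentially the paper's argument; the only difference is that the paper uses linear independence of the $e_k$ where you contract with $e^k$. For (b) and (c) you take a different route, and you prove them in a different order.

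\textbf{The paper's route.} It proves (c) first, by a double induction:
\begin{itemize}
\item it peels off one first-order derivative at a time via \eqref{eq:higher};
\item it uses the outer inductive hypothesis to turn $\nabla_{e_{i_1}}\circ\cdots\circ\nabla_{e_{i_{j-1}}}$ back into $\nabla^{j-1}$;
\item it kills the correction terms $\nabla^{j-1}(\Gamma\cdot\nabla^{s-j}\alpha)$ at $p$ with the Leibniz rule and (a).
\end{itemize}
It then proves (b) by induction on $s$. There it uses (c) and $R^{M\times\mathbb{R}}=0$ to dispose of $(\hat\nabla^{s+2}_{Se_ie_j}-\hat\nabla^{s+2}_{Se_je_i})(v^k)$.

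\textbf{Your (b).} This is shorter. Since $R^k_{\ juv}=\Gamma^k_{\ uv,j}-\Gamma^k_{\ vu,j}$, every Leibniz term contains some $\hat\nabla^t$ of a higher Christoffel symbol. So (a) alone suffices and (c) is not needed.

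\textbf{Your (c).} You work on the symbol side rather than the operator side. By Proposition \ref{prop:mega}, $\nabla_{e_{i_1}}\circ\cdots\circ\nabla_{e_{i_s}}=\nabla_{X_S}$ with $X_S=e_{i_1}\odot(e_{i_2}\odot(\cdots\odot e_{i_s}))$. By tensoriality it then suffices to show $X_S(p)=e_S(p)$, a statement that involves neither $E$ nor $\alpha$.

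When you write this out, the induction must be strengthened. We have $X_S=e_{i_1}\otimes X_{S'}+\hat\nabla_{e_{i_1}}X_{S'}$, so knowing only $X_{S'}(p)=e_{S'}(p)$ does not control $\hat\nabla_{e_{i_1}}X_{S'}$ at $p$.

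A class of tensors that closes up is the $C^\infty(M)$-span of tensor products having at least one factor of the form $\hat\nabla^q_Y e_J$, with $q\ge 1$ and $Y\in C^\infty(\otimes^q TM)$. This class:
\begin{itemize}
\item contains $\hat\nabla_{e_i}e_{S'}$;
\item is stable under $e_i\otimes(\cdot)$;
\item is stable under $\hat\nabla_{e_i}$, because $\hat\nabla_{e_i}\hat\nabla^q_Ye_J=\hat\nabla^{q+1}_{e_i\otimes Y}e_J+\hat\nabla^q_{\hat\nabla_{e_i}Y}e_J$ by Proposition \ref{prop:mega};
\item has all its elements vanishing at $p$, by tensoriality in $Y$, Proposition \ref{prop:leibniz}, and the hypothesis.
\end{itemize}
Then $X_S-e_S = e_{i_1}\otimes(X_{S'}-e_{S'})+\hat\nabla_{e_{i_1}}e_{S'}+\hat\nabla_{e_{i_1}}(X_{S'}-e_{S'})$ lies in this class by induction, so it vanishes at $p$.

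Formulated this way, your (c) needs neither (a) nor associativity of $\odot$; right-nesting suffices. It is a self-contained alternative to the paper's operator-level double induction.
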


\begin{proof}
	We prove \ref{item:zero} and \ref{item:comp} first:
	\begin{enumerate}
		\item[\ref{item:zero}] Applying \( \hat{\n}_{e_S}^s \) to both sides of \eqref{eq:christoffel} and using Propositions \ref{prop:mega} and \ref{prop:leibniz} yields \[ 0=\left[\hat{\n}_{e_{SI}}^{s+|I|} e_j\right]_p = \left[\left(\hat{\n}_{e_S}^s\G_{\ I,j}^k\right) e_k\right]_p. \] The result follows by linear independence of the \( e_k \)'s.
		\item[\ref{item:comp}] We proceed by induction on \( s \): The claim is vacuous for \( s=1 \). Suppose we have shown \( \left[\n_{e_T}^t \b\right]_p = \left[\n_{e_{i_1}}\circ\dots\circ \n_{e_{i_t}}\b\right]_p \) for all \( \b\in C^\i(E) \) and \( |T|=t<s \). It follows from Proposition \ref{prop:mega} and our assumption that the Christoffel symbols vanish at \( p \) that 
			\begin{equation*}
				\left[\n_{e_S}^s \a\right]_p = \left[\n_{e_{i_1}}\circ\n_{e_{i_{2}},\dots,e_{i_s}}^{s-1}\a\right]_p.
			\end{equation*}
			Now suppose we have shown
			\begin{equation*}
				\left[\n_{e_S}^s \a\right]_p = \left[\n_{e_{i_1}}\circ\dots\circ \n_{e_{i_{j-1}}}\circ\n_{e_{i_{j}},\dots,e_{i_s}}^{s-j+1}\a\right]_p.
			\end{equation*}
			
			By Proposition \ref{prop:mega},
			\begin{equation}\label{eq:ind0}
				\left[\n_{e_S}^s \a\right]_p = \left[\n_{e_{i_1}}\dots \n_{e_{i_{j-1}}} \left( \n_{e_{i_j}} \n_{e_{i_{j+1}},\dots,e_{i_s}}^{s-j} - \n_{\n_{e_{i_j}}e_{i_{j+1}},\dots,e_{i_s}}^{s-j} \cdots - \n_{e_{i_{j+1}},\dots,\n_{e_{i_j}}e_{i_s}}^{s-j} \right)\a\right]_p.
			\end{equation}
			Using our inductive step with \( t=j-1 \), \eqref{eq:ind0} is equal to
			\begin{equation*}
				\left[\n_{e_{i_1}}\dots\n_{e_{i_{j}}} \n_{e_{i_{j+1}},\dots,e_{i_s}}^{s-j}\a - \n_{e_{i_1},\dots, e_{i_{j-1}}}^{j-1}\left(\G_{\ i_j, i_{j+1}}^l \n^{s-j}_{e_l,\dots,e_{i_s}} \a +\cdots + \G_{\ i_j, i_{s}}^l \n^{s-j}_{e_{i_{j+1}},\dots,e_{l}} \a \right)\right]_p,
			\end{equation*}
			all but the first term of which vanishes by Proposition \ref{prop:leibniz} and \ref{item:zero}. The result follows by induction on \( j \).
		\item[\ref{item:monomial}] By \ref{item:comp}, the higher covariant derivative of the monomial can be computed as a sequence of directional derivatives and the result is immediate.
		\item[\ref{item:nr}] The claim follows from Proposition \ref{prop:contract} and \ref{item:comp} using induction on \( s \). In the case that \( s=0 \), we have for \( v\in T_p M \),
			 \begin{align*}
			 	R_{e_i, e_j}^{TM} v &= (\hat{\n}_{e_i}\circ\hat{\n}_{e_j}-\hat{\n}_{e_j}\circ\hat{\n}_{e_i}) (v^k e_k)\\
				&=((\hat{\n}_{e_i}\circ\hat{\n}_{e_j}-\hat{\n}_{e_j}\circ\hat{\n}_{e_i})v^k )e_k\\
				&=([e_i,e_j] v^k) e_k\\
				&=0.
			 \end{align*}
			 Now suppose we have shown \( (\hat{\n}^r R^{TM})_p=0 \) for all \( r<s \). By Proposition \ref{prop:contract} and our inductive step,
			 \begin{equation}\label{eq:euc0}
			 	(\hat{\n}^s_S R^{TM})_{e_i,e_j} (v) = \hat{\n}^s_S(R_{e_i,e_j}^{TM}(v^k e_k)).
			 \end{equation} 
			 By Proposition \ref{prop:mega}, the right hand side of \eqref{eq:euc0} is equal to \[ (\hat{\n}^{s+2}_{Se_ie_j}-\hat{\n}^{s+2}_{Se_je_i})(v^k e_k), \]
			 which by Proposition \ref{prop:leibniz} and our assumption on the higher-order Christoffel symbols is equal to \[ (\hat{\n}^{s+2}_{Se_ie_j}-\hat{\n}^{s+2}_{Se_je_i})(v^k) e_k. \] By \ref{item:comp}, this is equal to \[ (\hat{\n}^s_S\circ R_{e_i,e_j}^{M\times \R} (v^k)) e_k = 0. \] 
	\end{enumerate}
\end{proof}

\section{Derivatives at a Point}\label{sec:derivativesatapoint}
\begin{defn}
	Suppose \( M \) is a smooth \( n \)-dimensional manifold equipped with a torsion-free connection \( \hat{\n} \) and that \( \pi: E\to M \) is a \( d \)-dimensional vector bundle with connection \( \n \). Fix \( p\in M \) and let\footnote{The dependence on \( \n \) and \( \hat{\n} \) will occasionally be made explicit since later we will be varying these.} \( \Phi^k_p = \Phi^k_p(E,\hat{\n},\n) \) be the linear map\footnote{The \( \boxtimes \) symbol will be used throughout to denote this external tensor product in place of \( \otimes \) for notational clarity.},
	\begin{align}\label{eq:Phip}
		\Phi_p^k:  \otimes(T_pM) \boxtimes \wedge^k(E_p) &\rightarrow \left[C^\i(\wedge^k(E^*))\right]^*\\
		(v_1\otimes\cdots\otimes v_j)\boxtimes \a &\mapsto \left[ \o \mapsto \left(\n_{\tilde{v}_1\otimes\cdots\otimes \tilde{v}_j}^j \o \right)_p(\a)\right]\notag,
	\end{align}
	where the pairing between \( \wedge^k(E_p) \) and \( \wedge^k(E_p^*) \) is given by the determinant of the pairing between \( E \) and \( E^* \), and \( \tilde{u}\in C^\i(TM) \) refers to a smooth extension of \( u\in T_pM \). Since \( \n^j \) is tensorial in its \( j \) vector field arguments and we are immediately evaluating the resulting section \( \n_{\tilde{v}_1\otimes\cdots\otimes \tilde{v}_j}^j \o \) at \( p \), the definition of \( \Phi_p^k \) is independent of the choice of extensions \( \tilde{v}_i \) of \( v_i \).
\end{defn}

\begin{example}\label{ex:kernel0}
	Let \( a,b\in T_pM \) and \( \a\in \wedge^k(E_p) \). Then \[ \Phi_p^k((a\otimes b - b\otimes a)\boxtimes \a + 1\boxtimes R_{a,b}^E\a)=0. \]
\end{example}

\begin{defn}
	Let \( \Phi_p^{r,k} \) denote the restriction of \( \Phi_p^k \) to \( \otimes^{\leq r}(T_pM) \boxtimes \wedge^k(E_p) \) and let \( \U_k^r(E;p) \) be the image of \( \Phi_p^{r,k} \). Also, in a slight abuse of notation, let \( \U_k^r(M;p) \) denote the space \( \U_k^r(TM;p) \).
\end{defn}

The space \( \U_k^r(E;p) \) is independent of \( \hat{\n} \) and \( \n \), and is contained in the continuous dual space of \( C^\i(\wedge^k(E^*)) \), given its natural Frechet topology. We are primarily interested in the cases that \( E=TM \) and \( E=N \) where \( N \) is the normal bundle of an embedding of a Riemannian manifold \( M \) in \( \R^n \). In the first case,

\begin{prop}\label{prop:surjective}
	The space \( \U_k^r(M;p) \) is the space of order \( \leq r \) de Rham \( k \)-currents\footnote{A compactly supported current \( T \) has order \( r \) if it extends to a continuous linear functional on \( C^r \) forms, and does not extend further to \( C^s \) forms for \( s<r \). Every compactly supported current has finite order.} on \( M \) supported in \( \{p\} \).
\end{prop}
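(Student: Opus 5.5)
Two inclusions are needed: every element of \( \U_k^r(M;p) \) is a \( k \)-current of order \( \leq r \) supported in \( \{p\} \), and every such current lies in the image of \( \Phi_p^{r,k} \).

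For the first inclusion, fix \( v=v_1\otimes\cdots\otimes v_j \) with \( j\leq r \) and \( \a\in\wedge^k(T_pM) \). Then \( \Phi_p^k(v\boxtimes\a) \) is the functional \( \o\mapsto(\n^j_{\tilde v}\o)_p(\a) \). In local coordinates \( (e^i) \), the operator \( \n^j_{\tilde v} \) is a linear differential operator of order \( j \) whose coefficients are built from the higher-order Christoffel symbols; this follows by induction from \eqref{eq:higher}. Hence the functional depends only on the \( j \)-jet of \( \o \) at \( p \). It is therefore continuous for the \( C^j \) (so \( C^r \)) topology, it vanishes on forms whose support avoids \( p \), and so it is a current of order \( \leq r \) supported in \( \{p\} \). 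Linearity then handles general elements of \( \otimes^{\leq r}(T_pM)\boxtimes\wedge^k(T_pM) \).

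For the reverse inclusion, I will use the classical structure theorem cited in the introduction (Schwartz, Melrose). In a chart around \( p \), a \( k \)-current of order \( \leq r \) supported at \( p \) has the form
\[ T(\o)=\sum_{|I|\leq r}\left(\frac{\p^{I}}{\p e^{I}}\o\right)_p(\a_I), \]
where the partial derivatives act on the coefficient functions of \( \o \) in the coordinate coframe and \( \a_I\in\wedge^k(T_pM) \). It is therefore enough to show that each functional \( \o\mapsto(\p^I\o)_p(\a) \) with \( |I|\leq r \) lies in the image of \( \Phi_p^{r,k} \). I will prove this by induction on \( |I| \). For a word \( S \) with \( \langle S\rangle=I \), expand \( (\n^{|S|}_{e_S}\o)_p(\a) \) in coordinates using \eqref{eq:higher} and the Leibniz rule (Proposition \ref{prop:leibniz}). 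The result is \( (\p^I\o)_p(\a) \) plus a linear combination of terms \( (\p^J\o)_p(\b) \) with \( |J|<|I| \) and \( \b\in\wedge^k(T_pM) \), the coefficients coming from Christoffel symbols and their derivatives at \( p \). By the induction hypothesis these lower-order terms are in the image of \( \Phi_p^{|I|-1,k} \). Subtracting them shows that \( (\p^I\cdot)_p(\a) \) is the image of an element of \( \otimes^{\leq|I|}(T_pM)\boxtimes\wedge^k(T_pM) \).

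An alternative for the surjectivity step is a dimension count. Lemma \ref{lem:monomial} says the pairing of \( \Phi_p^k(e_S\boxtimes\a) \) with the forms \( \frac{(e-p)^T}{T!}\,\b \), where \( |S|\leq|T| \), is upper triangular with respect to the grading by degree, and its diagonal is \( \d_{\langle S\rangle,T}\,\b(\a) \). This shows that \( \Phi_p^{r,k} \), restricted to a lift of \( S^{\leq r}(T_pM)\boxtimes\wedge^k(T_pM) \) (one word per multi-index), is injective. Since that space has the same dimension as the space of order \( \leq r \) point-supported currents, \( \Phi_p^{r,k} \) is onto. I expect the main obstacle to be keeping the triangularity bookkeeping correct. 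Lemma \ref{lem:monomial} controls only the pairings with \( |S|\leq|T| \), so I need to order the words and monomials by degree so that the pairings with \( |S|>|T| \), which the lemma does not determine, fall strictly on one side of the diagonal and do not spoil invertibility.
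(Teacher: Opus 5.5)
Your proof is correct. The paper does not actually prove this proposition. It is asserted right after the definition of \( \U_k^r(E;p) \), with the Schwartz--Melrose structure theorem from the introduction as implicit justification, and it is used later only through the resulting dimension count in the proofs of Theorems \ref{thm:PBW1} and \ref{thm:PBW2}. Your argument supplies what is left implicit there:
\begin{itemize}
\item the easy inclusion, because \( \n^j_{\tilde v} \) is an order-\( j \) differential operator;
\item the reverse inclusion, by reducing to the structure theorem and peeling off top-order terms. This works because on coefficient functions the principal part of \( \n^{|S|}_{e_S} \) is \( \p^{\langle S\rangle} \): coordinate derivatives commute, and the correction terms in \eqref{eq:higher} have lower order.
\end{itemize}

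Three small comments.

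First, your dimension-count alternative is not independent of the structure theorem. Lemma \ref{lem:monomial} together with your first step shows only that \( \U_k^r(M;p) \) is the full dual of the space of \( r \)-jets of \( k \)-forms at \( p \). The nontrivial half of identifying that dual with the order-\( \leq r \) currents supported at \( p \) is precisely the structure theorem. The inductive route has the mild advantage of producing explicit preimages.

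Second, the triangularity bookkeeping you flag is harmless. Lemma \ref{lem:monomial} gives pairing \( 0 \) when \( |S|<|T| \) and \( \d_{\langle S\rangle,T}\,\b_p(\a) \) when \( |S|=|T| \). So ordering rows and columns by degree gives a block-triangular matrix with invertible diagonal blocks, whatever the entries with \( |S|>|T| \) are. Equivalently, test against the monomial of maximal degree, as in the proof of Theorem \ref{thm:algebra}.

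Third, your argument works verbatim for a general bundle \( E \) using a local frame of \( \wedge^k(E^*) \). That is the generality in which the paper later invokes the proposition.
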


\begin{defn}\label{def:flat}
	We say \( M \) (resp. \( E \)) is \emph{asymptotically flat} at \( p \) if \( \hat{\n}^j(R^{TM})(p)=0 \) (resp. \( \n^j(R^E)(p)=0 \)) for all \( j\geq 0 \). By Lemma \ref{lem:euclidean}, \( M \) is asymptotically flat if the higher-order Christoffel symbols are zero in some coordinate system about \( p \).
\end{defn}

The domain of \( \Phi_p \) is an algebra, being the tensor product of tensor and exterior algebras, but \( \Phi_p \) only preserve this algebra structure if both \( M \) and \( E \) are asymptotically flat at \( p \). In fact,
\begin{thm}
	\label{thm:algebra}
	The following are equivalent:
	\begin{enumerate}
		\item \( ker(\Phi_p) \) is an ideal;\label{equiv:1}
		\item \( M \) and \( E \) are asymptotically flat at \( p \);\label{equiv:2}
		\item The map \( \Phi_p \) factors through \( S(T_pM) \boxtimes \wedge(E_p) \).\label{equiv:3}
	\end{enumerate}
	If any of the above hold, then \( \Phi_p \) descends to an isomorphism\footnote{In any case, there is an isomorphism between the two spaces depending on a trivialization of \( E \) in a coordinate chart about \( p \). See also Corollary \ref{cor:unnatural}.} between \(  S(T_p M) \boxtimes \wedge^\bullet (E_p) \) and \( \U_\bullet (E;p) \).
\end{thm}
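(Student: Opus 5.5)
I will prove the cycle \ref{equiv:2} \(\Rightarrow\) \ref{equiv:3} \(\Rightarrow\) \ref{equiv:1} \(\Rightarrow\) \ref{equiv:2}, and read off the final isomorphism along the way. Throughout I use the kernel description coming from Lemma \ref{lem:fundamental} together with Proposition \ref{prop:mega}. At \(p\), a term \(u\otimes(ab-ba)\otimes v\boxtimes\a\) differs, modulo \(\ker\Phi_p\), by explicit lower-order terms. These terms involve \((\n^{\bullet}R^E)_p\) and \((\hat\n^{\bullet}R^{TM})_p\), obtained by moving the non-leading terms of \eqref{eq:fundamental} to the right-hand side. Example \ref{ex:kernel0} is the case \(u=v=1\).

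\emph{\ref{equiv:2} \(\Rightarrow\) \ref{equiv:3}.} Assume all covariant derivatives of \(R^{TM}\) and \(R^E\) vanish at \(p\). I would show by induction on total order that every term on the right of \eqref{eq:fundamental}, evaluated at \(p\), vanishes. The pieces involving \(\n_{u_{(1)}}R^E\) or \(\hat\n_{u_{(2)}}R^{TM}\) are contracted against tensors and then evaluated at \(p\). By the Leibniz-type expansions (Propositions \ref{prop:leibniz} and \ref{prop:contract}), each summand contains a factor \((\n^{s}R)_p=0\).

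The left side of \eqref{eq:fundamental} is \(\n^{i+2+j}_{u(ab-ba)v}\) plus lower-order terms of the same commutator shape, namely \(u_{(1)}\otimes\hat\n_{u_{(2)}}((ab-ba)v)\). Since \(\hat\n\) acts as a derivation, these expand into terms still containing a commutator factor \((a'b'-b'a')\), and they vanish at \(p\) by induction. Hence \(\Phi_p(u(ab-ba)v\boxtimes\a)=0\) for all \(u,v\). The two-sided ideal these elements generate is exactly the kernel of \(\otimes(T_pM)\to S(T_pM)\), so \(\Phi_p\) factors.

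\emph{\ref{equiv:3} \(\Rightarrow\) \ref{equiv:1}, and the isomorphism.} I would first show that the induced map \(\bar\Phi_p:S(T_pM)\boxtimes\wedge^\bullet(E_p)\to\U_\bullet(E;p)\) is injective. Choose coordinates and a local frame of \(E\), and test against \(\o=\frac{(e-p)^T}{T!}\,\epsilon\) with \(\epsilon\) a constant-frame form. Lemma \ref{lem:monomial} then gives, for \(|S|\le|T|\), the pairing \(\d_{\langle S\rangle,T}\) times the frame pairing. The resulting pairing matrix is triangular with identity diagonal with respect to the order filtration, so \(\bar\Phi_p\) is injective. Surjectivity holds by definition of \(\U\). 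This gives the isomorphism, so \(\ker\Phi_p=\ker(\otimes\to S)\boxtimes\wedge\). That space is an ideal, because \(\otimes\to S\) is an algebra map and the domain carries the tensor-product algebra structure.

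\emph{\ref{equiv:1} \(\Rightarrow\) \ref{equiv:2}.} This is the step I expect to be the main obstacle. Suppose the kernel is an ideal, and argue by induction on \(s\) that \((\hat\n^sR^{TM})_p=0=(\n^sR^E)_p\).

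For \(s=0\), take \(x=(a\otimes b-b\otimes a)\boxtimes\a+1\boxtimes R^E_{a,b}\a\in\ker\Phi_p\). Multiplying on the left by \(c\boxtimes 1\) gives \(c(ab-ba)\boxtimes\a+c\boxtimes R_{a,b}\a\), which must also lie in the kernel. Lemma \ref{lem:fundamental} with \(u=c\) computes \(\Phi_p\) of the first piece as \(\Phi_p(-1\boxtimes(\n_cR^E)_{a,b}\a - c\boxtimes R_{a,b}\a + (R^{TM}_{a,b}c)\boxtimes\a)\), up to signs. Comparing, the kernel contains \(1\boxtimes(\n_cR^E)_{a,b}\a-(R^{TM}_{a,b}c)\boxtimes\a\).

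Separating orders using the injectivity of \(\Phi_p\) on order \(\le1\) gives \(R^{TM}_p=0\) and \((\n R^E)_p=0\). The injectivity comes from Lemma \ref{lem:monomial}, since symmetrization only matters in order \(\ge2\). Similarly, multiplying \(x\) by \(1\boxtimes\b\) and using that \(R^E\) acts as a derivation on \(\wedge\) should force \(R^E_p\b\wedge\a=R^E_p(\b\wedge\a)\)-type identities. These give \(R^E_p=0\); I would check this via \(k=0\) versus \(k=1\).

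Higher \(s\) proceed the same way, multiplying by words \(u\) of length \(s\). The top-order curvature derivative then appears as the only term not killed by the inductive hypothesis. The delicate point is to isolate it using the order-filtration injectivity from Lemma \ref{lem:monomial}.
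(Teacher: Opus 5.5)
Your \ref{equiv:2}\(\Rightarrow\)\ref{equiv:3} is the necessity half of the paper's Lemma \ref{lem:algebra}. Your \ref{equiv:3}\(\Rightarrow\)\ref{equiv:1} and the isomorphism (triangular pairing via Lemma \ref{lem:monomial}, hence \(\ker\Phi_p=\ker(\otimes\to S)\boxtimes\wedge\)) are exactly the paper's argument. The gap is in \ref{equiv:1}\(\Rightarrow\)\ref{equiv:2}.

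Your \(s=0\) computation puts the \(R^{TM}\) term in the wrong place: left multiplication by \(c\boxtimes 1\) never produces curvature of \(TM\). With \(u=c\), \(v=1\) in \eqref{eq:fundamental}, the \(R^{TM}\) term acts on \(\hat\n_{u_{(4)}}1\in\otimes^0(TM)\) and so vanishes. Directly, \(\n^3_{c(ab-ba)}=(\n_cR)_{a,b}+R_{a,b}\circ\n_c\), so \((c\boxtimes 1)\,x\in\ker\Phi_p\) yields only \((\n R^E)_p=0\). Curvature of \(TM\) appears only when a factor sits to the \emph{right} of the commutator: \(\n^3_{(ab-ba)c}=R_{a,b}\circ\n_c-\n_{R^{TM}_{a,b}c}\). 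So you need \(x\,(c\boxtimes 1)\), which gives \((R^{TM}_{a,b}c)\boxtimes\a\in\ker\Phi_p\).

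Your higher-\(s\) step multiplies only by words \(u\) on the left, so as written it can never reach \(\hat\n^sR^{TM}\). It is also only a sketch. It silently needs every lower-order commutator word \(u'(a'b'-b'a')v'\boxtimes\a\) to lie in \(\ker\Phi_p\), so that the left side of \eqref{eq:fundamental} is controlled, and you never establish this.

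The missing idea, which is how the paper proceeds, makes this direction nearly free. By Example \ref{ex:kernel0} with \(\a=1\in\wedge^0(E_p)\), we have \((x\otimes y-y\otimes x)\boxtimes 1\in\ker\Phi_p\) for any choice of connections. This holds because curvature acts by zero on \(\wedge^0\); equivalently, Hessians of functions are symmetric for torsion-free \(\hat\n\). So if \(\ker\Phi_p\) is a (two-sided) ideal, it contains \(\ker(\otimes\to S)\boxtimes\wedge\), which is \ref{equiv:3}.

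Then \ref{equiv:3}\(\Rightarrow\)\ref{equiv:2} is the converse of your own \ref{equiv:2}\(\Rightarrow\)\ref{equiv:3} argument, namely the sufficiency half of Lemma \ref{lem:algebra}. Induct on \(L=|u|\). Every commutator word then pairs to zero at \(p\), and lower derivatives of both curvatures vanish at \(p\) by induction. So every term of \eqref{eq:fundamental}, applied to \(\o\) and evaluated at \(p\), vanishes except \((\n^L_uR^E)_{ab}\circ\n_v\) and \(-\n_{(\hat\n^L_uR^{TM})_{ab}(v)}\). Taking \(v=1\) kills the second and isolates \(\n^LR^E\). Then take \(v\) a vector and test against a linear monomial via Lemma \ref{lem:monomial}; this isolates \(\hat\n^LR^{TM}\). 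Your \(k=0\) versus \(k=1\) argument for \(R^E_p=0\) is correct but is subsumed by this.
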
 

To prove Theorem \ref{thm:algebra}, we will make use of the following lemma, which implies the equivalence between \ref{equiv:2} and \ref{equiv:3}:

\begin{lem}
	\label{lem:algebra}
	Let \( K\geq 0 \). The tensors \( \hat{\n}^l R^{TM} \) and \( \n^l R^E \) are zero at \( p \) for all \( l\leq K \) if and only if for all \( i\leq K \), \( u\in C^\i(\otimes^i (TM)) \), \( j\geq 0 \), \( v\in C^\i(\otimes^j (TM) ) \), \( a,b\in C^\i(TM) \), and \( \o\in C^\i(\wedge(E)) \),
	\begin{equation}\label{eq:exchange}
	\n^{i+2+j}_{u(a b-b a) v}\o(p)=0.
	\end{equation}
\end{lem}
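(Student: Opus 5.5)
The plan is to prove both directions by induction on \( i \), using the Fundamental Commutation Lemma \ref{lem:fundamental} to write \( \n_{u(ab-ba)v} \) in terms of lower-order data. Rearranged as in the discussion after that lemma, it reads
\[
\n^{i+2+j}_{u(ab-ba)v} = \left(\n_{u_{(1)}} R^E \right)_{\hat{\n}_{u_{(2)}}(ab)}\circ\n_{u_{(3)}\hat{\n}_{u_{(4)}} v} - \n_{u_{(1)}\left(\hat{\n}_{u_{(2)}} R^{TM}\right)_{\hat{\n}_{u_{(3)}}(ab)}(\hat{\n}_{u_{(4)}}v)} - (\text{lower terms}).
\]
Here the lower terms are \( \n_{u_{(1)}\hat{\n}_{u_{(2)}}((ab-ba)v)} \) with \( u_{(2)} \) of positive degree. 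Every tensor appearing is tensorial in \( u,a,b,v \), so at \( p \) we may choose convenient extensions of the vectors. In particular, we may take extensions with \( \hat{\n}^s \) of them vanishing at \( p \), for instance coordinate fields adjusted at \( p \). Cleaner still, we expand via Proposition \ref{prop:leibniz}: the lower terms are \( \n_{u'(a'b'-b'a')v'} \) with \( |u'|<i \), where the words \( u',a',b',v' \) arise from distributing derivatives of \( u \) onto \( a,b,v \).

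\textbf{Direction (\( \Rightarrow \)).} Assume \( \hat{\n}^lR^{TM}(p)=0 \) and \( \n^lR^E(p)=0 \) for \( l\le K \), and induct on \( i\le K \).
\begin{itemize}
\item For \( i=0 \), the lemma gives \( \n_{(ab-ba)v}\o = R^E_{a,b}\n_v\o - \n_{R^{TM}_{a,b}v}\o \), which vanishes at \( p \) since both curvatures vanish at \( p \). Note that \( R^E \) acts on \( \wedge(E) \) as a derivation of \( R^E \), and \( R^{TM}_{a,b}v \) is a derivation action on \( \otimes(TM) \).
\item For general \( i \), the first two terms on the right involve \( \n^{|u_{(1)}|}R^E \) and \( \hat{\n}^{|u_{(2)}|}R^{TM} \) evaluated at \( p \), of order at most \( i\le K \), so they vanish.
\item The lower terms vanish at \( p \) by the inductive hypothesis, since \( |u'|<i \) and \( v' \) is arbitrary.
\end{itemize}
The one point to check is that these lower terms genuinely have the form \( u'(a'b'-b'a')v' \). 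This holds because \( \hat{\n}_{u_{(2)}}((ab-ba)v) \), expanded by Leibniz, distributes derivatives onto \( a \), \( b \) and \( v \), and the antisymmetrization in \( a,b \) is preserved by the shuffle structure.

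\textbf{Direction (\( \Leftarrow \)).} Assume \eqref{eq:exchange} for all \( i\le K \), and show by induction on \( l\le K \) that \( \n^lR^E(p)=0 \) and \( \hat{\n}^lR^{TM}(p)=0 \). Suppose the claim holds for all orders below \( l \). Apply the displayed identity with \( |u|=l \) at \( p \): the left side and all lower terms vanish by hypothesis, and every curvature-derivative term of order below \( l \) vanishes by induction. What remains is
\[
(\n^l_u R^E)_{a,b}\,\n_v\o - \n_{(\hat{\n}^l_uR^{TM})_{a,b}v}\,\o = 0 \quad\text{at } p .
\]
To separate the two curvatures, first take \( v=1 \) (\( j=0 \)). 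Then the second term is absent, since \( R^{TM} \) acts on degree-zero tensors by zero, and we get \( (\n^l_uR^E)_{a,b}\o(p)=0 \) for all \( \o \), so \( \n^lR^E(p)=0 \). Next take \( v \in T_pM \) and \( \o \) a function in \( \wedge^0 \) (on which \( R^E \) acts trivially). We obtain \( (\hat{\n}^l_uR^{TM})_{a,b}v\,(f)(p)=0 \) for all \( f \), and choosing \( f \) with \( df_p \) arbitrary forces \( \hat{\n}^lR^{TM}(p)=0 \).

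\textbf{Main obstacle.} The step I expect to be hardest is the bookkeeping of the lower terms. One must justify that \( \n_{u_{(1)}\hat{\n}_{u_{(2)}}((ab-ba)v)} \) with \( \deg u_{(2)}>0 \) really is a sum of expressions \( \n_{u'(a'b'-b'a')v'} \) with \( |u'|<|u| \). If this is awkward, the fallback is to evaluate at \( p \) using extensions of \( a,b \) and the letters of \( u \) whose covariant derivatives of all orders up to \( K \) vanish at \( p \). Then \( \hat{\n}_{u_{(2)}}(ab)=0 \) at \( p \) whenever \( u_{(2)} \) is nonempty, and the lower terms reduce to \( \n_{u'(ab-ba)\hat{\n}v} \), which handles them directly.
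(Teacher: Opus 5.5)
Your proposal is correct and follows the paper's proof. Both argue by induction with the Fundamental Commutation Lemma, kill curvature terms of lower order by induction, and separate the top-order residual \( (\n^l_uR^E)_{a,b}\n_v\o-\n_{(\hat{\n}^l_uR^{TM})_{a,b}v}\o=0 \) at \( p \) by taking first \( v=1 \), then \( j=1 \) with a test function, as the paper does; your Leibniz-plus-cocommutativity argument that the lower terms have the form \( \n_{u'(a'b'-b'a')v'} \) with \( |u'|<|u| \) supplies a step the paper only asserts. The fallback of choosing extensions whose higher covariant derivatives vanish at \( p \) is unnecessary and not available in general, since the antisymmetric part of \( \hat{\n}^2a \) at \( p \) is \( R^{TM}_{\cdot,\cdot}a(p) \), which is not known to vanish in the sufficiency direction; rely on the Leibniz route instead.
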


\begin{proof}
	We proceed by induction on \( K \). First assume \( K=0 \). By Proposition \ref{prop:mega}, 
	\begin{equation}\label{eq:lemalg0}
		\n^{2+j}_{(a b-b a) v} = R_{a,b}^E\circ \n_v^j - \n_{R_{a,b}^{TM}(v)}^j,
	\end{equation}
	from which necessity is clear. For sufficiency, let \( v=1 \) (i.e. the \( (j=0)\)-tensor field equal to \( 1 \).) Then \( R_{a,b}^{TM}(v)=0 \) and \( \n_v^0=Id \), showing \( R_{a,b}^E=0 \) at \( p \). Now let \( j=1 \). By \eqref{eq:lemalg0}, \( \n_{R_{a,b}^{TM}(v)} \o (p) = 0 \) for all \( \o\in C^\i(\wedge(E)) \). If \( x:=R_{a,b}^{TM}(v)(p)\neq 0 \) for some choice of \( a,b  \) and \( v \), then choosing \( \o \) equal to the monomial determined by Lemma \ref{lem:monomial} with \( S=x \) yields a contradiction. Thus, \( R^{TM}(p)=0 \).
	
	Now assume the statement holds for \( K=L-1 \), we will show it holds for \( K=L \). To prove necessity, suppose \( \hat{\n}^l R^{TM} \) and \( \n^l R^E \) vanish at \( p \) for all \( l\leq L \). It suffices by induction to show \eqref{eq:exchange} for \( i=L \). In this case, Lemma \ref{lem:fundamental} states
	\begin{equation}\label{eq:last}
		\n^\bullet_{u_{(1)}\hat{\n}_{u_{(2)}}((ab-ba)v)} =\left(\n_{u_{(1)}}^{L-\star-\bullet} R^E \right)_{\hat{\n}_{u_{(2)}}^\star(ab)}\circ\n_{u_{(3)}}^\bullet\circ \n_v^j - \n^\bullet_{u_{(1)}\left(\hat{\n}_{u_{(2)}} R^{TM}\right)_{\hat{\n}_{u_{(3)}}(ab)}(\hat{\n}_{u_{(4)}}v)}.
	\end{equation}
	All the terms of the right hand side of \eqref{eq:last}, when evaluated on \( \o \), are zero at \( p \), and the left hand side of \eqref{eq:last} consists of the sum of \( \n^{i+2+j}_{u(a b-b a) v} \o (p) \) and terms which are zero at \( p \) by the inductive hypothesis. This proves necessity.
	
	For sufficiency, suppose \eqref{eq:exchange} holds for all \( i\leq L \). By the inductive hypothesis, \( \hat{\n}^l R^{TM} \) and \( \n^l R^E \) both vanish at \( p \) for all \( l<L \). We need to show that \( \hat{\n}^L R^{TM} \) and \( \n^L R^E \) also vanish at \( p \). Lemma \ref{lem:fundamental} implies
	\begin{equation}\label{eq:first}
		0 = \left(\left(\n_u^L R^E \right)_{ab}\circ \n_v^j\o - \n^\bullet_{\left(\hat{\n}_u R^{TM}\right)_{ab}(v)}\o\right)(p).
	\end{equation}
	The proof is concluded exactly as the \( K=0 \) case.
\end{proof}

\begin{proof}[Proof of Theorem \ref{thm:algebra}]
	By Lemma \ref{lem:algebra}, \ref{equiv:2} and \ref{equiv:3} are equivalent. To see that \ref{equiv:1} implies \ref{equiv:3}, suppose \( ker(\Phi_p) \) is an ideal and let \( x, y \in T_p M\). Then \( (x\otimes y - y\otimes x )\boxtimes 1 \in ker(\Phi_p), \) so \( ker(\Phi_p) \) contains the kernel of the natural projection \(  \otimes(T_pM) \boxtimes \wedge(E_p) \to  S(T_pM) \boxtimes \wedge(E_p). \)
	
	Finally, we show that \ref{equiv:3} implies that \( [\Phi_p] \) is injective as a map from \( S(T_pM) \boxtimes \wedge(E_p) \) to \( C^\i(\wedge(E^*))^* \), which in turn implies \ref{equiv:1}.
	
	Pick local coordinates \( (e^i)_{i=1}^n \) about \( p \) and let \( s \) be a non-zero element of \( S^j (T_pM) \boxtimes \wedge^k (E_p) \). Write \( s \) in terms of these coordinates, say, \( s=a^{I,J}\cdot e_I \boxtimes \a_J \), and let \( e_{\bar{I}}\boxtimes \a_{\bar{J}} \) be a term in the sum so that \( |\bar{I}| \) is maximized. Let \( \o=\frac{1}{\bar{I}!}(e-p)^{\bar{I}}\cdot \o \), where \( \o\in C^\i(\wedge(E^*)) \) is any section such that \( \o_p(\a_{\bar{J}})\neq 0 \). Then by Lemma \ref{lem:monomial}, \( \Phi_p(s)(\o)\neq 0 \).
\end{proof}

On the other hand, the domain of \( \Phi_p \) is also a co-algebra since it is the tensor product of co-algebras, and
\begin{thm}
	\label{thm:co-alg}
	The space \( \U_\bullet(E;p) \) has a filtered and graded co-associative co-algebra structure via duality with wedge product on \( C^\i(\wedge(E^*)) \), and the map \( \Phi_p \) is a filtered and graded co-algebra homomorphism.
\end{thm}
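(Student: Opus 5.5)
The plan is to \emph{define} the co-product on \( \U_\bullet(E;p) \) by pushing forward the co-product of the domain through \( \Phi_p \), and to use the Leibniz rule (Proposition \ref{prop:leibniz}) to show that this push-forward is well-defined and is dual to wedge product. Concretely, let \( \D_\otimes \) denote the tensor product of the tensor co-product on \( \otimes(T_pM) \) with the wedge co-product on \( \wedge(E_p) \). The key identity I aim for is: for \( x\in \otimes(T_pM)\boxtimes\wedge(E_p) \) and forms \( \o,\eta\in C^\i(\wedge(E^*)) \),
\[
\Phi_p(x)(\o\wedge\eta) = \left(\Phi_p\otimes\Phi_p\right)(\D_\otimes x)(\o\otimes\eta).
\]
To prove it, take \( x=v\boxtimes\a \) with \( v=v_1\otimes\cdots\otimes v_j \). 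By Proposition \ref{prop:leibniz} (in its exterior form, Proposition \ref{prop4}),
\[
\n^j_v(\o\wedge\eta)=(\n_{v_{(1)}}\o)\wedge(\n_{v_{(2)}}\eta).
\]
Evaluating at \( p \) against \( \a \), the determinant pairing gives \( (\b\wedge\g)(\a)=\b(\a_{(1)})\g(\a_{(2)}) \), with the wedge co-product and appropriate degree conventions. This yields the identity.

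Next, I define \( \D_p:\U_\bullet(E;p)\to\U_\bullet(E;p)\otimes\U_\bullet(E;p) \) by \( \D_p(\Phi_p x):=(\Phi_p\otimes\Phi_p)(\D_\otimes x) \). Well-definedness is the main obstacle. I need that if \( \Phi_p x=0 \), then \( (\Phi_p\otimes\Phi_p)(\D_\otimes x)=0 \) as an element of \( \U\otimes\U \), not merely that it vanishes on the decomposable forms \( \o\otimes\eta \). The identity above gives vanishing on all pairs \( (\o,\eta) \) of global sections. Since \( \U^r_k(E;p) \) is finite dimensional, I would argue that global sections \( \o \) separate points of \( \U \): \( \U \) is by definition a space of functionals on \( C^\i(\wedge(E^*)) \). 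Hence the natural map \( \U\otimes\U\to (C^\i\otimes C^\i)^* \) is injective, because a finite-dimensional space of functionals is dual to a finite-dimensional quotient of the forms, and tensors of such quotients are spanned by decomposables. So \( \D_p \) is well-defined and is characterized by \( (\D_pT)(\o\otimes\eta)=T(\o\wedge\eta) \). Independence of the choice of \( \hat\n,\n \) follows from this characterization.

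The remaining properties follow by duality or transfer from the domain. Co-associativity follows from associativity of \( \wedge \) through the same injectivity, now applied to triple tensors; alternatively it follows from co-associativity of \( \D_\otimes \) together with surjectivity of \( \Phi_p \) onto \( \U \). For the counit, evaluation against the constant function \( 1 \) on \( \U_0 \) corresponds to the counit of the domain, which is nonzero only on \( \otimes^0\boxtimes\wedge^0 \). Grading is preserved because the wedge co-product splits \( \wedge^k \) into \( \wedge^a\otimes\wedge^{k-a} \), matching bidegrees of forms. The filtration is preserved because the tensor co-product maps \( \otimes^{\le r} \) into \( \sum_{a+b\le r}\otimes^{a}\otimes\otimes^{b} \), so \( \D_p\U^r\subset\sum\U^a\otimes\U^b \). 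Finally, \( \Phi_p \) is a co-algebra homomorphism by the very definition of \( \D_p \).
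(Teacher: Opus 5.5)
Your proposal is correct and is essentially the paper's proof. Both derive \( \Phi_p(x)(\o\wedge\eta)=(\Phi_p\otimes\Phi_p)(\D x)(\o\otimes\eta) \) from Proposition \ref{prop4} and the determinant pairing. Both then use the embedding \( \U\otimes\U\hookrightarrow (C^\i(\wedge(E^*))\otimes C^\i(\wedge(E^*)))^* \) to obtain \( \D_p \) with \( \D_p\circ\Phi_p=(\Phi_p\otimes\Phi_p)\circ\D \), and take the counit \( T\mapsto T(1) \). The only difference is cosmetic: the paper defines \( \D_p \) as the restriction of the dual of wedge product and notes that it lands in \( \U\otimes\U \), whereas you push \( \D_\otimes \) forward and check well-definedness, which makes explicit the injectivity the paper uses without comment.
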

In particular, \( ker(\Phi_p) \) is a co-ideal.
\begin{proof}
	This follows in its entirety from Proposition \ref{prop4}. 
	
	Let \( \o\in C^\i(\wedge^k(E^*)) \), \( \eta\in C^\i(\wedge^j(E^*)) \), and \( x\in \otimes^\star(T_pM) \boxtimes \wedge^l (E_p) \), where \( l=k+j \). Writing \( x=s_i\boxtimes \a_i \), we have \( \D(x)=s_{i(1)}\boxtimes \a_{i(1)}\otimes {s}_{i(2)}\boxtimes {\a}_{i(2)} \). Then by Proposition \ref{prop4},
	\begin{align}
		\Phi_p(x)(\o\wedge\eta)&=\left(\n_{\tilde{s}_i}(\o\wedge\eta)\right)_p(\a_i)\label{eq:thefirst}\\
		&=\left(\n_{\tilde{s}_{i(1)}}\o\right)\wedge \left(\n_{\tilde{s}_{i(2)}}\eta\right)_p(\a_i)\notag\\
		&=\left(\n_{\tilde{s}_{i(1)}}\o\right)_p(\a_{i(1)})\cdot\left(\n_{\tilde{s}_{i(2)}}\eta\right)_p(\a_{i(2)})\notag\\
		&=\Phi_p(s_{i(1)}\boxtimes \a_{i(1)})(\o) \cdot \Phi_p(s_{i(2)}\boxtimes \a_{i(2)})(\eta)\notag\\
		&=\Phi_p\otimes \Phi_p (\D(x))(\o\otimes \eta)\label{eq:thelast}.
	\end{align}
	The left hand side of \eqref{eq:thefirst} is \( \wedge^*(\Phi_p(x))(\o\otimes \eta) \), where \( \wedge^* \) is the dual map to the exterior product \[ \wedge: \oplus_{a+b=l}C^\i(\wedge^a(E^*))\otimes C^\i(\wedge^b(E^*)) \to C^\i(\wedge^l(E^*)). \]
	By \eqref{eq:thefirst}-\eqref{eq:thelast}, the restriction of \( \wedge^* \) to \( \U_l(E;p) \) factors through \[ \oplus_{a+b=l} \U_a(E;p) \otimes \U_b(E;p) \hookrightarrow (\oplus_{a+b=l}C^\i(\wedge^k(E^*))\otimes C^\i(\wedge^b(E^*)))^* \] 
	and thus defines a co-associative co-product \( \D_p: \U_{l}(E;p)\to \oplus_{a+b=l}\U_a(E;p)\otimes \U_b(E;p) \), satisfying
	\begin{equation}\label{eq:coalghomo}
		\D_p(\Phi_p(x))=\Phi_p\otimes \Phi_p (\D(x)).
	\end{equation}
	We also have a co-unit \( \e_p: \U(E;p)\to \R \) given by \( \e_p(T)=T(1) \), where \( 1\in C^\i(\wedge^0(E^*)) \) is the function on \( M \) identically equal to \( 1 \), i.e. the unit for wedge product. Commutativity of the co-unit diagram similarly follows from \eqref{eq:thefirst}-\eqref{eq:thelast}. Moreover,
	\[ \e_p(\Phi_p(x))= \Phi_p(s_i\otimes \a_i)(1) = (\n_{\tilde{s}_i}1)_p (\a_i), \]
	the right hand of which is zero except for \( s_i \in \otimes^0(T_pM) \) and \( \a_i\in \wedge^0(E_p) \), in which case it is equal to \( s_i\cdot \a_i \). In other words, \( \e_p\circ \Phi_p \) is the co-unit for \( \otimes(T_pM)\boxtimes \wedge(E_p) \), and this together with \eqref{eq:coalghomo} implies that \( \Phi_p \) is a co-algebra homomorphism.
\end{proof}

\begin{cor}
	\label{cor:co-alg}
	The co-product structure on \( \U(E;p) \) (hence that on \( \G^{finite}(\U(E)) \)) is intrinsic to the smooth structure of \( M \), and does not depend on the connections \( \hat{\n} \) or \( \n \).
\end{cor}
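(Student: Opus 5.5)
The co-product \( \D_p \) on \( \U(E;p) \) was constructed in the proof of Theorem \ref{thm:co-alg} purely by duality: for \( T\in \U_l(E;p) \), \( \D_p T \) is the unique element of \( \oplus_{a+b=l}\U_a(E;p)\otimes \U_b(E;p) \) satisfying
\[ (\D_p T)(\o\otimes\eta) = T(\o\wedge\eta) \]
for all \( \o\in C^\i(\wedge^a(E^*)) \) and \( \eta\in C^\i(\wedge^b(E^*)) \). I will show that neither the set \( \U(E;p) \) nor this characterizing identity refers to \( \hat{\n} \) or \( \n \). The corollary then follows.

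The first step is to show that \( \U_k^r(E;p) \) is independent of the connections. The paper already asserts this after the definition. For \( E=TM \), Proposition \ref{prop:surjective} identifies \( \U_k^r(M;p) \) intrinsically with the order-\( \leq r \) currents supported at \( p \). For general \( E \), I would check that the image of \( \Phi_p^{r,k}(E,\hat{\n},\n) \) equals the space of functionals \( \o\mapsto \sum (\p^I \o^J)(p)\,c_{I,J} \), with \( |I|\leq r \), computed in a fixed chart and local frame. The inclusion ``\( \subseteq \)'' holds because expanding \( \n^j \) in coordinates gives a differential operator of order \( \leq j \) with smooth coefficients. The reverse inclusion follows by induction on the order: the top-order part of \( \Phi_p(e_S\boxtimes\a) \) is \( \o\mapsto\p^{\langle S\rangle}\o(p)(\a) \), and the lower-order remainder is already in the image by induction. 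Lemma \ref{lem:monomial} supplies the needed nondegeneracy. Since this description of the image involves no connection, \( \U(E;p) \) is intrinsic.

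The second step is to note that the defining identity uses only the wedge product on \( C^\i(\wedge(E^*)) \) and the evaluation pairing of currents with sections. Both depend only on the smooth structure of \( M \) and \( E \). The co-product is determined by this identity because \( \U_a(E;p)\otimes\U_b(E;p) \) injects into the dual of \( C^\i(\wedge^a(E^*))\otimes C^\i(\wedge^b(E^*)) \): it is a tensor product of finite-dimensional (in each filtration degree) spaces of functionals on the respective factors. Hence \( \D_p \) is intrinsic. The counit \( \e_p(T)=T(1) \) is intrinsic by the same reasoning, and so is the filtration by order.

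Finally, I would pass to \( \G^{finite}(\U(E)) \). A finitely supported section is a finite sum of elements of the fibers \( \U(E;p) \). Its co-product is the sum of the pointwise co-products, so the global co-product is intrinsic too. The only step with real content is the injectivity in the second step, which guarantees that \( \D_p \) is determined by duality. It reduces to the elementary fact that for finite-dimensional spaces of linearly independent functionals \( \{T_i\} \) and \( \{S_j\} \), the functionals \( T_i\otimes S_j \) are linearly independent on algebraic tensors. I expect no serious obstacle there.
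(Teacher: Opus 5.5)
Your proposal is correct and follows essentially the paper's argument. The co-product is pinned down by the connection-free identity \( (\D_p T)(\o\otimes\eta)=T(\o\wedge\eta) \), together with the fact that elements of \( \U_a(E;p)\otimes\U_b(E;p) \) are separated by test pairs \( \o\otimes\eta \); the paper phrases this as a contradiction argument and invokes Hahn--Banach where, as you note, finite-dimensional linear algebra suffices. Your first step, checking that \( \U_k^r(E;p) \) itself does not depend on \( \hat{\n} \) or \( \n \), fills in a point the paper only asserts after the definition of \( \Phi_p \) and uses implicitly in its proof.
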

\begin{proof}
	Given two torsion-free connections, \( \hat{\n} \) and \( \hat{\n}' \) on \( M \) and connections \( \n \) and \( \n' \) on \( E \), let \( \D_p \) and \( \D_p' \) be the co-algebra structures resulting from Theorem \ref{thm:co-alg}. If \( y=(\D_p -\D_p') (x)\neq 0 \) for some \( x\in \U(E,p) \), write \( y=\sum_{i=1}^N y_1^i\otimes y_2^i \), with \( (y_1^i)_{i=1}^N \) and \( (y_2^i)_{i=1}^N \) linearly independent. In particular, by Hahn-Banach, we can find \( \o \) and \( \eta \in C^\i(\wedge(E^*)) \), such that \( \o \) is non-zero on \( y_1^1 \) but vanishes on \( y_1^i \) for \( i>1 \), and \( \eta \) is non-zero on \( y_2^1 \) but vanishes on \( y_2^i \) for \( i>1 \). Thus, \( (\o \otimes \eta) (y)\neq 0 \), but on the other hand \[ (\o \otimes \eta) (y)= (\o \otimes \eta) ( \D_p x) - (\o \otimes \eta) (\D_p x), \] and by Theorem \ref{thm:co-alg}, both the terms on the right hand side are equal to \( (\o\wedge \eta) (x) \), a contradiction.
\end{proof}

We will conclude this section with two Poincar\'e-Birkhoff-Witt-type theorems. The first, Theorem \ref{thm:PBW1} will give us an explicit basis of \( ker(\Phi_p) \), and the second, Theorem \ref{thm:PBW2}, an explicit basis of \( \U(E;p) \). We will show in the next section that \( \sqcup_{p\in M} \U(E;p) \) has the structure of a smooth filtered and graded vector bundle, and using Theorem \ref{thm:PBW1}, we will be able to conclude that if \( M \) and \( E \) are orientable, then this filtration is by oriented vector bundles.

We first turn our attention to the kernel of \( \Phi_p \). By Lemma \ref{lem:fundamental} and Corollary \ref{cor:pullback} we have the following twisted commutation relation:
\begin{lem}\label{lem:modifiedsweedler}
	If \( i, j\geq 0 \), \( u\in \otimes^i(T_pM) \), \( v\in \otimes^j(T_pM) \), \( a,b\in T_pM \), and \( \a\in \wedge(E_p) \), then
	\begin{align}
	u(ab-ba)v \boxtimes \a &+ u_{(1')}\otimes \left[\n_{\tilde{u}_{(2')}}((\tilde{a}\tilde{b}-\tilde{b}\tilde{a})\tilde{v})\right]_p\boxtimes \a +\notag \\ 
	&u_{(1)}\otimes\left[\n_{\tilde{u}_{(2)}} \tilde{v}\right]_p \boxtimes \left(\n_{\tilde{u}_{(3)}}R^E\right)_{\n_{\tilde{u}_{(4)}}(\tilde{a}\tilde{b})}(\a) +
	u_{(1)}\otimes\left(\n_{\tilde{u}_{(2)}} R^{TM}\right)_{\n_{\tilde{u}_{(3)}}(\tilde{a}\tilde{b})}\left(\n_{\tilde{u}_{(4)}} \tilde{v}\right)_p\boxtimes \a\label{twisted}
	\end{align}
	is in the kernel of \( \Phi_p \), where \( u_{(1')} \) and \( u_{(2')} \) are the Sweedler notation factors of \( \D u -  u\otimes 1\) (i.e. all the factors of \( \D u \) except for \( u\otimes 1 \).) 
\end{lem}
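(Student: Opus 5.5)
The plan is to evaluate \( \Phi_p \) of the element \eqref{twisted} against an arbitrary \( \o\in C^\i(\wedge(E^*)) \) and show the result vanishes, using the Fundamental Commutation Lemma (Lemma \ref{lem:fundamental}) applied to the bundle \( \wedge(E^*) \) with \( u,v,a,b \) replaced by smooth extensions \( \tilde u,\tilde v,\tilde a,\tilde b \). By tensoriality, the value at \( p \) does not depend on the choice of extensions. Evaluating \eqref{eq:fundamental} on \( \o \) at \( p \) and pairing with \( \a \) expresses
\[ \left(\n_{\tilde u_{(1)}\hat\n_{\tilde u_{(2)}}((\tilde a\tilde b-\tilde b\tilde a)\tilde v)}\o\right)_p(\a) \]
as a difference of a curvature term in \( R^{\wedge(E^*)} \) and a term in \( R^{TM} \).

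The first step is to split the Sweedler sum on the left according to whether \( u_{(2)} \) has degree zero. The summand \( u\otimes 1 \) contributes exactly \( \Phi_p(u(ab-ba)v\boxtimes\a)(\o) \). The remaining summands contribute \( \Phi_p\bigl(u_{(1')}\otimes[\n_{\tilde u_{(2')}}((\tilde a\tilde b-\tilde b\tilde a)\tilde v)]_p\boxtimes\a\bigr)(\o) \), directly from the definition \eqref{eq:Phip}. The last term on the right of \eqref{eq:fundamental} is, again by definition, \( \Phi_p \) of the fourth summand of \eqref{twisted} evaluated on \( \o \), and it carries the sign needed to move it to the left-hand side.

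The second step handles the \( R^E \) term. On the right side of \eqref{eq:fundamental} this term reads \( (\n_{\tilde u_{(1)}}R^{\wedge E^*})_{\n_{\tilde u_{(2)}}(\tilde a\tilde b)}(\n_{\tilde u_{(3)}\hat\n_{\tilde u_{(4)}}\tilde v}\o) \) evaluated at \( p \) and paired with \( \a \). I will use Corollary \ref{cor:pullback} with \( G=M\times\R \) to transpose this covariantly differentiated curvature endomorphism acting on forms into the covariantly differentiated curvature acting on \( \wedge(E) \). This uses the fact that the curvature on \( \wedge(E^*) \) is the negative transpose of that on \( \wedge(E) \), both extended as derivations, so the sign will need care. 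After transposing, the pairing becomes \( (\n_{\tilde u_{(3)}\hat\n_{\tilde u_{(4)}}\tilde v}\o)_p\bigl((\n_{\tilde u_{(1)}}R^E)_{\n_{\tilde u_{(2)}}(\tilde a\tilde b)}\a\bigr) \). By coassociativity and cocommutativity bookkeeping (relabeling the Sweedler indices), this is \( \Phi_p \) of the third summand of \eqref{twisted}.

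Collecting the terms gives \( \Phi_p(\text{\eqref{twisted}})(\o)=0 \) for every \( \o \), which is the claim. The main obstacle is the sign and index bookkeeping in the second step. Two things need checking there: that the dual curvature on \( \wedge(E^*) \) really produces the \( + \) sign shown in \eqref{twisted}, by comparison with Example \ref{ex:kernel0}, which is the case \( i=j=0 \); and that the ordering of Sweedler factors of \( u \) matches after transposition. I will verify the sign first in the case \( i=j=0 \) against Example \ref{ex:kernel0}, and then argue that the general case follows by the same transposition applied termwise.
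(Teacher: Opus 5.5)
Your proposal is correct and is essentially the paper's own argument. The paper obtains the lemma directly from Lemma~\ref{lem:fundamental}, applied to $\wedge(E^*)$ and paired with $\alpha$ at $p$. It uses Corollary~\ref{cor:pullback} to rewrite $\nabla^j R^{\wedge E^*}$ as $-(\nabla^j R^{E})^*$, which yields exactly the $+$ sign in the third summand, and cocommutativity of the tensor co-product to reorder the Sweedler factors.

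One minor correction: tensoriality only makes the terms independent of the extension $\tilde u$. The individual summands do depend on $\tilde a,\tilde b,\tilde v$, but this is harmless because the identity holds for any fixed choice of extensions.
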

Here, the precise element \eqref{twisted} depends on the choice of extensions \( \tilde{a}, \tilde{b} \) and \( \tilde{v} \), but not the extension \( \tilde{u} \) of \( u \). Any two such elements differ by an element of the kernel of \( \Phi_p \) of tensor order \( < i + j + 2 \).

Let \( \cal{N}\times \R^d \to \pi^{-1}(\cal{N}) \) be a trivialization of \( E \) in a coordinate neighborhood \( \cal{N} \) of \( p \), let \( e^i:\cal{N}\to \R \), \( i=1,\dots,n \) be coordinates for \( M \) in \( \cal{N} \), let \( e_i\in C^\i(T\cal{N}) \) be the corresponding coordinate tangent vector fields, let \( e_I\in C^\i(\otimes(T\cal{N})) \) be the tensor field corresponding to a word \( I \) with letters in \( \{1,\dots,n\} \), and let \( \e_K \) be the coordinate \( k \)-vector field in \( \wedge^k(\pi^{-1}(\cal{N})) \) corresponding to an order-\( k \) subset \( K \) of \( \{1,\dots,d\} \).
\begin{defn}\label{def:kerbasis}
	Given words \( I \) and \( J \) with letters in \( \{1,\dots,n\} \), \( 1\leq i,j\leq n  \), and a subset \( K \) of \( \{1,\dots,d\} \), let \( E_{I,i,j,J,K} \) be the element of \( \otimes(T_pM) \boxtimes \wedge^k(E_p) \) given by
	\begin{align*}
		\left(e_{I_{(1)}}\right)_p\otimes \left(\left(\n_{e_{I_{(2)}}}e_i\right)_p\otimes \left(\n_{e_{I_{(3)}}}e_j\right)_p - \left(\n_{e_{I_{(2)}}}e_j\right)_p\otimes \left(\n_{e_{I_{(3)}}}e_i\right)_p\right)\otimes \left(\n_{e_{I_{(4)}}}e_J\right)_p \boxtimes \left(\e_K\right)_p &+\\
		\left(e_{I_{(1)}}\right)_p\otimes \left(\n_{e_{I_{(2)}}} e_J \right)_p \boxtimes \left(\n_{e_{I_{(3)}}}R^E\right)_{\left(\n_{e_{I_{(4)}}}e_i\right)\otimes\left(\n_{e_{I_{(5)}}}e_j\right)}\left(\e_K\right)_p &+\\
		\left(e_{I_{(1)}}\right)_p\otimes\left(\n_{e_{I_{(2)}}} R^{TM}\right)_{\left(\n_{e_{I_{(3)}}}e_i\right)\otimes\left(\n_{e_{I_{(4)}}}e_j\right)}\left(\n_{e_{I_{(5)}}} e_J\right)_p\boxtimes \left(\e_K\right)_p&.
	\end{align*}
\end{defn} 

\begin{thm}\label{thm:PBW1}
	Let \( r\geq 0 \) and \( 0\leq k\leq d \). The set \( B \) of elements \( E_{I,i,j,J,K} \) where \( 1\leq i<j\leq n \), the words \( I \) and \( J \) range over the words in \( \{1,\dots,n\} \) such that \( |I| + |J| + 2 \leq r \), and \( K \) ranges over the order-\( k \) subsets of \( \{1,\dots,d\} \), forms a basis of the kernel of \( \Phi_p^{r,k} \).
	
	Moreover, this basis has an ordering given by the lexicographical ordering of the words \( IijJK \).
\end{thm}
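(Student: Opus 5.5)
The plan has three parts: show each $E_{I,i,j,J,K}$ lies in $ker(\Phi_p^{r,k})$, show $B$ spans the kernel by a leading-term reduction, and then address linear independence, which I expect to be the real obstacle.

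\emph{Membership.} Each $E_{I,i,j,J,K}$ is exactly the element \eqref{twisted} of Lemma \ref{lem:modifiedsweedler}, with $u=(e_I)_p$, $a=(e_i)_p$, $b=(e_j)_p$, $v=(e_J)_p$, $\a=(\e_K)_p$, and with the coordinate fields $e_i,e_j,e_J$ as the extensions. To see the identification, I would expand $\n_{e_{I_{(2')}}}((e_ie_j-e_je_i)e_J)$ by Proposition \ref{prop:leibniz}. The $u\otimes 1$ summand of $\D u$ gives the term $e_I(e_ie_j-e_je_i)e_J$, and the remaining summands give the other Leibniz pieces; together these make up the first line of Definition \ref{def:kerbasis}. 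The second and third lines are the curvature terms of \eqref{twisted}. Hence $B\subset ker(\Phi_p^{r,k})$. The total tensor order of each element is $|I|+|J|+2\le r$, so it lies in the domain of $\Phi_p^{r,k}$.

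\emph{Spanning.} I would filter $\otimes^{\le r}(T_pM)\boxtimes\wedge^k(E_p)$ by tensor order. Modulo terms of order $<|I|+|J|+2$, the leading term of $E_{I,i,j,J,K}$ is $e_I(e_ie_j-e_je_i)e_J\boxtimes\e_K$. The idea is that these leading terms let one reduce any element of the domain to a combination of sorted words.
\begin{itemize}
\item Given $x$ in the kernel, repeatedly subtract multiples of elements of $B$ to sort the letters of each top-order word into weakly increasing order. Each step swaps one adjacent descent and lowers the lexicographic rank of the word.
\item This produces a decomposition $x=b+y$, where $b\in\mathrm{span}(B)$ and $y$ is supported on $\boxtimes$-products of sorted words with $\e_K$.
\item The proof of Theorem \ref{thm:algebra} shows that $\Phi_p$ is injective on the span of sorted words. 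There one takes the top-degree term of such an element and evaluates it on the monomial $\frac{1}{\bar I!}(e-p)^{\bar I}\o$, using Lemma \ref{lem:monomial}; for $\Phi_p^{r,k}$ only $|S|\le|T|$ is needed, so no flatness hypothesis is required.
\item Since $\Phi_p(y)=\Phi_p(x)-\Phi_p(b)=0$, injectivity gives $y=0$, so $B$ spans the kernel.
\end{itemize}
The same argument shows that $\dim\U_k^r(E;p)=\dim S^{\le r}(T_pM)\cdot\binom dk$.

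\emph{Independence (the main obstacle).} Here I would order $B$ by the lexicographic order of the words $IijJK$ and try to show that the leading terms are triangular with respect to this order. Before relying on that, I would compare $|B|$ with
\[
\dim\ker=\binom dk\sum_{m\le r}\Bigl(n^m-\binom{n+m-1}{m}\Bigr).
\]
The number of triples $(I,i<j,J)$ with $|I|+|J|=m-2$ is $(m-1)\binom n2 n^{m-2}$. For $n=2$, $m=3$ both counts equal $4$. For $n=2$, $m=4$, however, the count of triples is $12$ while the kernel contribution is $11$. The leading terms of $E_{I,i,j,J,K}$ for different positions of the commutator in the same word are then linearly dependent, for example $(ab-ba)c+b(ac-ca)\dots$ relations among commutators.

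So the first thing I would try is to verify triangularity directly. If the count mismatch persists, I expect that the intended reading restricts $I$ (for instance to weakly increasing words whose last letter is $\le i$, i.e.\ commutators placed at the first descent). The proof above goes through verbatim for such a restricted family: it gives a spanning set of exactly the right size, and hence a basis.
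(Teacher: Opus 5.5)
Your count is correct, and it settles the question outright. Every \(E_{I,i,j,J,K}\) lies in \(\ker\Phi_p^{r,k}\), and distinct index tuples give distinct elements, because the two top-order words \(IijJ\) and \(IjiJ\) recover the tuple. Moreover
\[
|B|=\binom{d}{k}\sum_{m=2}^{r}(m-1)\binom{n}{2}n^{m-2}\quad\text{exceeds}\quad \dim\ker\Phi_p^{r,k}=\binom{d}{k}\sum_{m\le r}\left(n^m-\binom{n+m-1}{m}\right)
\]
whenever \(n\ge 3,\ r\ge 3\) or \(n\ge 2,\ r\ge 4\); the excess in order \(3\) is \(\binom{n}{3}\). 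So \(B\) is linearly dependent, and the theorem is false as stated. You should conclude this directly rather than hedge with ``if the count mismatch persists''; no triangularity argument can exist.

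The paper's proof breaks exactly at its claim that the top-order terms \(e_I(e_ie_j-e_je_i)e_J\boxtimes\epsilon_K\) are ``part of a basis'' of the kernel of \(\otimes^{\le r}(T_pM)\to S^{\le r}(T_pM)\). Its ``spanning by dimension counting'' then has nothing to stand on.

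Replace your garbled example \((ab-ba)c+b(ac-ca)\dots\) with an explicit relation. With \(c=e_1e_2-e_2e_1\), one has \(c\,(e_1e_2)-c\,(e_2e_1)-(e_1e_2)\,c+(e_2e_1)\,c=0\), since both halves equal \(c\otimes c\). For \(n\ge 3\), the Jacobi identity \([[e_1,e_2],e_3]+[[e_2,e_3],e_1]+[[e_3,e_1],e_2]=0\) in the tensor algebra already gives a relation in order \(3\).

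Your spanning argument, on the other hand, is sound, and it is a better route than the paper's. You resolve adjacent descents modulo lower order, then use Lemma \ref{lem:monomial} to get injectivity of \(\Phi_p\) on sorted words; that lemma needs only \(|S|\le|T|\), so no flatness is required. This shows directly that \(B\) spans and that \(\dim\U_k^r(E;p)=\binom{d}{k}\dim S^{\le r}(T_pM)\), without appealing to the independence that fails.

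The one real error is in your repaired index set. Requiring \(I\) to be weakly increasing with last letter \(\le i\) is too restrictive. For \(n=2\), \(r=3\) it yields \(4\binom{d}{k}\) elements against \(\dim\ker\Phi_p^{3,k}=5\binom{d}{k}\); no element of that family has a nonzero coefficient on the unsorted word \(2\,2\,1\).

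The condition that actually means ``commutator at the first descent'' is: \(I\) weakly increasing with last letter \(\le j\) (still \(i<j\)), so that \(I\,j\,i\,J\) has its first descent at \(j\,i\). These tuples are in bijection with the unsorted words of length \(|I|+|J|+2\). Your reduction only ever uses them, provided you always resolve the first descent. They are independent because the top-order part of the element indexed by \(IjiJ\) is \((e_{IijJ}-e_{IjiJ})\boxtimes\epsilon_K\), and \(IijJ\) has exactly one fewer inversion. Hence ordering by inversion number is triangular, and this restricted family is a basis of \(\ker\Phi_p^{r,k}\).
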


\begin{proof}
	By Lemma \ref{lem:modifiedsweedler} and our assumption that \( |I| + |J| + 2 \leq r \), the set \( B \) is a subset of the kernel of \( \Phi_p^{r,k} \). The set \( B \) is linearly independent by bootstrapping: Given a linear relation \( \cal{R} \) given by \[ \l_{I_1,i_1,j_1,J_1,K_1} E_{I_1,i_1,j_1,J_1,K_1} + \cdots + \l_{I_s,i_s,j_s,J_s,K_s} E_{I_s,i_s,j_s,J_s,K_s}=0, \] each element \( E_{I_h,i_h,j_h,J_h,K_h} \) in which \( |I_h|+|J_h| \) is maximized within the relation \( \cal{R} \) has a term of maximal tensor order within \( \cal{R} \), namely
	\begin{align*}
		(e_{I_h})_p\otimes \left((e_{i_h})_p\otimes(e_{j_h})_p - (e_{j_h})_p\otimes(e_{i_h})_p\right)\otimes (e_{J_h})_p \boxtimes (\e_{K_h})_p,
	\end{align*}
	occurring when \( (e_{I_h})_{(1)}=e_{I_h} \). Moreover, these terms are \emph{all} such maximal tensor order terms in \( \cal{R} \). Therefore, \( \cal{R} \) must induce a relation between them. However, such elements are part a basis of the kernel of the natural projection \( \otimes^{\leq r}(T_pM) \to S^{\leq r}(T_pM) \), and so in particular are linearly independent. Thus, the coefficients \( \l_{I_h,i_h,j_h,J_h,K_h} \) are zero, and we may repeat the process, given that \( |I_h|+|J_h| \) is now strictly smaller.
	
	The set \( B \) spans by dimension counting: we know the quotient of \( \otimes^{\leq r}(T_pM) \boxtimes \wedge^k(E_p) \) by the kernel of \( \Phi_p^{r,k} \) is isomorphic to \( \U_k^r(E;p) \), and the latter is isomorphic to \( S^{\leq r}(\R^n) \boxtimes \wedge^k (\R^d) \), via the trivialization \( \pi^{-1}(\cal{N})\simeq \R^n\times \R^d \).
\end{proof}

By Theorem \ref{thm:PBW1}, the basis elements \( E_{I,i,j,J,K} \) transform in the following manner under a change-of-basis to new coordinates \( \bar{e}^1,\dots,\bar{e}^n \) and new trivialization \( \bar{\e} \) of \( E \):
\begin{align}\label{eq:changeofbasis}
	E_{I,i,j,J,K} &= \left[ \frac{\p \bar{e}^L}{\p e^I} \cdot\left( \n_{\bar{e}_{L_{(1)}}}\left(\frac{\p\bar{e}^s}{\p e^i}\right)\cdot\n_{\bar{e}_{L_{(2)}}}\left(\frac{\p\bar{e}^t}{\p e^j}\right) \right. \right. \\
	 & \qquad\qquad \left. \left. - \n_{\bar{e}_{L_{(1)}}}\left(\frac{\p\bar{e}^t}{\p e^i}\right)\cdot\n_{\bar{e}_{L_{(2)}}}\left(\frac{\p\bar{e}^s}{\p e^j}\right) \right) \cdot\n_{\bar{e}_{L_{(3)}}}\left(\frac{\p\bar{e}^M}{\p e^J}\right)\cdot\frac{\p\bar{\e}^N}{\p \e^K} \right]_p \bar{E}_{L_{(4)},s,t,M,N},\nonumber
\end{align}
where \( \bar{E}_{L_{(4)},s,t,M,N} \) refers to the basis element corresponding to the new coordinates and the Einstein sum is taken with the restriction that \( s<t \). This follows from Proposition \ref{prop:leibniz} and rearranging terms. 

We also have the following Poincar\'e-Birkhoff-Witt theorem for \( \U_k^r(E;p) \). Let \( \cal{N}_n^r \) denote the set of words \( W=w_1\cdots w_r \) of length \( \leq r \) with letters \( w_i \) in \( \{1,\dots,n\} \) such that \( w_i\leq w_j \) if \( i\leq j \).

\begin{thm}\label{thm:PBW2}
	The set \( B \) of linear functionals on \( C^\i(\wedge^k (E^*)) \) of the form
	\[
	\o \mapsto (\n_{e_I}\o)_p (e_K),
	\]
	where \( I \) ranges over \( \cal{N}_n^r \) and \( K \) ranges over the order \( k \) subsets of \( \{1,\dots, d\} \), is a basis of \( \U_k^r(E;p) \). Moreover, \( B \) is ordered by the lexicographical order of \( IK \). 
\end{thm}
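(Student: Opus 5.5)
The plan is to show that \( B \) has exactly \( \dim \U_k^r(E;p) \) elements, that every element of \( B \) lies in \( \U_k^r(E;p) \), and that \( B \) is linearly independent. Each element \( \o\mapsto (\n_{e_I}\o)_p(e_K) \) with \( |I|\leq r \) is \( \Phi_p^{r,k}((e_I)_p\boxtimes (\e_K)_p) \), so it lies in \( \U_k^r(E;p) \). For the count, the proof of Theorem \ref{thm:PBW1} already gives \( \U_k^r(E;p)\simeq S^{\leq r}(\R^n)\boxtimes\wedge^k(\R^d) \). The nondecreasing words in \( \cal{N}_n^r \) biject with the multi-indices \( T \) with \( |T|\leq r \) via \( I\mapsto \langle I\rangle \). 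So \( |B| \) equals the dimension, and it remains only to prove linear independence.

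For independence I will use a triangularity argument with test forms built from Lemma \ref{lem:monomial}. Take \( \e^K \) to be the dual coframe of the trivialization, so that \( \e^K(\e_{K'})=\d_{K,K'} \). For a multi-index \( T \) with \( |T|\leq r \) and an order-\( k \) subset \( K' \), set \( \o_{T,K'}=\frac{(e-p)^T}{T!}\e^{K'} \). By Lemma \ref{lem:monomial}, whenever \( |I|\leq|T| \),
\[ (\n_{e_I}\o_{T,K'})_p(e_K)=\d_{\langle I\rangle,T}\,\d_{K,K'}. \]
When \( |I|>|T| \), the Leibniz rule (Proposition \ref{prop:leibniz}) shows that only the terms in which every factor of \( (e-p)^T \) receives a derivative survive at \( p \). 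The pairing is therefore some unknown number \( c \), and it only appears in entries where \( |I|>|T| \).

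Order the functionals by \( |I| \) and the test forms by \( |T| \). With this ordering the pairing matrix between \( B \) and the family \( \{\o_{T,K'}\} \) is block triangular, with identity diagonal blocks: the block \( |I|=|T| \) is the identity, since distinct nondecreasing words have distinct \( \langle I\rangle \), and the blocks \( |I|<|T| \) vanish. Hence the matrix is invertible and \( B \) is linearly independent. Concretely, in a relation \( \sum \l_{I,K} b_{I,K}=0 \), pair with \( \o_{\langle I_0\rangle,K_0} \) for an \( I_0 \) of maximal length among those with \( \l\neq 0 \). All other terms vanish, which forces \( \l_{I_0,K_0}=0 \). This is the same maximal-degree argument used in the proof of Theorem \ref{thm:algebra}, and it does not need asymptotic flatness, because only the top-order term is tested.

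The main obstacle is just checking that the off-diagonal entries with \( |I|>|T| \) cause no trouble. In the elimination above only terms with \( |I|\leq |I_0| \) occur, so they never enter, and I would state this explicitly. The ordering claim is then just a statement of how the elements are indexed: lexicographic order on \( IK \), compatible with the triangular structure once words are grouped by length.
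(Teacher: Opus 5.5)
Your proof is correct, but it runs in the opposite direction from the paper's.

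The paper proves that \( B \) \emph{spans} \( \U_k^r(E;p) \). By the Fundamental Commutation Lemma (Lemma \ref{lem:fundamental}), adjacent letters of \( I \) in \( \Phi_p((e_I)_p\boxtimes(\e_K)_p) \) can be swapped at the cost of curvature terms of strictly lower tensor order. Induction on \( |I| \) then reduces every word to a nondecreasing one. Linear independence follows from the count \( |B|=\dim S^{\leq r}(\R^n)\boxtimes\wedge^k(\R^d) \).

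You instead prove \emph{independence} directly, through the unitriangular pairing with the monomial test forms of Lemma \ref{lem:monomial}. You then get spanning from the same dimension count.

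What each route buys:
\begin{itemize}
\item Your route never uses curvature or the commutation relations. It produces an explicit triangular dual family, which is the same device the paper later uses in Proposition \ref{prop:vb} and Lemma \ref{lem:varying} to read off coefficients.
\item The paper's route yields a rewriting procedure that expresses an arbitrary \( \Phi_p((e_I)_p\boxtimes(\e_K)_p) \) in terms of \( B \).
\end{itemize}

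The two halves are complementary. Spanning gives \( \dim\U_k^r(E;p)\leq|B| \) and your independence gives \( \dim\U_k^r(E;p)\geq|B| \). Together they would prove the theorem outright, without the identification \( \U_k^r(E;p)\simeq S^{\leq r}(\R^n)\boxtimes\wedge^k(\R^d) \) on which both proofs currently rely. The paper asserts that identification only via a trivialization and the claimed independence of \( \U_k^r(E;p) \) from the connections.

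One small point you should make explicit: \( \frac{(e-p)^T}{T!}\e^{K'} \) is defined only on the chart. Multiply it by a bump function equal to \( 1 \) near \( p \). This changes none of the pairings, since each functional in \( B \) depends only on the jet of the form at \( p \).
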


\begin{proof}
	The set \( B \) spans \( \U_k^r(E;p) \) by Lemma \ref{lem:fundamental} and Proposition \ref{prop:surjective}, and is linearly independent by dimension counting, since \( \U_k^r(E;p) \) is isomorphic to \( S^{\leq r}(\R^n) \boxtimes \wedge^k (\R^d) \), and the latter space has a canonical basis with the same cardinality as \( B \).
\end{proof}

Theorem \ref{thm:PBW2} gives a co-algebra isomorphism, depending on a choice of ordered basis of \( T_pM \) and torsion-free connection \( \n \), between \( S^{\leq r} (T_p M) \boxtimes \wedge^k (E_p) \) and \( \U_k^r(E;p) \), since the co-product preserves lexicographic order. If \( M \) and \( E \) are asymptotically flat at \( p \), then this isomorphism is independent of the basis, and is given by \( \left[ \Phi_p^{r,k} \right] : S^{\leq r} (\R^n) \boxtimes \wedge^k (\R^d) \to \U_k^r(E;p)  \).

\section{Bundle Structure of \( \U \)}
To recap, for each \( p\in M \) we have a co-algebra structure \( (\D_p,\e_p) \) on \( \U_\bullet^r(E;p) \) intrinsic to the bundle \( E \) and a surjective co-algebra homomorphism 
\[ \Phi_p: \otimes(T_pM)\boxtimes \wedge(E_p) \to \U(E;p), \] which depends on a connection \( \n \) on \( E \) and a torsion-free connection \( \hat{\n} \) on \( TM \). We begin the process of stitching together this pointwise data into bundles and bundle maps.

\begin{defn}
	Let \( \R^{n,d} \) denote the trivial bundle \( \R^n\times \R^d \to \R^n \). 
\end{defn}

\begin{prop}\label{prop:vb}
	The set \( \U_k^r(E):=\bigsqcup_{p\in M} \U_k^r(E;p) \) has the structure of a smooth vector bundle over \( M \), filtered\footnote{The filtration on \( \U_\bullet^\star(E) \) by \( \star \) does not induce a \( \Z/2 \) grading in general.} by \( r \) and graded by \( k \), with fiber \( \U_k^r(E;p) \) isomorphic to \( S^{\leq r}(\R^n) \boxtimes \wedge^k(\R^d) \).
\end{prop}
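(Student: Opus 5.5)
The idea is to build local trivializations of \( \U_k^r(E) \) from the basis in Theorem \ref{thm:PBW2}, and then check that the transition maps are smooth. Over a coordinate neighborhood \( \cal{N} \) with coordinates \( (e^i) \) and a trivialization \( (\e_K) \) of \( E \), define
\[
\psi_{\cal{N}}: \cal{N}\times \left(S^{\leq r}(\R^n)\boxtimes\wedge^k(\R^d)\right) \to \bigsqcup_{q\in\cal{N}}\U_k^r(E;q)
\]
by sending \( (q, x_I\boxtimes \e_K) \) to the functional \( \o\mapsto (\n_{e_I}\o)_q(\e_K) \), where \( I\in\cal{N}_n^r \) is the ordered word associated to the monomial \( x_I \). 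By Theorem \ref{thm:PBW2}, applied at every \( q\in\cal{N} \) with the same coordinates, \( \psi_{\cal{N}} \) is a linear isomorphism on each fiber. It is therefore a bijection onto \( \bigsqcup_{q\in\cal{N}}\U_k^r(E;q) \). The fibers of distinct points are disjoint as sets, because a nonzero element of \( \U_k^r(E;q) \) has support exactly \( \{q\} \).

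The main step is smoothness of the transition maps. Take a second chart \( \bar{\cal{N}} \) with coordinates \( \bar e \) and trivialization \( \bar\e \), and fix \( q\in \cal{N}\cap\bar{\cal{N}} \). I would express each basis functional \( \o\mapsto(\n_{e_I}\o)_q(\e_K) \) in terms of the barred basis in the following stages.
\begin{enumerate}
\item Expand \( e_I=\frac{\p \bar e^L}{\p e^I}\,\bar e_L \) and \( \e_K=\frac{\p\bar\e^N}{\p\e^K}\,\bar\e_N \). By tensoriality of \( \n^{|I|} \) (the \( i=0 \) case of Proposition \ref{prop:mega}), the coefficients are products of the smooth Jacobian entries, and no derivatives of them appear.
\item The resulting words \( L \) are in general not ordered. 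Reorder them using the kernel relations of Lemma \ref{lem:modifiedsweedler}, or equivalently the basis of Theorem \ref{thm:PBW1} together with its transformation law \eqref{eq:changeofbasis}. Each transposition replaces \( \bar e_L\boxtimes\bar\e_N \) by terms of strictly lower tensor order. Their coefficients are polynomials in \( \n^l R^E \), \( \hat\n^l R^{TM} \) and the higher Christoffel symbols in the barred frame, all evaluated at \( q \). These are smooth functions of \( q \).
\item Since the tensor order strictly decreases at each step, the reduction terminates after finitely many steps, with a number of steps bounded independently of \( q \).
\end{enumerate}
The output is a matrix of smooth functions of \( q \) expressing the unbarred basis in the barred one. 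Hence \( \psi_{\bar{\cal{N}}}^{-1}\circ\psi_{\cal{N}} \) is smooth, and by symmetry its inverse is smooth too. This gives a smooth vector bundle structure with fiber \( S^{\leq r}(\R^n)\boxtimes\wedge^k(\R^d) \).

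For the filtration and grading, the grading by \( k \) is by definition: we take separate bundles, or their direct sum. For \( r'\leq r \), the chart \( \psi_{\cal{N}} \) restricts on \( \U_k^{r'} \) to the same construction with \( \cal{N}_n^{r'}\subset\cal{N}_n^r \). Thus \( \U_k^{r'}(E) \) sits inside \( \U_k^r(E) \) as a sub-bundle, and it is preserved by the transition maps. The transition maps are also triangular with respect to tensor order, since step (2) only lowers order.

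I expect the main obstacle to be keeping control of step (2): one has to see that the reordering coefficients really are smooth in \( q \), and not merely defined pointwise. I would handle this by noting that every ingredient is a fixed universal polynomial in the covariant derivatives of the curvatures and in the Christoffel data of the fixed charts, all evaluated at \( q \). An alternative that avoids the bookkeeping is to use the identification \( \U_k^r(E)\cong J^r(\wedge^kE^*)^* \). Under it the charts above are dual to the standard jet charts, which are smooth, and smoothness of the transitions follows.
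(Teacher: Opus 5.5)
Your proof is correct, but it takes a different route from the paper's.

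\textbf{The paper's route.} The paper does not build charts from the given connections. Over a chart \( (\psi_\a,\eta_\a) \) it uses the flat Euclidean connections transported by \( \xi_\a \). For these, \( \Phi \) factors through \( S^{\leq r} \) by Theorem \ref{thm:algebra}. So the chart sends \( x_I\boxtimes e_K \) to the \( x^I \)-derivative at \( p \) of the \( e_K \)-component of \( \o \), and no ordering of words is needed. The transition coefficients are obtained in closed form by evaluating on the test forms \( \frac{1}{J!}(y-p)^J df^L \) and using Lemma \ref{lem:euclidean}:
\[ a^{J,L}=\Bigl[\frac{\p^{|I|}}{\p x^I}\Bigl(\frac{1}{J!}(y-p)^J\frac{\p f^L}{\p e^K}\Bigr)\Bigr]_p . \]
This is visibly smooth in \( p \) and vanishes for \( |J|>|I| \). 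Curvature and higher Christoffel symbols never appear.

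\textbf{Your route.} You keep \( \n,\hat{\n} \) fixed and use the basis of Theorem \ref{thm:PBW2}, so you must straighten unordered words with Lemma \ref{lem:fundamental}. This is heavier but sound. The lemma is an identity of differential operators on the whole chart, with the barred coordinate fields as canonical extensions. Hence all coefficients are smooth functions, and the recursion has a shape independent of \( q \). One precision: induct on the pair (order, number of inversions). A transposition itself keeps the order; only the correction terms lower it.

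\textbf{What your route buys.} Your charts are adapted to \( \Phi \) itself.
\begin{enumerate}
\item They land in \( \U^r_k(E;q) \) by definition. The paper's charts land there only through the asserted independence of \( \U^r_k(E;p) \) from the connections.
\item Smoothness of \( \Phi^{r,k}(E,\hat{\n},\n) \) is essentially built in.
\item Triangularity in order, and hence the filtration, is explicit.
\end{enumerate}

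\textbf{Two small remarks.} The support argument for disjointness is unnecessary. The \( \bigsqcup \) is a tagged union, and the zero functional lies in every fiber anyway. In your jet-bundle alternative, it is the paper's flat charts, not your PBW charts, that are literally dual to the standard jet charts. Yours differ from them by a smooth triangular change of basis.
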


\begin{defn}
	We denote the bundle \( \U_\bullet^\star(TM) \) by \( \U_\bullet^\star(M) \).
\end{defn}

\begin{proof}[Proof of Proposition \ref{prop:vb}]
	Given an open cover of \( M \) by coordinate charts \( (\psi_\a:\cal{N}_\a \to \R^n)_{\a\in \A} \) together with trivializations \( (\eta_\a: \cal{N}_\a \times \R^d \to \pi^{-1}(\cal{N}_\a))_{\a\in \A} \) of \( E \), we specify transition data \( (g_{\b\a}: \cal{N}_\a\cap \cal{N}_\b \to Gl(S^{\leq r}(\R^n) \boxtimes \wedge^k(\R^d)))_{\a,\b\in \A} \) as follows: 
	
	For \( \a\in \A \), let \( \xi_\a: \R^{n,d} \to \pi^{-1}(\cal{N}_\a) \) be the map \( \eta_\a \circ (\psi_\a^{-1} \times Id) \). This is a bundle map covering \( \psi_\a^{-1} \).
	
	For \( \a\in \A \) and \( p\in \cal{N}_\a \), let \( G_\a(p) \) be the composition of maps 
	\begin{equation*}
		S^{\leq r}(\R^n) \boxtimes \wedge^k(\R^d) \xlongrightarrow{} S^{\leq r} (T_{\psi_\a(p)}\R^n) \boxtimes \wedge^k(\R^{n,d}_{\psi_\a(p)}) \xlongrightarrow{F_\a(p)} \U_k^r(\R^{n,d}; \psi_\a(p)) \xlongrightarrow{(\xi_\a)_*} \U_k^r(E;p)
	\end{equation*}
	where the first map is induced by the trivial identifications \( \R^n \simeq T_{\psi_\a(p)}\R^n \) and \( \R^d \simeq \R^{n,d}_{\psi_\a(p)} \), the second map \( F_\a(p):=[\Phi_{\psi_\a(p)}^{r,k}(\R^{n,d},\hat{\n}^{\R^n},\n^{\R^{n,d}})] \), where \( \hat{\n}^{\R^n} \) is the Euclidean connection on \( \R^n \), \( \n^{\R^{n,d}} \) is the Euclidean connection on \( \R^{n,d} \), and the third map is pushforward\footnote{This is defined over a general bundle \( E \) the same way as it is over \( TM \), i.e. for a current: Given a bundle map \( \theta: E\to F \), for \( x\in \U_k(E;p) \) and \( \o\in C^\i(F^*) \), define \( \theta_* x (\o):= x(\theta^* \o) \), where \( \theta^*(\o)(\a):= \o(\wedge^k(\theta)(\a)) \). } by \( \xi_\a \).
	
	Given \( \a,\b\in A \) and a point \( p\in \cal{N}_\a\cap \cal{N}_\b \), let \( g_{\b\a}(p)=  G_\b(p)^{-1} \circ G_\a(p). \) This transition data trivially satisfies the \v{C}ech cocycle condition and does not depend on the choice of cover of \( M \) or trivializations of \( E \). Smoothness of the map \( g_{\b\a} \) follows from explicit calculation of the matrix coefficients for \( g_{\b\a} \), which we undertake now. 
	
	Suppose that the \( \cal{N}_\a \) (resp. \( \cal{N}_\b \)) coordinates are labeled \( (x^i)_{i=1}^n \) (resp. \( (y^i)_{i=1}^n \)) and that the coordinates for \( E \) over \( \cal{N}_\a \) (resp. \( \cal{N}_\b \)) induced by \( \eta_\a \) (resp. \( \eta_\b \)) are labeled \( (e^i)_{i=1}^d \) (resp. \( (f^i)_{i=1}^d \).) Fix a basis element \( x_I\boxtimes e_K \) of \( S^{\leq r}(\R^n) \boxtimes \wedge^\bullet(\R^d) \). Our goal is to prove that the coefficients \( a^{J,L} \) such that
	\begin{equation}\label{eq:transitioncoeffs}
		a^{J,L}\cdot G_p^r(\b)(y_J\boxtimes f_L)= G_p^r(\a)(x_I\boxtimes e_K)
	\end{equation} 
	are smooth. Both sides of \eqref{eq:transitioncoeffs} can be evaluated against an element \( \o \) of \( C^\i(\wedge^k (E^*)) \), and this evaluation depends only on the value of \( \o \) near \( p \). Write \( \o=h_M df^M \) near \( p \), where \( df^i \) is the covector field dual to \( f_i \). Fix a multi-index \( A \) in \( n \) variables and an anti-symmetric index \( B \) in \( d \) variables. We have \[ G_p^r(\b)(y_A\boxtimes f_B)(\o) = \left[\frac{\p^{|A|}}{\p y^A} h_B\right]_p. \] On the other hand, \( \o=h_M \frac{\p f^M}{\p e^W} de^W \), and
	\[
	G_p^r(\a)(x_I\boxtimes e_K)(\o) = \left[\frac{\p^{|I|}}{\p x^I}\left(h_M \frac{\p f^M}{\p e^K}\right)\right]_p,
	\]
	so \eqref{eq:transitioncoeffs} becomes
	\begin{equation}\label{eq:basis1}
		a^{A,B}\cdot\left[\frac{\p^{|A|}}{\p y^A} h_B\right]_p = \left[\frac{\p^{|I|}}{\p x^I}\left( h_M \frac{\p f^M}{\p e^K}\right)\right]_p.
	\end{equation}
	
	Let \( \o=\frac{1}{J!}(y-p)^J \cdot df^L \). Here, \( (y-p)^J \) is the monomial centered at \( p \) with indeterminants in \( y \) from the multi-index \( J \), and \( J! \) is the coefficient resulting from differentiating the monomial \( |J| \) times with respect to these indeterminants. For this choice of form \( \o \), \eqref{eq:basis1} reduces by Lemma \ref{lem:euclidean} to
	\begin{equation}\label{eq:basis2}
		a^{J,L}=\left[\frac{\p^{|I|}}{\p x^I}\left(\frac{1}{J!}(y-p)^J \frac{\p f^L}{\p e^K}\right)\right]_p.
	\end{equation}
	We conclude from \eqref{eq:basis2} that the matrix coefficients \( a^{J,L} \) of \( g_{\b\a} \) vary smoothly with \( p \).
\end{proof}

\begin{defn}\label{def:bundhom}
	The maps \( \Phi_p^{r,k} \), \( \D_p \) and \( \e_p \) combine together to form smooth vector bundle homomorphisms

	\[ \Phi^{r,k} = \Phi^{r,k}(E,\hat{\n},\n) : \otimes^{\leq r}(TM) \boxtimes \wedge^k(E) \to \U_k^r(E), \]

	\[ \D = \D(E) : \U_\bullet(E) \to \U_\bullet(E) \otimes_\R \U_\bullet(E), \] and \[ \e = \e(E): \U_\bullet(E)\to \U_0^0(E)\simeq M\times\R, \]
	
 	where \( \Phi^{r,k}\lfloor_{\otimes^{\leq r} (T_p M) \boxtimes \wedge^k(E_p)} = \Phi_p^{r,k} \), \( \D\lfloor_{\U_\bullet(E;p)}=\D_p \), and \( \e\lfloor_{\U_\bullet(E;p)}=\e_p \).
\end{defn}

That \( \Delta \) and \( \e \) are smooth bundle maps will follow once we know \( \Phi^{r,k} \) is a smooth bundle map, so we will show this first. In fact,

\begin{defn}
	If \( \widetilde{\hat{\n}}=({\hat{\n}}^p)_{p\in M} \) is a family of torsion-free connections on \( M \) and \( \widetilde{\n}=(\n^p)_{p\in M} \) is a family of connections on \( E \), let \[ \Phi^{r,k}(E,\widetilde{\hat{\n}},\widetilde{\n}): \otimes^{\leq r}(TM) \boxtimes \wedge^k(E) \to \U_k^r(E) \] be the map defined by \( \Phi^{r,k}(E,\widetilde{\hat{\n}},\widetilde{\n})\lfloor_{\otimes^{\leq r}(T_p M) \boxtimes \wedge^k(E_p)} = \Phi_p^{r,k}(E,\n^p,\widetilde{\n}^p) \).

\end{defn}

\begin{lem}\label{lem:varying}
	For such families \( \widetilde{\hat{\n}} \) and \( \widetilde{\n} \), the function \( \Phi^{r,k}(E,\widetilde{\hat{\n}},\widetilde{\n}) \) is a smooth bundle map.
\end{lem}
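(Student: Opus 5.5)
Smoothness is local, so I would check it in the local frames used to build the bundle structure in the proof of Proposition \ref{prop:vb}. Fix a chart \( (\cal{N}_\a,\psi_\a) \) with coordinates \( (x^i) \) and a trivialization \( \eta_\a \) of \( E \) with frame \( (e_i) \). These give a smooth local frame \( x_I\boxtimes e_K \) of \( \otimes^{\leq r}(TM)\boxtimes\wedge^k(E) \), with \( I \) ranging over words. They also give the local frame \( G_\a(\cdot)(x_J\boxtimes e_L) \) of \( \U_k^r(E) \), with \( J \) ranging over multi-indices. The lemma then amounts to showing that the coefficients \( c_{I,K}^{J,L}(p) \) in
\[ \Phi_p^{r,k}(E,\hat{\n}^p,\n^p)(x_I\boxtimes e_K) = c_{I,K}^{J,L}(p)\, G_\a(p)(x_J\boxtimes e_L) \]
are smooth in \( p \). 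Here I take the families \( \widetilde{\hat{\n}} \) and \( \widetilde{\n} \) to be smooth families, meaning that their Christoffel symbols are jointly smooth in \( (p,q) \); this is the implicit hypothesis.

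To extract the coefficients, I would test both sides against the forms \( \o_{J,L}^p=\frac{1}{J!}(x-p)^J\, de^L \), exactly as in the derivation of \eqref{eq:basis2}. By Lemma \ref{lem:euclidean}, evaluated in the flat model \( \R^{n,d} \), these are dual to the frame \( G_\a(p)(x_J\boxtimes e_L) \). This gives
\[ c_{I,K}^{J,L}(p) = \Bigl[\left(\n^p\right)^{|I|}_{x_I}\Bigl(\tfrac{1}{J!}(x-p)^J\, de^L\Bigr)\Bigr]_p(e_K). \]
Next I would expand the right-hand side with the Leibniz rule (Proposition \ref{prop:leibniz}) and the inductive formula \eqref{eq:higher}. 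The result is a universal polynomial expression in three kinds of quantities, all evaluated at \( q=p \): the coordinate derivatives of the Christoffel symbols of \( \hat{\n}^p \) and \( \n^p \) at \( q \); the derivatives of \( (x-p)^J \) at \( q \), which are polynomials in \( x(q)-x(p) \); and the constant components of \( de^L \) in the frame. Equivalently, one can use the higher-order Christoffel symbols \( \G_{\ I,j}^k \) and their recursive formula.

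Every ingredient is a smooth function of \( (p,q) \), so its restriction to the diagonal \( q=p \) is smooth in \( p \). Hence each \( c_{I,K}^{J,L} \) is smooth, and \( \Phi^{r,k}(E,\widetilde{\hat{\n}},\widetilde{\n}) \) is a smooth bundle map. Linearity on fibers holds because each \( \Phi_p^{r,k} \) is linear, and the map covers the identity by construction.

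The main obstacle is the bookkeeping that shows the expansion really is polynomial in finitely many jets of the connection coefficients, with no hidden dependence on the extensions \( \tilde v_i \). I would handle this by always extending \( v\in T_pM \) through the coordinate fields \( x_I \), which is legitimate by tensoriality. I would then run an induction on \( |I| \) using \eqref{eq:higher}: each step differentiates once and multiplies by Christoffel symbols, so the smoothness of every term follows by induction. The case of a single fixed pair of connections, which is what Definition \ref{def:bundhom} needs, is the special case of constant families.
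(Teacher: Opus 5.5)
Your proposal is correct and follows the paper's proof essentially step for step. You work in the local frames from Proposition~\ref{prop:vb}, extract the coefficients by testing against \( \frac{1}{J!}(x-p)^J\,de^L \) via Lemma~\ref{lem:euclidean}, and deduce smoothness from the smooth dependence of \( \n^p \) on \( p \); your expansion into jets of the Christoffel symbols, jointly smooth in \( (p,q) \) and then restricted to the diagonal, spells out what the paper compresses into one sentence citing Proposition~\ref{prop:mega}, and you are right that the smoothness of the families is an implicit hypothesis.
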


\begin{proof}
	Let \( \psi: \cal{N} \to \R^n \) be a coordinate chart on \( M \), let \( \eta: \cal{N} \times \R^d \to \pi^{-1}(\cal{N}) \) be a trivialization of \( E \) over \( \cal{N} \), and let \( \xi:\R^{n,d} \to \pi^{-1}(\cal{N}) \) be the bundle map \( \xi=\eta \circ (\psi^{-1} \times Id) \). We have, using the notation in the proof of Proposition \ref{prop:vb}, a trivialization of \( \U_k^r(E) \) over \( \cal{N} \), given at \( p\in \cal{N} \) by \[ \U_k^r(E;p) \xlongrightarrow{\xi_*^{-1}} \U_k^r(\R^{n,d}, \psi(p)) \xlongrightarrow{[\Phi_{\psi(p)}^{r,k}(\R^{n,d},\hat{\n}^{\R^n},\n^{\R^{n,d}})]^{-1}} S^{\leq r}(T_{\psi(p)}\R^n) \boxtimes \wedge^k(\R^{n,d}_{\psi(p)}) \xlongrightarrow{\simeq} S^{\leq r}(\R^n) \boxtimes \wedge^k(\R^d). \] Likewise, the bundle \( \otimes^{\leq r}(TM) \boxtimes \wedge^k(E)  \) is trivialized over \( \cal{N} \) by \[ \otimes^{\leq r}(T_pM) \boxtimes \wedge^k(E_p) \xlongrightarrow{\otimes^{\leq r}(T\psi)\boxtimes\wedge^k(\xi^{-1})} \otimes^{\leq r}(T_{\psi(p)}\R^n) \boxtimes \wedge^k (\R^{n,d}_{\psi(p)}) \xlongrightarrow{\simeq} \otimes^{\leq r}(\R^n) \boxtimes \wedge^k(\R^d). \] Let \[ F: \otimes^{\leq r}(\R^n) \boxtimes \wedge^k(\R^d) \to S^{\leq r}(\R^n) \boxtimes \wedge^k(\R^d) \] denote the composition of \( \Phi^{r,k}(E,\widetilde{\hat{\n}},\widetilde{\n}) \) with these trivializations. Fixing a basis element \( x_I\boxtimes e_K \) of \( \otimes^{\leq r}(\R^n) \boxtimes \wedge^k(\R^d) \), we have coefficients \( a^{J, L} \) such that \[ F(x_I \boxtimes e_K) = a^{J,L} x_J\boxtimes e_L\in S^{\leq r}(\R^n) \boxtimes \wedge^k(\R^d). \]
	
	By the defintion of \( \Phi^{r,k}(E,\widetilde{\hat{\n}},\widetilde{\n}) \), we have\footnote{The orders of the higher covariant derivatives that follow are suppressed.} \[ (\n^p_{x_I}\o)_p(e_K) = a^{J,L}(\n_{x_J}^\xi \o)_p(e_L) \] for any \( \o\in C^\i(\wedge^k E^*) \), where \( \n^\xi \) is the connection on \( \pi^{-1}(\cal{N}) \) induced by \( \xi \) from the Euclidean connection on \( \R^{n,d} \). In particular, fixing an \( n \)-dimensional multi-index \( J \) and a \( d \)-dimensional anti-symmetric index \( L \) and letting \( \o=\frac{1}{J!}(x-p)^J \cdot de^L \), by Lemma \ref{lem:euclidean},
	\begin{equation}\label{eq:smoothly}
		(\n^p_{x_I}\frac{1}{J!}(x-p)^J \cdot de^L)_p(e_K) = a^{J,L}.
	\end{equation}
	Since \( \n^p \) varies smoothly with \( p \), by Proposition \ref{prop:mega} so too do the higher covariant derivatives, and since \( \frac{1}{J!}(x-p)^J \cdot de^L \) also varies smoothy with \( p \), it follows from \eqref{eq:smoothly} that \( a^{J,L} \) varies smoothly with \( p \), hence \( F \) is smooth.
\end{proof}

By setting \( \hat{\n}^p=\hat{\n} \) and \( \n^p=\n \) for all \( p\in M \), we conclude
\begin{cor}
	The map \( \Phi^{r,k}(E,\hat{\n},\n) \) is a smooth bundle map of \( r \)-filtered \( k \)-graded bundles.
\end{cor}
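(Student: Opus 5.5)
The corollary is the special case of Lemma~\ref{lem:varying} in which both families are constant: take \( \hat{\n}^p=\hat{\n} \) and \( \n^p=\n \) for every \( p\in M \). A constant family is trivially smooth in \( p \), and each \( \hat{\n}^p \) is torsion-free. By Definition~\ref{def:bundhom}, the resulting map \( \Phi^{r,k}(E,\widetilde{\hat{\n}},\widetilde{\n}) \) agrees fiberwise with \( \Phi^{r,k}_p(E,\hat{\n},\n) \). Lemma~\ref{lem:varying} then says directly that \( \Phi^{r,k}(E,\hat{\n},\n) \) is a smooth bundle map \( \otimes^{\leq r}(TM)\boxtimes\wedge^k(E)\to\U^r_k(E) \). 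For the smooth structure on the target I use the local trivializations from the proof of Proposition~\ref{prop:vb}.

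What remains is that this map respects the \( r \)-filtration and the \( k \)-grading.

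\emph{Grading.} For fixed \( k \), \( \Phi^{r,k} \) sends \( \otimes^{\leq r}(TM)\boxtimes\wedge^k(E) \) into \( \U^r_k(E) \). This is because \( \Phi^k_p \) pairs \( \wedge^k(E_p) \) only against \( k \)-forms \( C^\i(\wedge^k(E^*)) \). So the total map \( \Phi=\bigoplus_k\Phi^{\cdot,k} \) is a direct sum of graded pieces.

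\emph{Filtration.} If \( r\le r' \), the inclusion \( \otimes^{\leq r}\subset\otimes^{\leq r'} \) is compatible with \( \Phi \). The reason is that \( \Phi^{r,k}_p \) is by definition the restriction of \( \Phi^k_p \), and \( \U^r_k(E;p) \) is its image, which sits inside \( \U^{r'}_k(E;p) \). Hence \( \Phi^{r',k} \) restricted to \( \otimes^{\leq r}(TM)\boxtimes\wedge^k(E) \) equals \( \Phi^{r,k} \) followed by the inclusion \( \U^r_k(E)\hookrightarrow\U^{r'}_k(E) \).

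The one point needing care is that this inclusion is itself a smooth sub-bundle inclusion. I would check it in the charts of Proposition~\ref{prop:vb}. There the trivializations \( G_\a(p) \) for \( r \) and \( r' \) are both built from the Euclidean \( \Phi \) composed with \( (\xi_\a)_* \). They therefore intertwine the inclusion with the standard linear inclusion \( S^{\leq r}(\R^n)\boxtimes\wedge^k(\R^d)\subset S^{\leq r'}(\R^n)\boxtimes\wedge^k(\R^d) \), which is constant in \( p \). With that, the corollary follows.
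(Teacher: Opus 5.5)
Your proposal is correct and follows the paper's own argument: the paper also obtains the corollary by specializing Lemma~\ref{lem:varying} to the constant families \( \hat{\n}^p=\hat{\n} \), \( \n^p=\n \). You additionally check, in the charts of Proposition~\ref{prop:vb}, that the filtration inclusions \( \U^r_k(E)\hookrightarrow\U^{r'}_k(E) \) are smooth; this step is sound and is a detail the paper leaves implicit.
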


The kernel \( \cal{K}^{r,k} = \cal{K}^{r,k}(E,\hat{\n},\n) \) of \( \Phi^{r,k}(E,\hat{\n},\n) \) is a smooth sub-bundle of \( \otimes^{\leq r}(TM) \boxtimes \wedge^k(E) \) since \( \widetilde{\Phi} \) has constant (full) rank, so we may take a quotient by \( \cal{K}^{r,k} \), and
\begin{cor}\label{cor:quot}
	\[ \left[\otimes^{\leq r}(TM) \boxtimes \wedge^k(E)\right] / \cal{K}^{r,k}(E,\hat{\n},\n) \simeq \U_k^r(E). \]
\end{cor}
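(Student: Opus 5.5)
The statement to prove is Corollary \ref{cor:quot}: the quotient of \( \otimes^{\leq r}(TM)\boxtimes\wedge^k(E) \) by \( \cal{K}^{r,k} \) is isomorphic, as a smooth vector bundle, to \( \U_k^r(E) \). The plan is to apply the first isomorphism theorem for vector bundles to the smooth bundle map \( \Phi^{r,k}=\Phi^{r,k}(E,\hat{\n},\n) \). The preceding corollary shows that \( \Phi^{r,k} \) is a smooth bundle map covering the identity of \( M \). By definition, \( \U_k^r(E;p) \) is the image of \( \Phi_p^{r,k} \), so \( \Phi^{r,k} \) is surjective on every fiber. It therefore has constant rank, equal to \( \dim S^{\leq r}(\R^n)\cdot\binom{d}{k} \) by Proposition \ref{prop:vb}.

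The first step is to confirm that \( \cal{K}^{r,k} \) is a smooth sub-bundle. Work locally in the trivializations from the proof of Lemma \ref{lem:varying}, where \( \Phi^{r,k} \) becomes a smooth family of surjective matrices \( F(p) \). Near any \( p_0 \), choose a right inverse of \( F(p_0) \) and extend it constantly; \( F(p) \) composed with this extension stays invertible near \( p_0 \), which gives a smooth local splitting. This exhibits \( \ker F(p) \) as the image of a smooth constant-rank projection, so it is a smooth sub-bundle.

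Alternatively, Theorem \ref{thm:PBW1} gives explicit local frames. The elements \( E_{I,i,j,J,K} \), built from coordinate fields at the varying point \( p \), are smooth sections over the chart and form a basis of the kernel at each \( p \). Their smoothness follows because every ingredient (covariant derivatives of coordinate fields, \( R^E \), \( R^{TM} \)) is smooth.

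With \( \cal{K}^{r,k} \) a sub-bundle, the quotient \( Q=[\otimes^{\leq r}(TM)\boxtimes\wedge^k(E)]/\cal{K}^{r,k} \) is a smooth vector bundle. Its local trivializations come from complements to \( \cal{K}^{r,k} \), for instance the span of the elements \( e_I\boxtimes\e_K \) with \( I\in\cal{N}_n^r \), in the spirit of Theorem \ref{thm:PBW2}. The map \( \Phi^{r,k} \) kills \( \cal{K}^{r,k} \), so it descends to a bundle map \( [\Phi^{r,k}]:Q\to\U_k^r(E) \). This map is smooth because smoothness of a map out of a quotient can be checked on a local complement, where it equals the restriction of \( \Phi^{r,k} \). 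On each fiber it is a linear isomorphism, being injective by construction and surjective as noted above.

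A smooth bundle map covering the identity that is a fiberwise isomorphism has a smooth inverse, since in local frames its matrix inverse depends smoothly on \( p \) by Cramer's rule. Hence \( [\Phi^{r,k}] \) is an isomorphism of smooth vector bundles. The only real obstacle is the sub-bundle claim, and the constant-rank argument (or the explicit kernel frame of Theorem \ref{thm:PBW1}) handles it.
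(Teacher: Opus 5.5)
Your proof is correct and is essentially the paper's argument. Just before the corollary, the paper notes that \( \Phi^{r,k}(E,\hat{\n},\n) \) is a smooth bundle map of constant (full) rank, so \( \cal{K}^{r,k} \) is a smooth sub-bundle, and then concludes from surjectivity; you supply the local-splitting and smooth-inverse details it leaves implicit, and you rightly avoid the appeal to Theorem \ref{thm:algebra} in the paper's proof paragraph, since identifying the kernel with that of the projection onto \( S^{\leq r}(TM)\boxtimes\wedge^k(E) \) requires that theorem's curvature-vanishing hypotheses (as for the families in Corollary \ref{cor:unnatural}) and fails for a general pair \( (\hat{\n},\n) \).
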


\begin{proof}
	By Theorem \ref{thm:algebra}, the kernel of \( \Phi^{r,k}(E,\widetilde{\hat{\n}},\widetilde{\n}) \) is identical to the kernel of the natural projection \( \otimes^{\leq r}(TM) \boxtimes \wedge^k(E) \to S^{\leq r}(TM) \boxtimes \wedge^k(E) \). Since \( \Phi^{r,k}(E,\widetilde{\hat{\n}},\widetilde{\n}) \) is surjective, the result follows.
\end{proof}

\begin{defn}\label{def:coprodbox}
	The co-algebra structures on \( \otimes^\star(T_p M)\boxtimes \wedge^\bullet(E_p) \) for \( p\in M \) stitch together to form smooth bundle maps
	\begin{align*}
		\D_\otimes : \otimes^\star(TM)\boxtimes \wedge^\bullet(E) \to \left(\otimes^\star (TM)\boxtimes \wedge^\bullet(E)\right) \otimes \left(\otimes^\star(TM)\boxtimes \wedge^\bullet(E)\right)
	\end{align*}
	and 
	\begin{align*}
		\e_\otimes: \otimes^\star(TM)\boxtimes \wedge^\bullet(E) \to \otimes^0(TM)\boxtimes \wedge^0(E)\simeq M\times \R.
	\end{align*}
\end{defn}

\begin{cor}
	The functions \( \D:\U_\bullet(E) \to \U_\bullet(E) \otimes \U_\bullet(E) \) and \( \e: \U_\bullet(E)\to \U_0^0(E)\simeq M\times\R \) given by Definition \ref{def:bundhom} are smooth bundle maps. Moreover, \[ \D\circ \Phi(E,\hat{\n},\n)= \Phi(E,\hat{\n},\n)^{\otimes 2}\circ \D_\otimes,  \] and \[ \e\circ \Phi(E,\hat{\n},\n)=\Phi(E,\hat{\n},\n)\circ \e_\otimes. \]
\end{cor}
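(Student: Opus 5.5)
The plan is to derive both identities fiberwise from Theorem \ref{thm:co-alg}, and then obtain smoothness of \( \D \) and \( \e \) from the fact that \( \Phi = \Phi(E,\hat{\n},\n) \) is a smooth surjective bundle map of constant rank.

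\emph{Fiberwise identities.} At each \( p\in M \), equation \eqref{eq:coalghomo} gives \( \D_p\circ\Phi_p = (\Phi_p\otimes\Phi_p)\circ\D \), where \( \D \) is the co-product on \( \otimes(T_pM)\boxtimes\wedge(E_p) \). By Definition \ref{def:coprodbox}, this co-product is exactly \( \D_\otimes \) restricted to the fiber over \( p \). Likewise, the proof of Theorem \ref{thm:co-alg} shows that \( \e_p\circ\Phi_p \) is the co-unit of \( \otimes(T_pM)\boxtimes\wedge(E_p) \), which is \( \e_\otimes \) restricted to that fiber. Under the identification \( \U_0^0(E)\simeq M\times\R\simeq \otimes^0(TM)\boxtimes\wedge^0(E) \), the map \( \Phi \) on the degree-\( (0,0) \) piece is the identity, because \( \n^0_1 \) is the identity. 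Hence \( \e\circ\Phi=\Phi\circ\e_\otimes \). Since these identities hold on every fiber, they hold as identities of maps of sets between the total spaces.

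\emph{Smoothness.} Fix \( r \) and \( k \), and work on the finite-rank pieces \( \otimes^{\leq r}(TM)\boxtimes\wedge^k(E) \) and \( \U_k^r(E) \). By Theorem \ref{thm:co-alg}, \( \D \) respects the filtration, so \( \D \) maps \( \U^r_k(E) \) into \( \bigoplus_{a+b=k}\U_a^r(E)\otimes\U_b^r(E) \). By the preceding corollary, \( \Phi^{r,k} \) is a smooth surjective bundle map, so its kernel \( \cal{K}^{r,k} \) is a smooth sub-bundle, and Corollary \ref{cor:quot} identifies \( \U_k^r(E) \) with the quotient bundle. Locally we choose a smooth complement \( C \) of \( \cal{K}^{r,k} \) (for instance its orthogonal complement for an auxiliary fiber metric). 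Then \( \Phi|_C: C\to\U_k^r(E) \) is a smooth isomorphism of vector bundles with a smooth inverse \( \sigma \). On this neighborhood the first identity gives
\[ \D=(\Phi\otimes\Phi)\circ\D_\otimes\circ\sigma, \]
and the second gives \( \e=\Phi\circ\e_\otimes\circ\sigma \). Both right-hand sides are compositions of smooth bundle maps, so \( \D \) and \( \e \) are smooth. Note that \( \D_\otimes \) and \( \e_\otimes \) are smooth because the tensor and wedge co-products are given by constant matrices in any local frame.

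\emph{Expected obstacle.} The step that needs the most care is the local smooth section \( \sigma \). Its existence relies on \( \Phi^{r,k} \) having constant rank, which holds because every fiber \( \U_k^r(E;p) \) has the same dimension by Proposition \ref{prop:vb}. The remaining care is bookkeeping: checking that the filtration and grading are compatible, so that \( \D \) lands in a finite-rank bundle where the argument above applies.
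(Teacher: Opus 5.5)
Your proposal is correct and follows essentially the same route as the paper. You read off the two identities fiberwise from Theorem \ref{thm:co-alg}, and you get smoothness of \( \D \) and \( \e \) by factoring them through the smooth, fiberwise surjective (hence constant-rank) map \( \Phi \); your local smooth right inverse \( \sigma \), built from a complement of \( \cal{K}^{r,k} \), is simply an explicit form of the paper's one-line appeal to \( \Phi \) being a smooth surjective submersion.
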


\begin{proof}
	By Theorem \ref{thm:co-alg}, \( \D_\otimes \) (resp. \( \e_\otimes \)) descends via \( \Phi \) to \( \D \) (resp. \( \e \),) and since \( \Phi \) is a smooth surjective submersion, it follows that \( \D \) and \( \e \) are also smooth. The two equations both follow immediately from Theorem \ref{thm:co-alg}.
\end{proof}

Now suppose \( \widetilde{\n}=(\hat{\n}^p)_{p\in M} \) is a family of connections on \( E \) and \( \widetilde{\hat{\n}}=(\hat{\n}^p)_{p\in M} \)) is a family of torsion-free connections on \( TM \), such that \( E \) and \( M \) are both asymptotically flat in a neighborhood of \( p \) for each \( p\in M \) (such families trivially exist as \( \hat{\n}^p \) and \( \n^p \) can defined to be flat on a neighborhood \( p \) and extended to the rest of \( M \) using a partition of unity.) 
\begin{cor}\label{cor:unnatural}
	The bundle map \( \Phi^{r,k}(E,\widetilde{\hat{\n}},\widetilde{\n}) \) descends to an isomorphism of smooth vector bundles \[ [\Phi^{r,k}(E,\widetilde{\hat{\n}},\widetilde{\n})]: S^{\leq r}(TM) \boxtimes \wedge^k(E) \to \U_k^r(E).  \]
\end{cor}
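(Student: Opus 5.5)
The plan is to work fiberwise and then invoke smoothness from Lemma \ref{lem:varying}. Fix \( q\in M \). By hypothesis, \( M \) and \( E \) are asymptotically flat at \( q \) with respect to \( \hat{\n}^q \) and \( \n^q \): these connections are flat on a neighborhood of \( q \), so all covariant derivatives of \( R^{TM} \) and \( R^E \) vanish at \( q \). Theorem \ref{thm:algebra}, applied with the connections \( (\hat{\n}^q,\n^q) \), then gives two facts about \( \Phi_q(E,\hat{\n}^q,\n^q) \). First, it factors through \( S(T_qM)\boxtimes\wedge(E_q) \). Second, it descends to an isomorphism \( S(T_qM)\boxtimes\wedge^\bullet(E_q)\to\U_\bullet(E;q) \).

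Both the quotient map and \( \Phi_q \) preserve tensor order and the exterior degree \( k \). So restricting to \( \otimes^{\leq r} \) and \( \wedge^k \) yields a linear map \( [\Phi_q^{r,k}]: S^{\leq r}(T_qM)\boxtimes\wedge^k(E_q)\to\U_k^r(E;q) \). This restricted map is injective, being the restriction of an injective map. It is also surjective, because \( \U_k^r(E;q) \) is by definition the image of \( \Phi^{r,k}_q \) and \( \otimes^{\leq r}\to S^{\leq r} \) is onto. As a check, both sides have the dimension of \( S^{\leq r}(\R^n)\boxtimes\wedge^k(\R^d) \) by Proposition \ref{prop:vb}.

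Next I would show that the pointwise maps assemble into a smooth bundle map. By Lemma \ref{lem:varying}, the map \( \Phi^{r,k}(E,\widetilde{\hat{\n}},\widetilde{\n}) \) is a smooth bundle map. By the previous step, its fiberwise kernel equals the kernel of the natural projection \( \pi:\otimes^{\leq r}(TM)\boxtimes\wedge^k(E)\to S^{\leq r}(TM)\boxtimes\wedge^k(E) \), which is a smooth surjective bundle map of constant rank. Locally, choose a smooth splitting \( \s \) of \( \pi \), for instance via symmetrization \( S^{\leq r}\hookrightarrow\otimes^{\leq r} \). Then \( [\Phi]=\Phi\circ\s \) is a composition of smooth bundle maps and is independent of the splitting, since any two splittings differ by something in \( \ker\pi=\ker\Phi \). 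Hence \( [\Phi] \) is smooth.

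Finally, \( [\Phi] \) is a smooth vector bundle map that is a linear isomorphism on every fiber. Its inverse is therefore smooth, because matrix inversion is smooth in local trivializations, and \( [\Phi] \) is an isomorphism of smooth vector bundles.

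The main subtlety I expect is the hypothesis check. The family \( \widetilde{\hat{\n}} \) is only pointwise-indexed, so at each \( q \) I must use the single connection \( \hat{\n}^q \) (with \( \n^q \)) when applying Theorem \ref{thm:algebra}, rather than any global connection. I also need the smoothness in \( q \) provided by Lemma \ref{lem:varying}. That lemma implicitly requires \( \hat{\n}^p \) and \( \n^p \) to depend smoothly on \( p \), which the partition-of-unity construction should arrange; I would state this assumption explicitly.
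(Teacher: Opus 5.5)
Your proposal is correct and follows essentially the same route as the paper. The paper's terse argument (printed as the proof of Corollary \ref{cor:quot}) likewise uses Theorem \ref{thm:algebra} to identify the fiberwise kernel with the kernel of the projection to \( S^{\leq r}(TM)\boxtimes\wedge^k(E) \), together with surjectivity and the smoothness from Lemma \ref{lem:varying}; your symmetrization splitting, the smoothness of the inverse, and your flag that \( \hat{\n}^p,\n^p \) must vary smoothly in \( p \) (which the paper's proof of Lemma \ref{lem:varying} uses implicitly) simply make explicit what the paper leaves unstated.
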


\subsection{Orientation of \( \U \)}


If \( E \) and \( M \) are oriented, then \( \cal{K}^{r,k}(E,\hat{\n},\n) \) is oriented, since the change-of-basis matrix for \( \cal{K}^{r,k} \) under a change of coordinates is, by \eqref{eq:changeofbasis}, block-upper triangular, whose diagonal terms are the same as those for the kernel of the natural projection \( \otimes^r (TM) \boxtimes \wedge^k (E) \to S^r (TM) \boxtimes \wedge^k (E) \). It follows that \( \U_k^r(E,\hat{\n},\n) \) is also oriented.

\subsection{Morphisms of \( \U \)}
Suppose \( F\to N \) is a smooth vector bundle and \( \Theta: E\to F \) is a homomorphism covering \( \theta: M\to N \). The \emph\textbf{{pushforward}} \( \Theta_* : \U_k^r(E;p) \to \U_k^r(F;\theta(p)) \) is defined: For \( T\in \U_k^r(E;p) \) and \( \o\in C^\i(\wedge^k(F^*)) \), let \( \Theta_* T (\o):= T(\wedge^k(\Theta^*)\circ \o) \). This defines a bundle map \( \Theta_*: \U_k^r(E)\to \U_k^r(F) \) and turns \( \U \) into a functor. Moreover, if \( \o\in C^\i(\wedge^k(F^*)) \) and \( \eta\in C^\i(\wedge^j(F^*)) \), then \( \wedge^{k+j}(\Theta^*)\circ (\o\wedge\eta) = (\wedge^{k}(\Theta^*)\circ (\o))\wedge (\wedge^{j}(\Theta^*)\circ (\eta)) \) (i.e. pullback commutes with wedge product.) This implies,  

\begin{prop}\label{prop:pushforward}
	The bundle maps \( \Theta_* \) and \( \Delta \) commute.
\end{prop}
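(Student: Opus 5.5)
We will verify the identity \( \D_{\theta(p)}\circ \Theta_* = (\Theta_*\otimes\Theta_*)\circ \D_p \) fiberwise, as maps \( \U(E;p)\to \U(F;\theta(p))\otimes \U(F;\theta(p)) \). Both sides are linear, so it suffices to check them on a single element. Once this holds at every \( p \), the bundle maps commute, because \( \Theta_* \) and \( \D \) are defined fiberwise (Definition \ref{def:bundhom} and the definition of pushforward).

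First we note that \( \Theta_* \) sends \( \U_k^r(E;p) \) into \( \U_k^r(F;\theta(p)) \) and that the pairing is well posed. To compare two elements of \( \U(F;\theta(p))\otimes\U(F;\theta(p)) \), we evaluate them against \( \o\otimes\eta \) with \( \o\in C^\i(\wedge^a(F^*)) \) and \( \eta\in C^\i(\wedge^b(F^*)) \). This test determines the element by the same Hahn--Banach/separation argument used in the proof of Corollary \ref{cor:co-alg}: choose linearly independent representatives and separating forms. Hence we only need to check equality after pairing with all such \( \o\otimes\eta \).

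The computation is then a chain of dualities. Let \( T\in \U_{a+b}(E;p) \) and set \( \Theta^\sharp:=\wedge(\Theta^*) \). We have
\begin{align*}
(\o\otimes\eta)\bigl(\D_{\theta(p)}\Theta_*T\bigr) &= (\Theta_*T)(\o\wedge\eta) = T\bigl(\Theta^\sharp(\o\wedge\eta)\bigr) = T\bigl(\Theta^\sharp\o\wedge\Theta^\sharp\eta\bigr)\\
&= (\Theta^\sharp\o\otimes\Theta^\sharp\eta)(\D_pT) = (\o\otimes\eta)\bigl((\Theta_*\otimes\Theta_*)\D_pT\bigr).
\end{align*}
These steps use, in order: the defining property of \( \D \) from Theorem \ref{thm:co-alg}; the definition of \( \Theta_* \); the fact, stated just before the proposition, that pullback commutes with wedge product; Theorem \ref{thm:co-alg} again, now at \( p \); and the definition of \( \Theta_*\otimes\Theta_* \) applied to the simple tensors in a Sweedler expansion \( \D_pT=T_{(1)}\otimes T_{(2)} \). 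The co-unit is handled the same way: \( \e(\Theta_*T)=T(\Theta^\sharp 1)=T(1)=\e(T) \).

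The main obstacle is making sure \( \Theta^\sharp\o \) is a legitimate smooth section of \( \wedge(E^*) \) over \( M \) when \( \Theta \) only covers \( \theta \). Concretely, \( (\Theta^\sharp\o)_q = \wedge(\Theta_q)^*\,\o_{\theta(q)} \), which is smooth because \( \Theta \) and \( \theta \) are smooth. Since \( T \) is supported at \( p \), only the germ at \( p \) matters. A second point to check is that the identity between these pullbacks holds pointwise in \( q \), so the wedge compatibility is just the naturality of \( \wedge \) under the linear maps \( \Theta_q \). With these two points in place, the chain above closes the argument.
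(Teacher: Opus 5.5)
Your proposal is correct and takes essentially the paper's approach. The paper justifies the proposition only by the remark that \( \wedge(\Theta^*) \) commutes with wedge product, and your chain of dualities (pairing against \( \o\otimes\eta \), with the separation argument from the proof of Corollary \ref{cor:co-alg}) is exactly what that remark leaves implicit.
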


In the case \( N=M \) we will consider more general bundle homomorphisms on \( \U(E) \) arising from differential operators:

\begin{prop}\label{prop:diffop}
	\begin{enumerate}
		\item A bundle homomorphism \( \cal{H} : \U(E)\to \U(F) \) covering the identity determines a differential operator \( \cal{H}^*: C^\i_c(\wedge(F^*)) \to C^\i_c(\wedge(E^*)) \) by defining \( (\cal{H}^*\o)_p(\a)=\o_p(\cal{H}(\a)) \) for \( \a\in \wedge(E_p)\simeq \U^0(E;p) \).
		\item If \( T: C^\i_c(\wedge(F^*)) \to C^\i_c(\wedge(E^*)) \) is a differential operator, then the algebraic dual of \( T \) defines a bundle homomorphism \( T^*: \U(E)\to \U(F) \) covering the identity. Moreover, \( (T^*)^*=T \). If \( T \) is a differential operator of order \( s \), then \( T^* \) restricts to a bundle homomorphism \( \U^r(E)\to \U^{r+s}(F) \).
	\end{enumerate}
\end{prop}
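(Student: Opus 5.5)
For part (a), the plan is to first check that the formula defines a section of \( \wedge(E^*) \), and then that the resulting operator is local and of finite order. Fix \( \o\in C^\i_c(\wedge(F^*)) \). For each \( p \), the map \( \a\mapsto \o_p(\cal{H}(\a)) \) is linear on \( \wedge(E_p)\simeq\U^0(E;p) \), because \( \cal{H} \) restricted to \( \U^0(E;p) \) is linear into \( \U(F;p) \) and elements of \( \U(F;p) \) are functionals on \( C^\i(\wedge(F^*)) \). Here \( \o_p(\cal H(\a)) \) means the current \( \cal H(\a) \) evaluated on \( \o \). So \( (\cal{H}^*\o)_p\in\wedge(E_p^*) \).

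Smoothness in \( p \) comes from local computation. In a chart with a trivialization of \( E \), \( \cal{H} \) restricted to the finite-rank sub-bundle \( \U^0(E) \) lands in some \( \U^s(F) \), since \( \U^0(E) \) has finite rank and the filtration of \( \U(F) \) is exhaustive. By Proposition \ref{prop:vb} and Theorem \ref{thm:PBW2}, write
\[ \cal{H}(\e_K)=h^{I,L}_K\,(\o\mapsto(\n_{e_I}\o)_p(\e_L)) \]
with \( h^{I,L}_K \) smooth, where \( |I|\le s \) and \( I \) runs over nondecreasing words. Then
\[ (\cal{H}^*\o)_p(\e_K)=h^{I,L}_K(p)\,(\n_{e_I}\o)_p(\e_L). \]
This exhibits \( \cal H^* \) locally as a differential operator of order \( \le s \) with smooth coefficients. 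It preserves compact support because it is local.

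For part (b), take a differential operator \( T \) of order \( s \). For \( x\in\U^r(E;p) \), define \( T^*x(\o):=x(T\o) \). The key steps are as follows.

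1. Show that \( T^*x \) is supported in \( \{p\} \) and has order \( \le r+s \). Locality of \( T \) gives the support claim: if \( \o \) vanishes near \( p \), so does \( T\o \). For the order, \( x \) extends continuously to \( C^r \) forms and \( T \) maps \( C^{r+s} \) continuously to \( C^r \). By Proposition \ref{prop:surjective} (applied to \( F \) via the same Taylor/monomial argument), it follows that \( T^*x\in\U^{r+s}(F;p) \).
2. Show linearity and smoothness. Linearity is clear. For smoothness, compute matrix coefficients in local frames as in the proof of Proposition \ref{prop:vb}. For the basis element \( x=(\o\mapsto(\n_{e_I}\o)_p(\e_K)) \), pair \( T^*x \) against test forms \( \frac{1}{J!}(e-p)^J\,d\e^L \). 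By Lemma \ref{lem:euclidean} (using flat coordinate connections), the coefficient equals
\[ \partial^I\bigl(T(\tfrac{1}{J!}(e-p)^J d\e^L)\bigr)_p(\e_K). \]
This is a polynomial in derivatives of the coefficients of \( T \) at \( p \), hence smooth in \( p \).
3. Show \( (T^*)^*=T \). Unwinding definitions on \( \a\in\U^0(E;p) \), which is the evaluation current \( \o\mapsto\o_p(\a) \), gives
\[ ((T^*)^*\o)_p(\a)=(T^*\a)(\o)=(T\o)_p(\a). \]

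The main obstacle is the smoothness and locality in part (a). The concern is whether an arbitrary bundle homomorphism has bounded order on \( \U^0(E) \) and smooth coefficients in the PBW basis. I would settle this by using that \( \U^0(E) \) has finite rank, so it maps into some finite filtration level, and that the PBW basis of Theorem \ref{thm:PBW2} comes from a local smooth frame of \( \U(F) \) (Lemma \ref{lem:varying}).
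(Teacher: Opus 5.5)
The paper states Proposition \ref{prop:diffop} without proof, so there is no argument of its own to compare against. Your proposal is a correct way to supply one. Its smoothness computations follow exactly the technique the paper uses in the proofs of Proposition \ref{prop:vb} and Lemma \ref{lem:varying}: you expand in the frame of Theorem \ref{thm:PBW2} and pair against the monomial forms \( \frac{1}{J!}(e-p)^J d\e^L \) via Lemma \ref{lem:euclidean}. The identity \( ((T^*)^*\o)_p(\a)=(T^*\a)(\o)=(T\o)_p(\a) \) is the whole content of \( (T^*)^*=T \).

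Two justifications should be repaired.

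\emph{The order bound in part (a).} Finite rank of \( \U^0(E) \) plus exhaustiveness of the filtration only gives, at each \( p \), some level \( s(p) \) with \( \cal{H}(\U^0(E;p))\subset\U^{s(p)}(F;p) \). If \( s(p) \) were unbounded near a point, \( \cal{H}^* \) would not be a differential operator there. The uniform bound comes instead from the paper's convention that smooth sections of a filtered infinite-rank bundle lie in a finite stage, \( C^\i(E)=\cup_i C^\i(E_i) \). \( \cal{H} \) sends each of the finitely many local frame sections \( \e_K \) of \( \U^0(E) \) (cut off to global sections) to a smooth section of \( \U(F) \), hence to one of finite order. So on the chart they all land in a single \( \U^s(F) \). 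State this explicitly rather than appealing to exhaustiveness.

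\emph{Step 1 of part (b).} Your Step 1 is valid, but it leans on Proposition \ref{prop:surjective}. The paper states that result only for \( TM \) and does not prove it. Its nontrivial inclusion (every point-supported current of order \( \le m \) lies in \( \U^m \)) is Schwartz's structure theorem, not a Taylor/monomial argument. You can avoid it entirely:
\begin{itemize}
\item With flat coordinate connections, Lemma \ref{lem:euclidean} shows \( x \) is a combination of the functionals \( \eta\mapsto(\p^I\eta)_p(\e_K) \) with \( |I|\le r \).
\item Write \( T=\sum_{|\b|\le s}a_\b\p^\b \) locally. The Leibniz rule gives \( \p^I(a_\b\p^\b\o)=\sum_{\g\le I}\binom{I}{\g}(\p^\g a_\b)\,\p^{I-\g+\b}\o \).
\item This exhibits \( T^*x \) directly as a combination of \( \o\mapsto(\p^J\o)_p(\e_L) \) with \( |J|\le r+s \), hence an element of \( \U^{r+s}(F;p) \).
\item The coefficients are built from \( (\p^\g a_\b)(p) \), so they are visibly smooth in \( p \).
\end{itemize}
This merges your Steps 1 and 2 and makes the order and continuity considerations unnecessary.
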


\begin{defn}\label{def:consistent}
	It is not true that \( (\cal{H}^*)^*=\cal{H} \) for a general bundle homomorphism \( \cal{H} : \U(E)\to \U(F) \) covering the identity, since the left hand side is determined entirely by the value of \( \cal{H} \) on \( \U^0(E) \). We will call homomorphisms that satisfy this equality \emph{consistent}. 
\end{defn}

By Proposition \ref{prop:diffop}, a bundle homomorphism \( \cal{H}: \U(E)\to \U(F) \) covering the identity is consistent if and only if it is the dual of a differential operator.

\subsection{Module Structures}
The space \( C^\i(M) \) acts on each fiber \( \U_k^r(E;p) \) of \( \U_k^r(E) \) by duality. That is, if \( f\in C^\i(M) \) and \( T\in \U_k^r(E;p) \), then for \( \o\in C^\i(\wedge^k(E^*)) \), let \( f\lrcorner T (\o) := T(f\o) \). This turns \( \U_k^r(E;p) \) into a \( C^\i(M) \)-module, and each \( f\in C^\i(M) \) defines a bundle endomorphism \( f\lrcorner: \U_k^r(E)\to \U_k^r(E) \). The bundle endomorphism \( f\lrcorner \) lifts via \( \Phi \) to a bundle endomorphism, which we will also denote by \( f\lrcorner \), of \( \otimes(TM)\boxtimes \wedge(E) \), given by \( f\lrcorner (v\otimes \a)=(\n_{v_{(1)}} f) v_{(2)}\boxtimes \a \).

As a corollary to Proposition \ref{prop:pushforward}, we have
\begin{cor}\label{cor:pushforward}
	Pushforward commutes with \( f\lrcorner \) on \( \U(E) \). That is, if \( f\in C^\i(N) \) and \( \Theta: E\to F \) is a smooth vector bundle homomorphism covering \( \theta: M\to N \), then \( \Theta_* [(f\circ\theta)\lrcorner T] = f\lrcorner (\Theta_* T) \) for all \( T\in \U(E) \).
\end{cor}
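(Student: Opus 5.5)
The identity \( \Theta_* [(f\circ\theta)\lrcorner T] = f\lrcorner (\Theta_* T) \) is an equality of elements of \( \U_k^r(F;\theta(p)) \), which are linear functionals on \( C^\i(\wedge^k(F^*)) \). So we prove it by evaluating both sides on an arbitrary form and unwinding the definitions of pushforward and of the \( C^\i \)-action by duality. The one substantive step is the commutation of pullback with multiplication by functions, which is the \( \wedge^0 \) case of the fact (used for Proposition \ref{prop:pushforward}) that pullback commutes with wedge product.

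Fix \( p\in M \), \( T\in \U_k^r(E;p) \), \( f\in C^\i(N) \), and \( \o\in C^\i(\wedge^k(F^*)) \). By definition of the action,
\[ \left(f\lrcorner(\Theta_* T)\right)(\o) = (\Theta_* T)(f\o). \]
By definition of pushforward, this equals
\[ T\left(\wedge^k(\Theta^*)\circ (f\o)\right). \]
On the other side, the definition of pushforward gives
\[ \Theta_*[(f\circ\theta)\lrcorner T](\o) = \left((f\circ\theta)\lrcorner T\right)\left(\wedge^k(\Theta^*)\circ\o\right), \]
and the definition of the action turns this into
\[ T\left((f\circ\theta)\cdot (\wedge^k(\Theta^*)\circ \o)\right). \]

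It therefore suffices to show the pointwise identity
\[ \wedge^k(\Theta^*)\circ(f\o) = (f\circ\theta)\cdot(\wedge^k(\Theta^*)\circ\o) \]
as sections of \( \wedge^k(E^*) \). At \( q\in M \), the left side is \( \wedge^k(\Theta_q^*)(f(\theta(q))\,\o_{\theta(q)}) \). Since \( \wedge^k(\Theta_q^*) \) is linear on the fiber, this equals \( f(\theta(q))\,\wedge^k(\Theta_q^*)(\o_{\theta(q)}) \), which is the right side at \( q \).

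Equivalently, one can view \( f \) as a \( 0 \)-form and invoke the relation
\[ \wedge^{k+j}(\Theta^*)\circ(\o\wedge\eta)=(\wedge^k(\Theta^*)\circ\o)\wedge(\wedge^j(\Theta^*)\circ\eta) \]
with \( j=0 \), noting that the pullback of a function \( f \) is \( f\circ\theta \). This is the precise sense in which the statement is a corollary of Proposition \ref{prop:pushforward}: the action \( f\lrcorner \) is the contraction of \( \D T \) against \( f\otimes\cdot \), i.e. \( (f\lrcorner T)(\o)=(f\otimes\o)(\D T) \), and \( \Theta_* \) commutes with \( \D \).

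Since \( \o \) was arbitrary, the two functionals agree, and as \( p \) and \( T \) were arbitrary, the bundle maps agree. There is no real obstacle here. The only point needing care is bookkeeping: \( \Theta_* \) maps the fiber over \( p \) to the fiber over \( \theta(p) \), so the function acting before pushforward must be \( f\circ\theta \) on \( M \), not \( f \) itself.
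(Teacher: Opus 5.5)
Your proof is correct. You reduce the claim to the fiberwise identity \( \wedge^k(\Theta^*)\circ(f\o)=(f\circ\theta)\cdot(\wedge^k(\Theta^*)\circ\o) \), which is the degree-zero case of the pullback/wedge compatibility the paper uses for Proposition \ref{prop:pushforward}. Your closing remark, that \( (f\lrcorner T)(\o)=(f\otimes\o)(\D T) \) together with \( \D\circ\Theta_*=(\Theta_*\otimes\Theta_*)\circ\D \) gives the claim, is the route by which the paper obtains this statement as a corollary of Proposition \ref{prop:pushforward}; your main argument just skips the co-product \( \D \).
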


The space \( C^\i(\U_k^r(E)) \) has a \( C^\i(M) \)-module structure given by \( f\lrcorner \). Note that is not the standard \( C^\i(M) \)-module structure on the space of sections of a vector bundle over \( M \). Pushforward by a smooth vector bundle homomorphism covering the identity is a module homomorphism by Corollary \ref{cor:pushforward}. 

\begin{thm}
	With the module structure given by \( f\lrcorner \), the space \( C^\i(\U_k^r(E)) \) is a finitely generated projective \( C^\i(M) \)-module.
\end{thm}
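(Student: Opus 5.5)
The strategy is to identify \( C^\i(\U_k^r(E)) \) with the \( f\lrcorner \) action as a \( C^\i(M) \)-module isomorphic to the space of order-\( \leq r \) differential operators \( \cal{D}^r(\wedge^k E^*, \R) \) from \( C^\i(\wedge^k(E^*)) \) to \( C^\i(M) \). That space is \( C^\i(J^r(\wedge^k E^*)^*) \), which is a module of sections of a finite-rank vector bundle with its \emph{standard} module structure. The Serre--Swan theorem then finishes the proof.

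\textbf{Step 1: the section-to-operator map.} A section \( T \) of \( \U_k^r(E) \) determines an operator \( D_T \) by \( (D_T\o)(p) := T_p(\o) \). Using the local trivializations from Proposition \ref{prop:vb}, write \( T_p = a^{J,L}(p)\, G(p)(x_J\boxtimes e_L) \) with smooth coefficients. Then
\[ D_T\o = a^{J,L}\,\p^{J}(\o_L) \]
locally, so \( D_T \) is a smooth differential operator of order \( \leq r \). Conversely, every order-\( \leq r \) operator \( D \) defines a smooth section \( p\mapsto(\o\mapsto (D\o)(p)) \). The functional at \( p \) is supported in \( \{p\} \) and has order \( \leq r \), so by Proposition \ref{prop:surjective} (or its bundle analogue) it lies in \( \U_k^r(E;p) \). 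Smoothness again follows from reading off coefficients in the trivialization. This gives an \( \R \)-linear bijection \( C^\i(\U_k^r(E)) \cong \cal{D}^r \), which is the content of the Remark identifying \( \U_k^r(E) \) with \( J^r(\wedge^k E^*)^* \).

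\textbf{Step 2: the module structures match.} Under this bijection,
\[ D_{f\lrcorner T}\o(p) = T_p(f\o) = (D_T\circ m_f)(\o)(p), \]
so \( f\lrcorner \) corresponds to right composition with multiplication by \( f \). The main point to handle carefully is that \( \cal{D}^r \) with right multiplication is also a finitely generated projective \( C^\i(M) \)-module, even though it is not the naive module structure on sections of \( J^r(\wedge^k E^*)^* \). I would argue this directly rather than through jets.

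\textbf{Step 3: right multiplication as a tensor product of modules.} Right multiplication makes \( \cal{D}^r \) equal to
\[ \mathrm{Hom}_{C^\i(M)}\bigl(C^\i(J^r(\wedge^kE^*)),\,C^\i(M)\bigr), \]
where \( C^\i(J^r) \) is regarded as a module via \( \o\mapsto f\o \) acting through the prolongation \( j^r \). This is still not the standard structure. A cleaner alternative is to build a bundle isomorphism \( \U_k^r(E)\cong S^{\leq r}(TM)\boxtimes\wedge^k(E) \) that intertwines \( f\lrcorner \) with an operator that is upper triangular with respect to the order filtration, using Corollary \ref{cor:unnatural} and the lifted formula \( f\lrcorner(v\otimes\a)=(\n_{v_{(1)}}f)v_{(2)}\boxtimes\a \). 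Concretely, choose a torsion-free connection and use the symmetrization section \( \s: S^{\leq r}(TM)\to\otimes^{\leq r}(TM) \) composed with \( \Phi \). This identifies \( C^\i(\U_k^r) \) as a set with \( M_0:=C^\i(S^{\leq r}TM\boxtimes\wedge^kE) \), and the twisted action becomes
\[ f\star s = f s + (\text{terms involving } \n^{\geq1}f \text{ applied to lower order}). \]

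\textbf{Step 4: transfer projectivity across the filtration (main obstacle).} I would induct on the filtration degree \( r \). The short exact sequence of twisted modules
\[ 0\to C^\i(\U_k^{r-1})\to C^\i(\U_k^r)\to C^\i(S^rTM\boxtimes\wedge^kE)\to 0 \]
has quotient action equal to the standard one, since the top symbol is \( C^\i(M) \)-linear. The quotient is therefore finitely generated projective by Serre--Swan, so the sequence splits as \( C^\i(M) \)-modules. Hence \( C^\i(\U_k^r) \) is an extension of finitely generated projectives, and is itself finitely generated projective. The base case \( r=0 \) is \( C^\i(\wedge^kE) \), where \( f\lrcorner \) is ordinary multiplication. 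The remaining work in this step is checking that the subspace \( C^\i(\U^{r-1}) \) is a submodule and that the symbol map is module-linear; both follow from the Leibniz formula above.
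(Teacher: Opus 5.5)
Your proof is correct, and its load-bearing part, Step 4, takes a genuinely different route from the paper.

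The paper's argument is an embedding argument:
\begin{itemize}
\item It embeds \( E \) in a trivial bundle \( H=E\oplus E' \) and embeds \( M \) in \( \mathbb{R}^N \) with a tubular neighborhood.
\item Using pushforwards, which commute with \( f\lrcorner \), it exhibits \( C^\infty(\mathcal{U}_k^r(E)) \) as a direct summand of the twisted module of sections of the trivial bundle \( M\times S^{\leq r}\mathbb{R}^N\otimes\wedge^k\mathbb{R}^K \).
\item It shows that model module is free on the constant sections by a triangularity argument: downward induction on symmetric degree for independence, upward induction for spanning.
\end{itemize}

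You instead filter by order. The map \( f\lrcorner \) preserves \( \mathcal{U}_k^{r-1}(E) \) and acts on \( \mathcal{U}_k^r/\mathcal{U}_k^{r-1} \) as plain multiplication by \( f(p) \), since \( T((f-f(p))\omega) \) has order \( \leq r-1 \). Serre--Swan then makes the graded piece projective, so the sequence splits, and induction from \( r=0 \) finishes.

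The facts you leave implicit are routine:
\begin{itemize}
\item \( \mathcal{U}_k^{r-1}(E)\subset\mathcal{U}_k^r(E) \) is a smooth subbundle, because the transition coefficients \( a^{J,L} \) in the proof of Proposition \ref{prop:vb} vanish for \( |J|>|I| \).
\item The map on sections onto the quotient is surjective, because the subbundle has a complement.
\item You only need the quotient to be some finite-rank bundle with untwisted induced action, not specifically \( S^r(TM)\boxtimes\wedge^k(E) \).
\end{itemize}

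Your route avoids the Whitney embedding, tubular neighborhood and complementary-bundle machinery, and it proves more. The twisted module is (non-canonically) isomorphic to \( \bigoplus_{j\leq r}C^\infty(S^j(TM)\boxtimes\wedge^k(E)) \), i.e. to the untwisted module of sections of the same bundle. The paper's route instead gives an explicit free module, with an explicit basis, that contains \( C^\infty(\mathcal{U}_k^r(E)) \) as a summand.

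Steps 1--3 are not needed and could be dropped. As written, they also have problems:
\begin{itemize}
\item The first half of Step 3, the module structure on \( C^\infty(J^r) \) ``through the prolongation'', is imprecise.
\item Step 1 cites Proposition \ref{prop:surjective}, which the paper states only for \( E=TM \).
\end{itemize}
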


\begin{proof}
	Let \( E'\to M \) be a smooth vector bundle such that the Whitney sum \( H=E\oplus E' \) is trivial (such a vector bundle always exists even if \( M \) is non-compact.) Let \( \iota: E\to H \) and \( \pi: H\to E \) be the canonical injection and projection, respectively. Using the Whitney embedding theorem, assume \( M \) is embedded in \( \R^N \) for some \( N>0 \) and let \( U\subset \R^N \) be a tubular neighborhood of \( M \) (with possibly non-constant radius, since \( M \) is not assumed to be compact.) Let \( \xi: M\to U \) be the inclusion of \( M \) into \( U \), and let \( \upsilon: U\to M \) be the retraction map defined by the tubular neighborhood theorem.
	
	Observe that the pullback bundle \( \check{H}:=\upsilon^* H \) over \( U \) is also trivial. Let \( \Xi: H\to \check{H} \) be the vector bundle homomorphism \( (\xi, Id) \), and let \( \Upsilon: \check{H}\to H \) be the vector bundle homomorphism \( (\upsilon, Id) \). 
	
	Let \( R=\xi^* \U_k^r(\check{H}) \), the restriction of \( \U_k^r(\check{H}) \) to \( M \). Since \( f\lrcorner \) acts on fibers, the action restricts to \( R \). The bundle map \( \Xi_*: \U_k^r(H)\to \U_k^r(\check{H})\) factors through \( R \), so let \( X: \U_k^r(H) \to R  \) denote this bundle map with restricted codomain.
	
	Likewise, let \( T: R\to \U_k^r(H) \) denote the restriction of \( \Upsilon_* \) to \( R \) (i.e. the pullback by \( \xi \) of \( \Upsilon_* \).) Since \( \Upsilon \circ \Xi \) is the identity on \( H \), it follows that \( T\circ X \) is the identity on \( \U_k^r(H) \).
	
	Moreover, Corollary \ref{cor:pushforward} implies \( X \) and \( T \) commute with \( f\lrcorner \). Since \( \iota_* \), \( \pi_* \), \( X \) and \( T \) are all smooth bundle maps covering the identity on \( M \) which commute with \( f\lrcorner \), we thus have \( C^\i(M) \)-linear module homomorphisms
	\begin{equation}
		 \begin{tikzcd}[column sep=15ex]
			C^\i(\U_k^r(E)) \arrow{r}{C^\i(\iota_*)} &  C^\i(\U_k^r(H)) \arrow{r}{C^\i(X)} & C^\i(R)
		\end{tikzcd}
	\end{equation}
	and
	\begin{equation}
		 \begin{tikzcd}[column sep=15ex]
			C^\i(R) \arrow{r}{C^\i(T)} & C^\i(\U_k^r(H)) \arrow{r}{C^\i(\pi_*)} & C^\i(\U_k^r(E)),
		\end{tikzcd}
	\end{equation}
	which by functoriality compose to the identity on \( C^\i(\U_k^r(E)) \). The top map in particular is an injection and the bottom map is surjective. Thus, by the splitting lemma, \( C^\i(\U_k^r(E)) \) is a direct summand of \( C^\i(R) \), so it suffices to show that \( C^\i(R) \) is free and finitely generated.
	
	The bundle \( R \) is isomorphic via Corollary \ref{cor:unnatural} to the trivial bundle \( M\times S^{\leq r} \R^N \otimes \wedge^k \R^K \), where \( K \) is the dimension of \( H \). By Proposition \ref{prop:leibniz}, the induced action of \( f\lrcorner \) on \( M\times S^{\leq r} \R^N \otimes \wedge^k \R^K \) is given by
	\[ f\lrcorner(p, x\otimes \alpha) = (p, [\p_{x_{(1)}}f](p)\cdot x_{(2)}\otimes \alpha), \]
	where \( x\in S^{\leq r}\R^N \) and \( \a\in \wedge^k \R^K \). So, we are reduced to showing that \( C^\i (M\times S^{\leq r} \R^N \otimes \wedge^k \R^K) \) is free and finitely generated over \( C^\i(M) \).

	Claim: The set \( S \) of constant sections \( \{ x_s\otimes \a_t \} \) of \( M\times S^{\leq r} \R^N \otimes \wedge^k \R^K \), where \( x_s \) is the standard basis element (over \( \R \)) of \( S^{\leq r}\R^N \), and \( \a_t  \) is a standard basis element (again, over \( \R \)) of \( \wedge^k \R^K \), is a basis of \( C^\i (M\times S^{\leq r} \R^N \otimes \wedge^k \R^K) \) (over \( C^\i(M) \).)
	
	We first show \( S \) is linearly independent. Suppose
	\begin{equation}\label{eq:li}
		 f^{s,t}\lrcorner(x_s\otimes \a_t)=0.
	\end{equation}
	Let \( 0\leq r'\leq r \) and suppose we have shown \( f^{s,t}=0 \) for all \( |s|>r' \) (The case that \( r'=r \) is vacuous.) Let us show that \( f^{s,t}=0 \) for all \( |s|=r' \). Expanding \eqref{eq:li}, the value of \( f^{s,t}\lrcorner(x_s\otimes \a_t) \) at \( p\in M \) is 
	\[
	\sum_{|s|=r'} f^{s,t}(p)\cdot x_s \otimes \a_t + w,
	\]
	where \( w \) consists of terms with symmetric power strictly less than \( r' \). Since \( \{ x_s\otimes \a_t \} \) is a basis of the fiber above \( p \), it follows that \( f^{s,t}(p)=0 \) for all \( |s|=r' \), and since this is true for each \( p\in M \), \( f^{s,t}=0 \) on all \( M \). We conclude by downward induction that \( f^{s,t}=0 \) for all \( s \) and \( t \), thus \( S \) is linearly independent.
	
	Let \( |S| \) denote the submodule of \( C^\i (M\times S^{\leq r} \R^N \otimes \wedge^k \R^K) \) generated by \( S \). To see that \( |S|= C^\i (M\times S^{\leq r} \R^N \otimes \wedge^k \R^K) \), we proceed again by induction but this time from inducting upward.
	
	First, since the value of \( f\lrcorner(1\otimes \a_t) \) at \( p \) is \( f(p)\otimes \a_t \), we conclude that \( C^\i (M\times S^0 \R^N \otimes \wedge^k \R^K) \) is contained in \( |S| \). If \( r=0 \) we are done, otherwise let \( 0< r'\leq r \) and suppose \( C^\i (M\times S^{< r'} \R^N \otimes \wedge^k \R^K) \) is contained in \( |S| \). An arbitrary section of \( M\times S^{r'}\R^N \otimes \wedge^k \R^K) \) can be written as \( f^{s,t} x_s\otimes \a_t \) where \( |s|=r' \) and \( f^{s,t}\in C^\i(M) \) for all \( s \) and \( t \). On the other hand, \[ f^{s,t}\lrcorner(x_s\otimes \a_t) = f^{s,t} x_s\otimes \a_t + [\p_{{x_s}_{(1')}}f] {x_s}_{(2')}\otimes \a_t, \]
where \( {x_s}_{(1')} \) and \( {x_s}_{(2')} \) are the Sweedler notation factors of \( \Delta x_s - 1\otimes x_s \). The second term on the right hand side is contained in \( |S| \) by our inductive hypothesis, and so is the term on the left, leaving \( f^{s,t} x_s\otimes \a_t \), which therefore must be as well.
\end{proof}


\begin{defn}\label{def:LfRf}
	Let \( F\to M \) be a smooth vector bundle and consider the actions \( L_f \) and \( R_f \) of \( C^\i(M) \) on \( Hom(\U_k^r(E),\U_j^s(F)) \) given by pre- and post-composition by \( f\lrcorner \).
\end{defn}

A similar proof shows:
\begin{thm}
	The space \( C^\i(Hom(\U_k^r(E),\U_j^s(F))) \) is a finitely generated projective module, using either \( L_f \) or \( R_f \) for the module structure. The consistent homomorphisms (see Definition \ref{def:consistent}) are a sub-bundle, again using either \( L_f \) or \( R_f \) for the module structure.
\end{thm}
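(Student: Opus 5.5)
We follow the proof of the preceding theorem, applying its retraction trick to both arguments at once. Keep the notation \( H=E\oplus E' \), \( \check H=\upsilon^*H \), \( R=\xi^*\U_k^r(\check H) \), with the maps \( X:\U_k^r(H)\to R \) and \( T:R\to \U_k^r(H) \). Choose an analogous trivial bundle \( G=F\oplus F' \) and set \( R'=\xi^*\U_j^s(\check G) \), with maps \( X' \) and \( T' \).

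\textbf{Step 1: reduce to a trivial Hom bundle.} By Corollary \ref{cor:pushforward}, all of \( \iota_*,\pi_*,X,T,X',T' \) commute with \( f\lrcorner \). Hence the maps
\[ \phi\mapsto X'\iota'_*\phi\,\pi_*T \qquad\text{and}\qquad \psi\mapsto \pi'_*T'\psi X\iota_* \]
are bundle maps between \( Hom(\U_k^r(E),\U_j^s(F)) \) and \( Hom(R,R') \). They commute with both \( L_f \) and \( R_f \), and their composite is the identity because \( T\circ X=Id \) and \( \pi_*\iota_*=Id \). By the splitting lemma, \( C^\i(Hom(\U_k^r(E),\U_j^s(F))) \) is therefore a direct summand of \( C^\i(Hom(R,R')) \) for either module structure. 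Via Corollary \ref{cor:unnatural}, \( R \) and \( R' \) are trivial bundles \( M\times V \) and \( M\times V' \), where \( V=S^{\leq r}\R^N\otimes\wedge^k\R^K \) and \( V' \) is defined likewise. On these, \( f\lrcorner \) acts by \( x\otimes\a\mapsto (\p_{x_{(1)}}f)\,x_{(2)}\otimes\a \): unipotent upper triangular with respect to the symmetric-degree filtration, with diagonal part equal to multiplication by \( f \).

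\textbf{Step 2: freeness of \( C^\i(Hom(R,R')) \).} Take the constant sections \( E_{st} \) given by the elementary matrices in the standard bases. For \( R_f \), we have \( R_f(E_{st})=E_{st}\circ(f\lrcorner) \). This equals \( fE_{st} \) plus terms that lower the source degree. For \( L_f \), the correction terms instead raise the target-degree index in the sense of the filtration. In each case, order the pairs \( (s,t) \) so that the corrections are strictly lower in a total order. The two inductions of the previous proof then apply verbatim. For linear independence, look at the extremal-degree component at each point, which forces the coefficients to vanish, and induct. For spanning, subtract \( f\lrcorner \) corrections inductively. This shows the module is free and finitely generated, hence the original module is projective and finitely generated.

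\textbf{Step 3: consistent homomorphisms.} By Proposition \ref{prop:diffop}, \( \cal H \) is consistent iff \( \cal H=(\cal H^*)^* \), and \( \cal H^* \) depends only on \( \cal H\lfloor_{\U^0(E)} \). The map \( P:\cal H\mapsto(\cal H^*)^* \) is therefore a fiberwise linear idempotent bundle endomorphism. It is well defined fiberwise because \( (\cal H^*)^* \) at \( p \) uses only the jet of \( \cal H^* \), whose order is bounded by \( s \). Consistent homomorphisms form its image, so they form a smooth sub-bundle, provided \( P \) has constant rank; this holds since \( P \) is idempotent. To see that \( P \) commutes with \( L_f \) and \( R_f \), check that \( (f\lrcorner)^* \) is multiplication by \( f \) and that dualization reverses composition, so that \( (\cal H\circ f\lrcorner)^*=f\cal H^* \), and similarly on the other side. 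Then the sections of the image are a direct summand of a finitely generated projective module, and hence are themselves finitely generated and projective.

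\textbf{Main obstacle.} The hardest step is Step 3's verification that \( P \) is a smooth bundle map, i.e. that \( (\cal H^*)^* \) varies smoothly with \( \cal H \) and is well defined on fibers without global data. I would verify this in the local trivialization of Lemma \ref{lem:varying}, computing the matrix coefficients by evaluating on the monomial forms \( \frac{1}{J!}(x-p)^J de^L \) as in \eqref{eq:smoothly}.
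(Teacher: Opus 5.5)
Your Steps 1 and 2 are what the paper means by ``a similar proof shows'' (the paper gives nothing more), and they go through. Only the unitriangularity of \( f\lrcorner \) matters. So it is harmless that the paper's \( L_f \) is pre-composition (your \( R_f \)). It is also harmless that your directions are flipped: since \( f\lrcorner \) is \( f \) plus degree-lowering terms, pre-composition produces corrections of strictly higher source degree, and post-composition produces corrections of strictly lower target degree.

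The gap is in Step 3. For \( r\geq 1 \), the map \( P:\cal{H}\mapsto(\cal{H}^*)^* \) is not a bundle endomorphism, and it does not even map \( Hom(\U_k^r(E),\U_j^s(F)) \) to itself. For \( S\in\U_k^r(E;p) \), the value \( (\cal{H}^*)^*(S)(\omega)=S(\cal{H}^*\omega) \) needs the \( r \)-jet at \( p \) of the form \( \cal{H}^*\omega \). That means it needs the \( r \)-jet at \( p \) of the \emph{coefficients} of \( \cal{H}^* \), i.e.\ of the section \( \cal{H}\lfloor_{\U^0_k(E)} \), and the value \( \cal{H}(p) \) does not determine this. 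Your justification conflates two things: the order \( s \) of \( \cal{H}^* \), which counts derivatives of \( \omega \), and the number of derivatives of its coefficients that enter. Moreover, \( (\cal{H}^*)^* \) maps \( \U_k^r(E) \) into \( \U_j^{r+s}(F) \), not into \( \U_j^s(F) \). So \( P \) is an order-\( r \) differential operator on sections, and the constant-rank idempotent argument has nothing to act on.

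Here is a concrete case. Let \( M=\R \), \( k=j=0 \), \( r=s=1 \), so \( \U^1_0(E;p) \) is spanned by \( \delta_p \) and \( D_p:\omega\mapsto\omega'(p) \). Suppose \( \cal{H}(\delta_p)=a(p)\delta_p+e(p)D_p \), so that \( \cal{H}^*=a+e\,\p_x \).
\begin{itemize}
\item Wherever \( e(p)\neq 0 \), \( P(\cal{H})(D_p) \) involves \( \omega''(p) \).
\item If \( e\equiv 0 \), then \( P(\cal{H})(D_p)=a'(p)\delta_p+a(p)D_p \), whatever \( \cal{H}(D_p) \) is.
\end{itemize}
Hence the consistent homomorphisms are exactly \( \delta_p\mapsto g(p)\delta_p \), \( D_p\mapsto g'(p)\delta_p+g(p)D_p \) with \( g\in C^\i(\R) \). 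Their values fill a rank-two sub-bundle. Yet a section \( \delta_p\mapsto a\delta_p \), \( D_p\mapsto b\delta_p+aD_p \) of that sub-bundle is consistent only if \( b=a' \). Consistency is therefore a differential (holonomic) condition, so the consistent homomorphisms are not the sections of any sub-bundle. The second assertion can only be proved in the form ``a finitely generated projective submodule''.

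To get that, use your correct identities \( (\cal{H}\circ f\lrcorner)^*=f\,\cal{H}^* \) and \( (f\lrcorner\circ\cal{H})^*=\cal{H}^*\circ f \) in a different way.
\begin{enumerate}
\item Evaluate \( T^*S \) for \( S:\omega\mapsto(\p^b\omega)_q(\alpha) \) with \( |b|=r \), at a point \( q \) where a top-order coefficient of \( T \) is nonzero. This shows that a consistent \( \cal{H}\in C^\i(Hom(\U_k^r(E),\U_j^s(F))) \) is exactly \( T^* \) for a differential operator \( T \) of order \( \leq s-r \), and that \( \cal{H}=0 \) if \( s<r \).
\item Hence \( T\mapsto T^* \) identifies the consistent homomorphisms with \( C^\i(Hom(J^{s-r}(\wedge^jF^*),\wedge^kE^*)) \).
\item On this space, pre-composition with \( f\lrcorner \) becomes the standard action \( T\mapsto fT \).
\item Post-composition becomes \( T\mapsto T\circ f \). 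This preserves the filtration by order and acts as multiplication by \( f \) on principal symbols.
\end{enumerate}
Both module structures are then finitely generated projective.
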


\section{Distinguished Endomorphisms of \( \U \) - Interior Product and Higher Covariant Differentiation}\label{sec:distinguished}
We will now derive formulas for actions on \( \U(E) \) arising from interior product and higher covariant differentiation. These will be defined as actions on \( \otimes(TM) \boxtimes \wedge(E) \), and they will pass through the projection \( \Phi \) to actions on \( \U(E) \) by consistent endomorphisms. In the case \( E=TM \) and in the presence of a semi-Riemannian metric on \( M \), we will also compute adjoints for these actions.

\subsection{Interior Product}
Let \( X\in C^\i(\wedge(E)) \) and let \( \mathbb{E}_X \) be the vector bundle endomorphism of \( \otimes(TM) \boxtimes \wedge(E) \) defined for \( v\boxtimes \a\in \otimes(T_p M) \boxtimes \wedge(E_p) \) by
\begin{equation*}
	\mathbb{E}_X(v\boxtimes \a):=v_{(1)}\boxtimes (\n_{\tilde{v}_{(2)}} X)\wedge\a,
\end{equation*}
where as usual we are using Sweedler notation for the tensor co-product on \( \otimes(T_pM) \), and \( \tilde{v} \) is any smooth extension of \( v \). By Corollary \ref{cor:interior},
\begin{prop}\label{prop:bosoncreation}
	Let \( \o\in C^\i(\wedge(E^*)) \). If \( p\in M \) and \( v\boxtimes \a \in \otimes(T_pM) \boxtimes \wedge(E_p) \), then \[ \Phi_p(\mathbb{E}_X(v\boxtimes \a))\o=\Phi_p (v\boxtimes \a)(\iota_X \o). \]
\end{prop}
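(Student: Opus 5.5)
The plan is to unwind both sides using the definition of \( \Phi_p \) in \eqref{eq:Phip} and then apply Corollary \ref{cor:interior}. Everything is linear in \( v\boxtimes\a \), so we may take \( v=v_1\otimes\cdots\otimes v_j \) simple and \( \a\in\wedge(E_p) \). The right hand side then reads
\[ \Phi_p(v\boxtimes\a)(\iota_X\o)=\left(\n^j_{\tilde v}(\iota_X\o)\right)_p(\a). \]

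The key step is to expand \( \n^j_{\tilde v}(\iota_X\o) \). We regard \( \o\in C^\i(\wedge(E^*)) \) as a section of \( Hom(\wedge(E),F) \) with \( F=M\times\R \) the trivial bundle. Corollary \ref{cor:interior} then gives
\[ \n^j_{\tilde v}(\iota_X\o)=\iota_{\n_{\tilde v_{(1)}}X}\left(\n_{\tilde v_{(2)}}\o\right). \]
Here \( \iota_Y\eta \) means \( \a\mapsto\eta(Y\wedge\a) \), since \( \iota_X \) is defined as dual to \( Y\mapsto X\wedge Y \). Evaluating at \( p \) on \( \a \) yields
\[ \left(\n_{\tilde v_{(2)}}\o\right)_p\left((\n_{\tilde v_{(1)}}X)_p\wedge\a\right). \]
Summing over the Sweedler components, the tensor factor \( v_{(2)} \) differentiates \( \o \) and the factor \( v_{(1)} \) differentiates \( X \). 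Comparing with the definition of \( \mathbb{E}_X \), which puts \( v_{(1)} \) in the tensor slot and differentiates \( X \) by \( v_{(2)} \), I would use cocommutativity of the relevant indexing. Precisely, the tensor co-product on \( \otimes(T_pM) \) is a sum over shuffles, so relabelling shuffles swaps the roles of the two factors. By the definition of \( \Phi_p \), the expression above is exactly \( \Phi_p(v_{(1)}\boxtimes(\n_{\tilde v_{(2)}}X)\wedge\a)(\o) \), which is the left hand side.

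Two details need care. The first is that \( \n_{\tilde v_{(2)}}X \) is evaluated at \( p \), so the result is independent of the extension \( \tilde v \) by tensoriality. The second is that higher derivatives of \( \o \) along \( \tilde v_{(1)} \) must be taken with the same extension, so that \( \Phi_p(v_{(1)}\boxtimes\cdot) \) is correctly computed. Both follow from the fact, noted after \eqref{eq:Phip}, that \( \Phi_p \) is independent of extensions.

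The main obstacle is the bookkeeping of Sweedler factors and signs. One must verify that the ordering convention in Corollary \ref{cor:interior} matches the one in the definition of \( \mathbb{E}_X \), or else invoke cocommutativity of the shuffle co-product to swap them. One must also check the sign and ordering convention \( X\wedge\a \) versus \( \a\wedge X \) in the duality \( \iota_X \leftrightarrow X\wedge\cdot \). I would check the first by the case \( j=1 \): there \( \n_v(\iota_X\o)=\iota_{\n_vX}\o+\iota_X\n_v\o \), which matches \( \mathbb{E}_X(v\boxtimes\a)=v\boxtimes X_p\wedge\a+1\boxtimes(\n_vX)_p\wedge\a \). The general case then follows by the same symmetric pattern.
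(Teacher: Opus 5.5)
Your proposal is correct and follows the same route as the paper, which obtains the statement directly from Corollary \ref{cor:interior} after unwinding the definition of \( \Phi_p \). Corollary \ref{cor:interior} differentiates \( X \) along \( v_{(1)} \), whereas \( \mathbb{E}_X \) differentiates it along \( v_{(2)} \). You resolve this mismatch by noting that the tensor (unshuffle) co-product is cocommutative, which is exactly the detail the paper leaves implicit.
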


In particular, \( \mathbb{E}_X \) preserves the kernel \( \cal{K} \) of \( \Phi \), and thus passes to an endomorphism of \( \U(E) \), which we will denote by \( [\mathbb{E}_X] \). By Proposition \ref{prop:bosoncreation},

\begin{cor}\label{cor:exdual}
	\( [\mathbb{E}_X] \) is dual to interior product \( \iota_X \). More precisely, the bundle map \( [\mathbb{E}_X] \) restricted to a fiber \( \U(E;p) \) is dual, using the natural pairing \( (\U(E;p), C^\i(\wedge(E^*))) \), to \( \iota_X \). 
\end{cor}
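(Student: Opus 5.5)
The plan is to unwind the definitions and reduce everything to Proposition \ref{prop:bosoncreation}, which already encodes the needed identity at the level of \( \otimes(T_pM)\boxtimes\wedge(E_p) \). First, \( [\mathbb{E}_X] \) must be well defined. It is defined on \( \U(E;p) \) by \( [\mathbb{E}_X](\Phi_p(y)) := \Phi_p(\mathbb{E}_X(y)) \) for \( y\in\otimes(T_pM)\boxtimes\wedge(E_p) \). Since \( \Phi_p \) is surjective onto \( \U(E;p) \) by definition of \( \U(E;p) \) as its image, every \( T\in\U(E;p) \) has the form \( \Phi_p(y) \). It remains to check that the value does not depend on the choice of \( y \).

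If \( \Phi_p(y)=\Phi_p(y') \), then for every \( \o\in C^\i(\wedge(E^*)) \) Proposition \ref{prop:bosoncreation} gives
\[ \Phi_p(\mathbb{E}_X(y-y'))\o=\Phi_p(y-y')(\iota_X\o)=0. \]
Hence \( \mathbb{E}_X(y-y')\in\ker\Phi_p \), so \( \mathbb{E}_X \) preserves the kernel \( \cal{K} \) fiberwise. This is the only point where any care is needed: one must note that \( \iota_X\o \) is again a legitimate test section in \( C^\i(\wedge(E^*)) \), of degree lowered by the degree of \( X \), so that \( \Phi_p(y-y') \) can be evaluated on it. Passing to the quotient \( \otimes(TM)\boxtimes\wedge(E)/\cal{K}\simeq\U(E) \) as in Corollary \ref{cor:quot} then gives a smooth bundle map, because \( \mathbb{E}_X \) is smooth and \( \Phi \) is a smooth surjective bundle map.

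The duality statement now follows by rereading Proposition \ref{prop:bosoncreation} with \( T=\Phi_p(y) \):
\[ \big([\mathbb{E}_X]T\big)(\o)=T(\iota_X\o) \quad\text{for all } \o\in C^\i(\wedge(E^*)). \]
In other words, under the pairing \( (\U(E;p),C^\i(\wedge(E^*))) \), the map \( [\mathbb{E}_X]\lfloor_{\U(E;p)} \) is the transpose of \( \iota_X \).

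Beyond the well-definedness check, no real obstacle remains, since Proposition \ref{prop:bosoncreation} (via Corollary \ref{cor:interior}) already absorbs the computation with the higher covariant derivatives of \( \iota_X\o \). I would add one remark for completeness. Since \( \iota_X \) is a zeroth-order differential operator, Proposition \ref{prop:diffop} shows its algebraic dual is a bundle map \( \U^r(E)\to\U^r(E) \), and that dual coincides with \( [\mathbb{E}_X] \) by the displayed identity. It follows that \( [\mathbb{E}_X] \) is consistent in the sense of Definition \ref{def:consistent}.
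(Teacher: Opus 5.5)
Your proposal is correct and follows the paper's route. The paper also derives the corollary directly from Proposition \ref{prop:bosoncreation}, first noting that \( \mathbb{E}_X \) preserves \( \ker\Phi \), so \( [\mathbb{E}_X] \) is well defined, and then reading that identity as \( ([\mathbb{E}_X]T)(\o)=T(\iota_X\o) \). Your closing remark on consistency matches how the paper subsequently uses this corollary.
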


Calculating directly from the definition of \( \mathbb{E} \), we have for \( X, X'\in C^\i(\wedge(E)) \),
\begin{equation}
	\mathbb{E}_X\circ \mathbb{E}_{X'} = \mathbb{E}_{X\wedge X'},\label{eq:comme},
\end{equation}
and treating a function \( f\in C^\i(M) \) as a section of \( \wedge^0(E) \), we have

\begin{equation}\label{eq:comme1}
	\mathbb{E}_f =f\lrcorner.
\end{equation}

If we let \( \mathbb{E}: C^\i(\wedge(E)) \to C^\i(End(\otimes(TM)\boxtimes \wedge(E))) \) be the map \( X\mapsto \mathbb{E}_X \), we conclude from \eqref{eq:comme} that \( \mathbb{E} \) is a (left) \( C^\i(M) \)-algebra action of \( C^\i(\wedge(E)) \) on \( \otimes(TM)\boxtimes \wedge(E) \), where the domain is equipped with wedge product and the standard scalar-multiplication by smooth functions, and the codomain is equipped with composition and scalar-multiplication by either \( L_f \) or \( R_f \) (See Definition \ref{def:LfRf}.) Moreover, \( \mathbb{E} \) is local, i.e. \( supp(\mathbb{E}_X)\subset \supp(X) \).

\subsection{Covariant Differentiation}
Let \( Y\in C^\i(\otimes(TM)) \) and let \( \mathbb{D}_Y \) be the vector bundle endomorphism of \( \otimes(TM) \boxtimes \wedge(E) \), defined for \( v\boxtimes \a\in \otimes(T_p M) \boxtimes \wedge(E_p) \) by
\begin{equation*}
	\mathbb{D}_Y(v\boxtimes \a):=v_{(1)}\otimes\left(\n_{\tilde{v}_{(2)}}Y\right) \boxtimes \a.
\end{equation*}
By Proposition \ref{prop:mega},

\begin{prop}\label{prop:fermioncreation}
	If \( \o\in C^\i(\wedge(E^*)) \), \( p\in M \) and \( v\boxtimes \a \in \otimes(T_pM) \boxtimes \wedge(E_p) \), then \[ \Phi_p(\mathbb{D}_Y(v\boxtimes \a))\o=\Phi_p (v\boxtimes \a)(\n_Y \o). \]
\end{prop}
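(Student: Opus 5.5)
By linearity, the plan is to check the identity on a simple element \( v\boxtimes\a \) with \( v=v_1\otimes\cdots\otimes v_i\in\otimes^i(T_pM) \). Both sides are then read off from Proposition \ref{prop:mega}. First I fix a smooth extension \( \tilde v \) of \( v \). The Sweedler factors \( \tilde v_{(1)}\otimes\tilde v_{(2)} \) of the tensor co-product are sums of sub-words, so \( \tilde v_{(1)} \) and \( \tilde v_{(2)} \) are themselves smooth extensions of the corresponding factors \( v_{(1)} \) and \( v_{(2)} \). Hence, by the definition \eqref{eq:Phip} of \( \Phi_p \) together with tensoriality of the higher covariant derivative in its vector-field slots,
\[
\Phi_p(\mathbb{D}_Y(v\boxtimes\a))\o=\Big(\n^{\,i+|Y|-\bullet}_{\tilde v_{(1)}\otimes \hat{\n}^\bullet_{\tilde v_{(2)}}Y}\,\o\Big)_p(\a).
\]
This value does not depend on the extensions. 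The point is that only the value of \( \tilde v_{(1)}\otimes\hat\n_{\tilde v_{(2)}}Y \) at \( p \) enters, and that value is the element \( v_{(1)}\otimes(\n_{\tilde v_{(2)}}Y)_p \) that appears in the definition of \( \mathbb{D}_Y \). I should check that \( \mathbb{D}_Y \) itself is well defined, but this follows from the same identity once it is proved, so I will not treat it as a separate issue.

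For the right-hand side, \( \Phi_p(v\boxtimes\a)(\n_Y\o)=\big(\n^i_{\tilde v}(\n^{|Y|}_Y\o)\big)_p(\a) \). By Proposition \ref{prop:mega}, applied with \( w=Y \) on the bundle \( \wedge(E^*) \) with its induced connection, \( \n^i_{\tilde v}\circ\n^{|Y|}_Y=\n_{\tilde v_{(1)}\otimes\hat\n^\bullet_{\tilde v_{(2)}}Y} \). Evaluating this at \( p \) and pairing with \( \a \) gives exactly the expression obtained in the first step, which proves the identity. If \( Y \) is not homogeneous, I split it into its homogeneous components and use linearity, which is harmless because everything in sight is linear in \( Y \).

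The only step needing care is the matching between the Sweedler factors of \( v \) and those of \( \tilde v \), including the edge terms \( v\otimes1 \) and \( 1\otimes v \). In particular, \( \hat\n^0_1Y=Y \) gives the top-order term \( v\otimes Y_p \). This matching holds because the tensor co-product commutes with restriction to \( p \). I expect no genuine obstacle; the statement is essentially a pointwise restatement of Proposition \ref{prop:mega}. As a consequence, \( \mathbb{D}_Y \) preserves \( \ker\Phi_p \) and descends to \( \U(E) \).
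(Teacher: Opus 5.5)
Your proof is correct and is the paper's argument: the paper proves this by citing Proposition \ref{prop:mega}, and you spell out the evaluation at \( p \) and the independence from the choice of extensions via tensoriality. One side remark is off: well-definedness of \( \mathbb{D}_Y \) does not follow from the identity, which only determines \( \mathbb{D}_Y(v\boxtimes\a) \) modulo \( \ker\Phi_p \); it follows directly from tensoriality of \( \hat\n^\bullet_{\tilde v_{(2)}}Y \) in \( \tilde v_{(2)} \), a fact you already use.
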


Therefore, \( \mathbb{D}_Y \) preserves the sub-bundle \( \cal{K} \) and passes to an endomorphism of \( \U(E) \), which we will denote by \( [\mathbb{D}_Y] \). We conclude from Proposition \ref{prop:fermioncreation} that
\begin{cor}\label{cor:dydual}
	\( [\mathbb{D}_Y] \) is dual to higher covariant derivative \( \n_Y \) on \( C^\i(\wedge(E^*)) \).
\end{cor}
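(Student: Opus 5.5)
The claim is that \( [\mathbb{D}_Y] \), restricted to a fiber \( \U(E;p) \), is the transpose of \( \n_Y \) acting on \( C^\i(\wedge(E^*)) \) under the natural pairing between \( \U(E;p) \) and \( C^\i(\wedge(E^*)) \). The plan is to get this directly from Proposition \ref{prop:fermioncreation}, in the same way that Corollary \ref{cor:exdual} follows from Proposition \ref{prop:bosoncreation}.

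First, \( [\mathbb{D}_Y] \) has to be well defined on \( \U(E) \), meaning \( \mathbb{D}_Y \) must preserve \( \cal{K}=\ker\Phi \). Fix \( p \) and take \( x\in \otimes(T_pM)\boxtimes\wedge(E_p) \) with \( \Phi_p(x)=0 \). By Proposition \ref{prop:fermioncreation}, linearity in \( x \) gives
\[ \Phi_p(\mathbb{D}_Y x)(\o)=\Phi_p(x)(\n_Y\o)=0 \]
for every \( \o\in C^\i(\wedge(E^*)) \). Here \( \n_Y\o \) is again a smooth section of \( \wedge(E^*) \), since \( Y \) is a finite-order tensor field. It follows that \( \Phi_p(\mathbb{D}_Y x)=0 \).

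Next, let \( T\in\U(E;p) \). Because \( \Phi_p \) is surjective onto \( \U(E;p) \) by definition of the image, we can choose \( x \) with \( \Phi_p(x)=T \). Then
\[ ([\mathbb{D}_Y]T)(\o)=\Phi_p(\mathbb{D}_Y x)(\o)=\Phi_p(x)(\n_Y\o)=T(\n_Y\o). \]
This is exactly the statement that \( [\mathbb{D}_Y] \) is the algebraic dual of \( \n_Y \) under the pairing \( (\U(E;p),C^\i(\wedge(E^*))) \). The identity does not depend on which lift \( x \) was chosen, because the previous step shows \( \mathbb{D}_Y \) kills differences of lifts modulo \( \cal{K} \).

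Finally, \( \n_Y \) is a differential operator of order at most the top degree of \( Y \). So by Proposition \ref{prop:diffop}, \( [\mathbb{D}_Y] \) coincides with the bundle homomorphism \( (\n_Y)^* \), and is therefore consistent.

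I expect no real obstacle, since everything reduces to Proposition \ref{prop:fermioncreation}. That proposition is itself just Proposition \ref{prop:mega} evaluated at \( p \):
\[ \n_v\circ\n_Y=\n_{v_{(1)}\otimes\n_{v_{(2)}}Y}, \]
which is then paired with \( \a \). The only point needing care is that \( \n_{\tilde v_{(2)}}Y \) is evaluated at \( p \), so the result is independent of the chosen extension \( \tilde v \), by tensoriality.
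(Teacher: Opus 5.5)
Your proposal is correct and follows the paper's own route. Proposition \ref{prop:fermioncreation} (which is Proposition \ref{prop:mega} evaluated at \( p \)) gives preservation of \( \cal{K} \), so \( \mathbb{D}_Y \) descends to \( [\mathbb{D}_Y] \), and surjectivity of \( \Phi_p \) onto \( \U(E;p) \) then turns the same identity into \( ([\mathbb{D}_Y]T)(\o)=T(\n_Y\o) \); your extra remark on consistency via Proposition \ref{prop:diffop} matches how the paper subsequently uses this corollary.
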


As a special case, treating a function \( f\in C^\i(M) \) as a section of \( \otimes^0(TM) \), we have \( \mathbb{D}_f=f\lrcorner \).

Calculating directly from the definitions of \( \mathbb{E} \) and \( \mathbb{D} \), we have for \( X\in C^\i(\wedge(E)) \) and \( Y, Y'\in C^\i(\otimes(TM)) \),
\begin{equation}
	\mathbb{D}_Y\circ \mathbb{D}_{Y'} = \mathbb{D}_{Y'_{(1)}\n_{Y'_{(2)}}Y},\label{eq:commd}
\end{equation}
and 
\begin{equation}
	\mathbb{E}_X\circ \mathbb{D}_Y = \mathbb{D}_{Y_{(1)}}\circ \mathbb{E}_{\n_{Y_{(2)}} X}.\label{eq:comm1}
\end{equation}

In particular, if \( Y\in C^\i(E) \), then \[ \mathbb{E}_X \circ \mathbb{D}_Y - \mathbb{D}_Y \circ \mathbb{E}_X  =\mathbb{E}_{\n_Y X}. \]

Letting \( \mathbb{D}: C^\i(\otimes(TM))\to C^\i(End(\otimes(TM)\boxtimes \wedge(E))) \) be the map \( Y\mapsto \mathbb{D}_Y \), we conclude from \eqref{eq:commd} that \( \mathbb{D} \) is a right \( \R \)-algebra action of \( C^\i(\otimes(TM)) \) on \( \otimes(TM)\boxtimes \wedge(E) \), where the domain is equipped with covariant product. Moreover, \( \mathbb{D} \) is local and \( C^\i(M) \)-linear where the domain has the standard multiplication by smooth functions, and the codomain has scalar multiplication given by \( R_f \).

\subsection{Adjoint Endomorphisms}
We will suppose throughout this sub-section that \( M \) is an oriented semi-Riemannian manifold without boundary, that \( \n \) is the Levi-Civita connection on \( E=TM \) and \( \Phi=\Phi(TM, \n,\n) \). Our goal will be to define endomorphisms on \( \U(M) \) adjoint to interior product and covariant differentiation. To define these, we will need to define a notion of adjoint in this context.

\begin{defn}\label{def:adjoint}
	Suppose \( F \) is a consistent endomorphism of \( \U(M) \) and let \( (F^*)^\dagger: \cal{D}^\bullet(M) \to \cal{D}^\bullet(M) \) be the \( L^2 \) adjoint of the differential operator \( F^* : \cal{D}^\bullet(M) \to \cal{D}^\bullet(M) \). Let \( F^\dagger = ((F^*)^\dagger)^* \). We call \( F^\dagger \) the \emph{adjoint} of \( F \).
\end{defn}

By Proposition \ref{prop:diffop},
\begin{prop}\label{prop:adjointidentities}
	If \( F, G \in C^\i(End(\U(M))) \) are consistent, then \( F\circ G \) and \( F^\dagger \) are consistent. Moreover, \( (F\circ G)^\dagger = G^\dagger \circ F^\dagger \) and \( (F^\dagger)^\dagger=F \).
\end{prop}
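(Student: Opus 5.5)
The plan is to reduce every claim to the correspondence of Proposition \ref{prop:diffop}: a consistent endomorphism \( F \) of \( \U(M) \) satisfies \( F=(F^*)^* \), with \( F^* \) a differential operator on \( \cal{D}^\bullet(M) \). Conversely, every differential operator \( T \) on \( \cal{D}^\bullet(M) \) gives a consistent endomorphism \( T^* \), since \( (T^*)^*=T \) there. So consistency just means ``being of the form \( T^* \) for a differential operator \( T \)'', and I would check each identity at the level of differential operators before dualizing.

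First, consistency of \( F^\dagger \). The \( L^2 \) adjoint (using the semi-Riemannian volume form and the induced pairing on forms, with \( M \) oriented and without boundary) of a differential operator \( F^*:\cal{D}^\bullet(M)\to\cal{D}^\bullet(M) \) is again a differential operator, of the same order, mapping compactly supported forms to compactly supported forms. By Proposition \ref{prop:diffop}(b), \( F^\dagger=((F^*)^\dagger)^* \) is therefore a consistent endomorphism.

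Next, composition. Writing \( F=(F^*)^* \) and \( G=(G^*)^* \), the algebraic dual reverses order, so for \( T\in\U(M;p) \) and \( \o \) a form,
\[ (F\circ G)(T)(\o)=G(T)(F^*\o)=T(G^*F^*\o). \]
Hence \( F\circ G=(G^*\circ F^*)^* \). Since \( G^*\circ F^* \) is a differential operator, \( F\circ G \) is consistent and \( (F\circ G)^*=G^*\circ F^* \). Then, using the usual order-reversing property of \( L^2 \) adjoints,
\[ (F\circ G)^\dagger=\big((G^*F^*)^\dagger\big)^*=\big((F^*)^\dagger(G^*)^\dagger\big)^*=\big((G^*)^\dagger\big)^*\circ\big((F^*)^\dagger\big)^*=G^\dagger\circ F^\dagger, \]
where the third equality is the dual-reversal computation just done.

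Finally, involutivity. Since \( F^\dagger \) is consistent with \( (F^\dagger)^*=(F^*)^\dagger \) (by \( (T^*)^*=T \)), we get
\[ (F^\dagger)^\dagger=\big(((F^*)^\dagger)^\dagger\big)^*=(F^*)^*=F, \]
using \( (T^\dagger)^\dagger=T \) for \( L^2 \) adjoints of differential operators and consistency of \( F \).

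The main obstacle is the bookkeeping of the two dualities: Proposition \ref{prop:diffop} must be applied on the correct side, so that \( (T^*)^*=T \) and \( F=(F^*)^* \) are each used for the appropriate kind of object. One also needs the \( L^2 \) adjoint to be a genuine differential operator on compactly supported forms, which relies on the absence of boundary so that integration by parts produces no boundary terms.
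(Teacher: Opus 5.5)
Your proposal is correct and takes the same route as the paper, which obtains the proposition directly from Proposition \ref{prop:diffop}. Your argument spells out what that citation leaves implicit: the order reversal \( (AB)^*=B^*\circ A^* \) for algebraic duals of differential operators, and the standard properties of formal \( L^2 \) adjoints, namely \( (AB)^\dagger=B^\dagger A^\dagger \) and \( (A^\dagger)^\dagger=A \).
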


By Corollaries \ref{cor:exdual} and \ref{cor:dydual}, the endomorphisms \( [\mathbb{E}_X] \) and \( [\mathbb{D}_Y] \) are both consistent and thus have adjoints. To derive explicit formulas for them, we will need to dualize the Hodge star operator on \( \cal{D}^\bullet \) to yield an endomorphism of \( \U_\bullet(M) \).

\begin{lem}\label{lem:hodgestar}
	Suppose \( V \) is an oriented real vector space of dimension \( n<\i \), equipped with a non-degenerate bilinear pairing \( <\cdot,\cdot> \). Let \( \hat{\star}: \wedge(V) \to \wedge(V) \) be the Hodge Star map, and let \( \star: \wedge(V^*) \to \wedge(V^*) \) be the Hodge Star on the dual space \( V^* \) (equipped with the bilinear pairing arising from the isomorphism \( V\simeq V^* \) induced by \( <\cdot,\cdot> \).) If \( \wedge(V^*) \) is identified with \( (\wedge(V))^* \) by the pairing \( (\a^1\wedge\cdots\wedge \a^k, v_1,\dots,v_k) = det(\a^i(v_j)) \), then \[ \hat{\star}^* = \star^{-1}. \]
\end{lem}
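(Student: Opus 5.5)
The plan is to reduce to an oriented orthonormal basis and then check the identity on basis elements. Choose an oriented basis \( e_1,\dots,e_n \) of \( V \) that is orthonormal for \( <\cdot,\cdot> \), meaning \( <e_i,e_j>=\epsilon_i\delta_{ij} \) with \( \epsilon_i=\pm 1 \). Let \( e^1,\dots,e^n \) be the dual basis. Under the isomorphism \( V\simeq V^* \) induced by \( <\cdot,\cdot> \), \( e_i \) maps to \( \epsilon_i e^i \). Hence \( (e^i) \) is orthonormal for the induced pairing with the same signs \( \epsilon_i \), and it is oriented compatibly.

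For a subset \( I\subset\{1,\dots,n\} \), write \( e_I \) and \( e^I \) for the ordered wedges, \( I^c \) for the complement, \( \sigma(I) \) for the sign of the shuffle \( (I,I^c) \), and \( \epsilon_I=\prod_{i\in I}\epsilon_i \). The determinant pairing gives \( (e^I,e_J)=\delta_{IJ} \), so \( \{e^I\} \) is the basis dual to \( \{e_I\} \). Under the convention \( \a\wedge\hat\star\b=<\a,\b>\,\mathrm{vol} \), the Hodge stars act on basis elements by
\[ \hat\star e_I=\sigma(I)\,\epsilon_I\, e_{I^c}, \qquad \star e^I=\sigma(I)\,\epsilon_I\, e^{I^c}. \]

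Next I compute the transpose. For \( |I|=k \) and \( |J|=n-k \), we have \( (\hat\star^* e^J, e_I)=(e^J,\hat\star e_I)=\sigma(I)\epsilon_I\,\delta_{J,I^c} \). Therefore
\[ \hat\star^* e^{I^c}=\sigma(I)\,\epsilon_I\, e^{I}. \]
It then suffices to show that \( \star\hat\star^* e^{I^c}=e^{I^c} \), that is, \( \star\hat\star^*=\mathrm{id} \). Applying \( \star \) to the formula above gives \( \sigma(I)\epsilon_I\,\star e^I=\sigma(I)^2\epsilon_I^2\,e^{I^c}=e^{I^c} \), as required. Since \( \star \) is invertible, this yields \( \hat\star^*=\star^{-1} \).

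The main obstacle is bookkeeping signs, not any conceptual step. One point to watch is that the transpose carries \( e^{I^c} \) back to a multiple of \( e^I \), which is why the sign \( \sigma(I) \) appears rather than \( \sigma(I^c) \). The other is that both stars must be taken with the same convention and that the duals of orthonormal bases have the same signature signs. If the paper's convention for \( \star \) differs, for instance \( \hat\star \) defined using \( \b\wedge\hat\star\a \), the shuffle signs will differ by \( (-1)^{k(n-k)} \) consistently in both stars. The conclusion \( \hat\star^*=\star^{-1} \) is unaffected, because the identical sign appears in \( \hat\star \) and in \( \star \), and its square is \( 1 \).
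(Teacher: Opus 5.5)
Your proof is correct and follows essentially the same route as the paper: fix an oriented orthonormal basis, compute \( \hat{\star} \), \( \star \) and the transpose on the bases \( e_I \), \( e^I \), and compare signs. The only cosmetic difference is that the paper shows \( \hat{\star}^* = (-1)^{k(n-k)}\prod_i \langle e_i,e_i\rangle\,\star \) and then tacitly uses \( \star^2=(-1)^{k(n-k)}\prod_i\langle e_i,e_i\rangle \), whereas you check \( \star\circ\hat{\star}^*=\mathrm{id} \) directly and so avoid that identity; your choice of the dual orientation on \( V^* \) is also the one the paper's computation implicitly assumes.
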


\begin{proof}
	Let \( (x_1,\dots, x_n) \) be an ordered orthonormal basis of \( V \) and let \( (x^1,\dots, x^n) \) denote the dual basis of \( V^* \). Let \( I=(i_1,\dots, i_k) \) and \( J=(j_1,\dots, j_{n-k}) \) be ordered subsets of \( (1,\dots, n) \) and let \( x^I=x^{i_1}\wedge\cdots\wedge x^{i_k} \) (resp. \( X_J=x_{j_1}\wedge\cdots\wedge x_{j_{n-k}} \).) Then, \( x^I(\hat{\star} x_J) \) and \( \star^{-1}(x^I)(x_J) \) are both zero unless \( J=(1,\dots, n)\setminus I \), in which case \[ x^I(\hat{\star} x_J) = x^I(sgn(\sigma)(-1)^{k(n-k)}s(J) x_I) = sgn(\sigma)(-1)^{k(n-k)}s(J), \] where \( \sigma \) is the permutation \( IJ \) and \( s(J) \) is the product \( \prod_{j\in J}<x_j,x_j> \). On the other hand, \[ \star(x^I) (x_J) = sgn(\sigma)s(I), \] and so \( \hat{\star}^* = (-1)^{k(n-k)}s(1,\dots,n)\star= \star^{-1}. \)
\end{proof}

By Proposition \ref{prop:diffop}, Hodge star \( \star: \cal{D}^\bullet(M)\to \cal{D}^{n-\bullet}(M) \) dualizes to an endomorphism of \( \U(M) \) which we will denote by \( [\perp] \). Since \( \star \) commutes with the Levi-Civita covariant derivative, we know by Lemma \ref{lem:hodgestar} that \( [\perp] \) lifts via \( \Phi \) to a bundle endomorphism \( \perp \) of \( \otimes (TM) \boxtimes \wedge (TM) \), given by the following formula:
\begin{equation}\label{eq:perp}
	\perp(v\boxtimes \a) = v\boxtimes \star^{-1}(\a),
\end{equation}
for \( v\in \otimes (T_pM) \) and \( \a\in \wedge(T_pM) \), where in \eqref{eq:perp}, the map \( \star \) is the usual Hodge star on \( \wedge(T_p M) \).

\subsubsection{Adjoint of Interior Product}
Using the \( L^2 \) inner product on differential forms, we know that the adjoint of interior product \( \iota_X \) for a vector field \( X\in C^\i(TM) \) is given on \( k \)-forms by \( \iota_X^\dagger = (-1)^k \star^{-1}\iota_X \star \). More generally, by iterating using \( \iota_{\alpha\wedge \beta}=\iota_\beta\circ\iota_\alpha \), we have \[ \iota_X^\dagger = (-1)^{rk} \star^{-1}\iota_X\star, \] where \( X \in C^\i(\wedge^r(TM)) \), again acting on \( k \)-forms. 

Therefore, the adjoint of \( [\mathbb{E}_X] \) is given on \( \U_k(M) \) by \[ [\mathbb{E}_X]^\dagger = (-1)^{r(k+r)}[\perp] \circ[\mathbb{E}_X] \circ[\perp]^{-1}, \] which lifts via \( \Phi \) to the endomorphism \[ \mathbb{E}_X^\dagger := (-1)^{r(k+r)}\perp\circ \mathbb{E}_X \circ\perp^{-1} \] of \( \otimes(TM)\boxtimes \wedge(TM) \).

A calculation in coordinates shows that if \( \a_1,\dots,\a_k\in T_pM \), then \[ \mathbb{E}_X^\dagger(\a_1\wedge\dots\wedge \a_k) = \sum_{L_r\subset (1,\dots,k)} (-1)^{l_1+\cdots+l_r+r(r+1)/2}<X(p),\a_{L_r}> \cdot \, \a_1\wedge\cdots\wedge\widehat{\a_{L_r}}\wedge\cdots\wedge \a_k, \] where the sum is taken over all length-\( r \) ordered subsets \( L_r=(l_1,\dots,l_r) \) of \( (1,\dots,k) \), and \( \a_{L_r}=\a_{l_1}\wedge\dots\wedge \a_{l_r} \). The notation \( \a_1\wedge\cdots\wedge\widehat{\a_{L_r}}\wedge\cdots\wedge \a_k \) refers to the \( (k-r) \)-vector resulting from wedging together the vectors \( \a_i \), \( i\in (1\dots,k)\setminus L_r \).

If \( r=1 \), this simplifies to \[ \mathbb{E}_X^\dagger(\a_1\wedge\cdots\wedge \a_k)=\sum_{l=1}^k<X(p),\a_l>(-1)^{l+1}\cdot \a_1\wedge \cdots\wedge \widehat{\a_l}\wedge\cdots\wedge \a_k. \]

By \eqref{eq:comm1},
\begin{equation}\label{eq:comm2}
	\mathbb{E}_X^\dagger\circ \mathbb{D}_Y = \mathbb{D}_{Y_{(1)}}\circ \mathbb{E}_{\n_{Y_{(2)}} X}^\dagger,
\end{equation}

so for a general element \( v\boxtimes \a \) of \( \otimes(TM)\boxtimes \wedge(TM) \), 

\begin{equation}\label{exdagger}
	\mathbb{E}_X^\dagger(v\boxtimes \a) = v_{(1)}\boxtimes \mathbb{E}_{\n_{v_{(2)}}X}^\dagger \a.
\end{equation}

Additionally, from the definition of \( \mathbb{E}_X^\dagger \), we know
\begin{equation}\label{eq:compedagger}
	\mathbb{E}_X^\dagger \circ \mathbb{E}_{X'}^\dagger = \mathbb{E}_{X'\wedge X}^\dagger.
\end{equation}
A computation in coordinates shows:
\begin{prop}\label{prop:eedagger}
	If \( X, Y\in C^\i(TM) \), then \( \{\mathbb{E}_X, \mathbb{E}_Y^\dagger\}=<X,Y>\lrcorner \), where \( <X,Y> \) is the function on \( M \) whose value at \( p \) is \( g(X(p),Y(p)) \) where \( g \) is the Riemannian metric on \( M \).
\end{prop}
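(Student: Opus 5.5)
The plan is to verify the identity directly on the lifted bundle \( \otimes(TM)\boxtimes\wedge(TM) \), using the Sweedler-type formulas already established for \( \mathbb{E}_X \) and \( \mathbb{E}_Y^\dagger \). Equality there implies the statement for \( [\mathbb{E}_X] \) and \( [\mathbb{E}_Y]^\dagger \) on \( \U(M) \), since all three operators descend via \( \Phi \). Fix \( p\in M \) and \( v\boxtimes\a\in\otimes(T_pM)\boxtimes\wedge^k(T_pM) \). By definition,
\[ \mathbb{E}_X(v\boxtimes\a)=v_{(1)}\boxtimes(\n_{v_{(2)}}X)\wedge\a, \]
and by \eqref{exdagger},
\[ \mathbb{E}_Y^\dagger(v\boxtimes\a)=v_{(1)}\boxtimes\mathbb{E}^\dagger_{\n_{v_{(2)}}Y}\a. \]
Here, for a vector \( b\in T_pM \), the pointwise operator \( \mathbb{E}^\dagger_b \) is the metric contraction given by the \( r=1 \) formula above:
\[ \mathbb{E}_b^\dagger(\a_1\wedge\cdots\wedge\a_k)=\sum_l (-1)^{l+1}<b,\a_l>\,\a_1\wedge\cdots\widehat{\a_l}\cdots\wedge\a_k. \]

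Composing the two formulas and using co-associativity gives
\[ \{\mathbb{E}_X,\mathbb{E}_Y^\dagger\}(v\boxtimes\a)= v_{(1)}\boxtimes\Big[(\n_{v_{(2)}}X)\wedge \mathbb{E}^\dagger_{\n_{v_{(3)}}Y}\a+\mathbb{E}^\dagger_{\n_{v_{(3)}}Y}\big((\n_{v_{(2)}}X)\wedge\a\big)\Big]. \]
In the second composite the Sweedler indices naturally come out in the order \( (\n_{v_{(3)}}X,\n_{v_{(2)}}Y) \). I will use cocommutativity of the tensor (shuffle) co-product to relabel them so that both terms carry the same pairing of indices. Then I apply the elementary exterior-algebra identity
\[ a\wedge\mathbb{E}_b^\dagger\a+\mathbb{E}_b^\dagger(a\wedge\a)=<a,b>\a \qquad (a,b\in T_pM), \]
which holds because \( \mathbb{E}_b^\dagger \) is an antiderivation of degree \( -1 \) with \( \mathbb{E}_b^\dagger a=<b,a> \). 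This collapses the bracket to
\[ v_{(1)}\boxtimes <\n_{v_{(2)}}X,\n_{v_{(3)}}Y>\,\a. \]

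Next, since the Levi-Civita connection is metric, the metric \( g \) is parallel, so all of its higher covariant derivatives vanish. Proposition \ref{prop:contract} together with Proposition \ref{prop:leibniz}, applied to \( g(X,Y) \), then gives
\[ \n_w<X,Y>=<\n_{w_{(1)}}X,\n_{w_{(2)}}Y> \]
for every \( w \). Substituting \( w=v_{(2)} \) turns the expression into \( v_{(1)}\boxtimes(\n_{v_{(2)}}<X,Y>)\a \). After one more cocommutative relabeling, this is exactly the lifted action
\[ <X,Y>\lrcorner(v\boxtimes\a)=(\n_{v_{(1)}}<X,Y>)\,v_{(2)}\boxtimes\a. \]

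I expect the main obstacle to be bookkeeping rather than anything conceptual, in two places. First, the Sweedler orderings must be justified when the two composites produce the factors in different slots; cocommutativity of the shuffle co-product, together with the fact that only the scalar factor and the total tensor \( v_{(1)} \) matter, is what I will rely on. Second, the sign in the \( r=1 \) formula for \( \mathbb{E}^\dagger \) must be checked to give an anticommutator rather than a commutator. I would settle the sign by testing on \( \a=\a_1 \), a single vector, first. If the lifted identity turned out to fail only by kernel elements, I would instead verify it after applying \( \Phi_p \). Pairing with a form \( \o \) reduces the claim to the classical identity \( \iota_X\iota_X^\dagger+\iota_X^\dagger\iota_X=<X,Y> \) (with \( \iota_Y^\dagger = Y^\flat\wedge \)) on \( \cal{D}^\bullet(M) \), via Corollary \ref{cor:exdual} and Proposition \ref{prop:adjointidentities}.
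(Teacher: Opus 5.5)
Your proof is correct. It is a coordinate-free version of what the paper only asserts as ``a computation in coordinates.''

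Expanding both composites with the definition of \( \mathbb{E}_X \) and \eqref{exdagger} gives \( v_{(1)}\boxtimes(\nabla_{v_{(2)}}X)\wedge\mathbb{E}^\dagger_{\nabla_{v_{(3)}}Y}\alpha \) and \( v_{(1)}\boxtimes\mathbb{E}^\dagger_{\nabla_{v_{(2)}}Y}\big((\nabla_{v_{(3)}}X)\wedge\alpha\big) \). Swapping the last two Sweedler factors is legitimate for two reasons. The tensor co-product in the paper is the unshuffle co-product, which is cocommutative, so the iterated co-product satisfies \( (\mathrm{id}\otimes\tau)\Delta^{(2)}=\Delta^{(2)} \). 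Also, \( w\mapsto(\nabla_w X)_p \) is linear in \( w\in\otimes(T_pM) \).

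The \( r=1 \) formula identifies \( \mathbb{E}^\dagger_b \) with contraction by \( b^\flat \). Hence \( a\wedge\iota_{b^\flat}\alpha+\iota_{b^\flat}(a\wedge\alpha)=\langle a,b\rangle\alpha \) gives an anticommutator, as required. Then \( \nabla^j g=0 \), together with Propositions \ref{prop:contract} and \ref{prop:leibniz}, turns \( \langle\nabla_{v_{(2)}}X,\nabla_{v_{(3)}}Y\rangle \) into \( \nabla_{v_{(2)}}\langle X,Y\rangle \). So the identity holds exactly on \( \otimes(TM)\boxtimes\wedge(TM) \), not merely modulo \( \ker\Phi \). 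Your fallback is therefore unnecessary; as written it should read \( \iota_X\iota_Y^\dagger+\iota_Y^\dagger\iota_X \).

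A frame computation would have to carry higher Christoffel symbols and express metric compatibility as relations among them. Your route isolates the three structural inputs: cocommutativity and coassociativity of \( \Delta \), the Clifford relation in \( \wedge(T_pM) \), and parallelism of \( g \). It also works unchanged in semi-Riemannian signature. The coordinate route's only advantage is that signs can be checked directly against the paper's explicit formula for \( \mathbb{E}^\dagger \).
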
 

Thus, \[ \mathbb{E}+\mathbb{E}^\dagger: C^\i(TM)\to C^\i(End(\otimes(TM)\boxtimes \wedge(TM))) \] \[ X\mapsto \mathbb{E}_X + \mathbb{E}_X^\dagger \] extends to a \( C^\i(M) \)-algebra map from smooth sections of the Clifford bundle \( Cl(TM) \) to \(  C^\i(End(\otimes(TM)\boxtimes \wedge(TM))) \), and the image (up to a sign) of the volume form under this map is \( \perp \). That is, locally on \( \otimes(TM)\boxtimes \wedge^k(TM) \),

\[ \perp = (-1)^{k(k-1)/2} \prod_{i=1}^n <e_i,e_i>(\mathbb{E}_{e_i} + \mathbb{E}_{e_i}^\dagger), \] where \( (e_i) \) is an orthonormal local frame.

\subsubsection{Adjoint of Covariant Differentiation}
For a vector field \( X\in C^\i(TM) \), the adjoint of covariant differentiation \( \n_X \) on differential forms is the map \( \eta \mapsto -div(X)\eta - \n_X \eta \). Therefore, \( [\mathbb{D}_X]^\dagger = -div(X)\lrcorner - [\mathbb{D}_X] \). So, let us define \( \mathbb{D}_X^\dagger\in C^\i(End(\otimes(TM)\boxtimes\wedge(TM))) \) to be the map
\[ \mathbb{D}_X^\dagger := -div(X)\lrcorner - \mathbb{D}_X. \]
This is given explicitly as
\begin{equation}\label{eq:dxdagger}
	\mathbb{D}_X^\dagger(v\otimes \a)= -v_{(1)}\n_{\tilde{v}_{(2)}}(div(X)+X)\otimes \a.
\end{equation}

To define \( \mathbb{D}_X^\dagger \) for more general tensor fields \( X\in C^\i(\otimes(TM)) \), we use the composition property \[ [\mathbb{D}_X]^\dagger \circ [\mathbb{D}_Y]^\dagger = ([\mathbb{D}_Y]\circ [\mathbb{D}_X])^\dagger = [\mathbb{D}_{X_{(1)}\n_{X_{(2)}} Y}]^\dagger \] recursively and define \[ \mathbb{D}_{XY}^\dagger:=\mathbb{D}_X^\dagger \circ \mathbb{D}_Y^\dagger-\mathbb{D}_{X_{(1')}\n_{X_{(2')}} Y}^\dagger, \] where \( X_{(1')} \) and \( X_{(2')} \) are the Sweedler factors of \( \Delta X - X\otimes 1 \). However, a closed-form expression like \eqref{eq:dxdagger} seems difficult to derive in this more general case.

For vector fields \( X \) and \( Y \) on \( M \), \[ [\mathbb{D}_X, \mathbb{D}_Y^\dagger] = X(div(Y))\lrcorner - \mathbb{R}_{X,Y}+ \mathbb{D}_{[X,Y]}, \] 

\[ [\mathbb{D}_X, \mathbb{D}_Y] = \mathbb{R}_{X,Y} - \mathbb{D}_{[X,Y]}, \] and \[ [\mathbb{D}_X^\dagger, \mathbb{D}_Y^\dagger] = - div [X,Y]\lrcorner + \mathbb{R}_{X,Y} - \mathbb{D}_{[X,Y]}, \]
where \( \mathbb{R}_{X,Y}=\mathbb{D}_{Y\otimes X - X\otimes Y} \).

\subsection{Combined Actions}\label{section:combined}
Let \( B=C^\i(\otimes(TM)\boxtimes \wedge(E)) \) and consider the map \( \sharp: B\otimes B\to B \) given by 
\begin{equation}\label{sharptimes}
	(v\boxtimes \a) \sharp (w\boxtimes \b) =  (w_{(1)}\odot v)\boxtimes (\n_{w_{(2)}}\a)\wedge \b.
\end{equation}
In \eqref{sharptimes}, the square tensor products are over \( C^\i(M) \). Note that this takes the form of the Hopf-algebraic smash product, except that the co-product on \( C^\i(\wedge(E)) \) is over \( C^\i(M) \) and not \( \R \).

\begin{prop}
	The map \( \sharp \) is well-defined and turns \( B \) into a filtered graded unital associative \( \R \)-algebra.
\end{prop}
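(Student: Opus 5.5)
The plan is to verify directly from \eqref{sharptimes} that \( \sharp \) descends to the tensor products over \( C^\i(M) \), and then to check associativity, the unit and the filtration/grading. The tools are the following properties of \( A=C^\i(\otimes(TM)) \) established earlier:
\begin{itemize}
\item \( \odot \) is associative and \( C^\i(M) \)-linear in its first argument;
\item \( \D \) is compatible with \( \odot \), i.e. \( \D(u\odot w)=(u_{(1)}\odot w_{(1)})\otimes (u_{(2)}\odot w_{(2)}) \);
\item \( \n_{u\odot w}=\n_u\circ\n_w \) (Proposition \ref{prop:mega});
\item the Leibniz rule for \( \n^\bullet \) on \( \wedge(E) \) (Proposition \ref{prop:leibniz});
\item co-associativity and co-commutativity of the tensor (shuffle) co-product.
\end{itemize}

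\emph{Well-definedness.} There are two balancing conditions to check.

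First, in the right factor, compare \( (fw)\boxtimes\b \) with \( w\boxtimes f\b \). Write \( \D(fw)=(fw_{(1)})\otimes w_{(2)} \). Since \( \odot \) is \( C^\i(M) \)-linear in its first slot, \( (fw_{(1)})\odot v=f\,(w_{(1)}\odot v) \), and \( f \) can then be moved across the \( C^\i(M) \)-tensor product onto \( (\n_{w_{(2)}}\a)\wedge\b \). This gives the same element as \( w\boxtimes f\b \).

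Second, in the left factor, compare \( (fv)\boxtimes\a \) with \( v\boxtimes f\a \). Expand \( w_{(1)}\odot(fv) = w_{(1)}\otimes\n_{w_{(2)}}(fv) \) by Proposition \ref{prop:leibniz}, treating \( f \) as a section of \( \wedge^0 \). This gives \( w_{(1)}\,(\n_{w_{(2)}}f)\,\n_{w_{(3)}}v \). On the other side, \( \n_{w_{(2)}}(f\a)=(\n_{w_{(2)}}f)(\n_{w_{(3)}}\a) \). The scalar \( \n_{w_{(i)}}f \) can be moved across \( \boxtimes \). Co-associativity and co-commutativity then let me match the Sweedler indices, so the two expressions agree.

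I expect this bookkeeping to be the main obstacle. The difficulty is that \( \odot \) is only \( \R \)-linear in its second argument, while all tensor products are over \( C^\i(M) \). The \( f \) absorbed by \( v \) must therefore be tracked through the co-product of \( w \), which is exactly where co-commutativity of the shuffle co-product is needed to reorder factors. I would first do the case \( w\in C^\i(TM) \) explicitly as a sanity check, and then argue for general \( w \) by the Sweedler identity above.

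\emph{Associativity.} I compute both bracketings.

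For the left bracketing, \( ((v\boxtimes\a)\sharp(w\boxtimes\b))\sharp(u\boxtimes\g) \), the Leibniz rule on the wedge product gives
\[ (u_{(1)}\odot w_{(1)}\odot v)\boxtimes(\n_{u_{(2)}}\n_{w_{(2)}}\a)\wedge(\n_{u_{(3)}}\b)\wedge\g. \]

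For the right bracketing, \( (v\boxtimes\a)\sharp((w\boxtimes\b)\sharp(u\boxtimes\g)) \) first equals
\[ \big((u_{(1)}\odot w)_{(1)}\odot v\big)\boxtimes(\n_{(u_{(1)}\odot w)_{(2)}}\a)\wedge(\n_{u_{(2)}}\b)\wedge\g. \]
I then apply, in order:
\begin{itemize}
\item compatibility of \( \D \) with \( \odot \), splitting \( (u_{(1)}\odot w) \) into \( (u_{(1)}\odot w_{(1)}) \) and \( (u_{(2)}\odot w_{(2)}) \);
\item \( \n_{u\odot w}=\n_u\n_w \);
\item associativity of \( \odot \);
\item co-associativity, to relabel the Sweedler indices.
\end{itemize}
This matches the left bracketing.

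\emph{Unit, filtration and grading.} The element \( 1\boxtimes 1 \) is a two-sided unit: \( \D 1=1\otimes1 \), \( \n_1=Id \), and \( w_{(1)}\odot 1\otimes\n_{w_{(2)}}1=w \), since \( \n_{w_{(2)}}1 \) vanishes unless \( w_{(2)} \) has order \( 0 \).

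For the filtration, \( w_{(1)}\odot v \) has tensor order at most \( |v|+|w_{(1)}|\le |v|+|w| \). For the grading, \( \n \) preserves the form degree and \( \wedge \) adds degrees.
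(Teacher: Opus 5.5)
Your proposal is correct and is the argument the paper intends. The paper gives no written proof of this proposition; it only cites the properties you list as what makes the smash product work: compatibility of \( \odot \) with \( \D \), \( \n_{u\odot w}=\n_u\circ\n_w \) (Proposition \ref{prop:mega}), and the Leibniz rules \( \n_v 1=\e(v)1 \), \( \n_v(\a\wedge\b)=(\n_{v_{(1)}}\a)\wedge(\n_{v_{(2)}}\b) \). Your explicit left \( C^\i(M) \)-balancing check, where co-commutativity of the tensor co-product is needed to exchange the Sweedler factors acting on \( f \), fills in a detail the paper leaves implicit.
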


Consider the map \( \mathbb{DE}: B \to C^\i(End(\otimes(TM)\boxtimes \wedge(E))) \) given by \( \mathbb{DE}_{v\boxtimes \a}= \mathbb{D}_v\circ \mathbb{E}_\a \).

\begin{prop}
	The map \( \mathbb{DE} \) is a well-defined left \( \R \)-algebra action of \( B \) on the bundle \( \otimes(TM)\boxtimes \wedge(E) \). Moreover, \( \mathbb{DE} \) descends via \( \Phi \) to a left \( \R \)-algebra action \( [\mathbb{DE}] \) on \( \U(E) \) given by \( [\mathbb{DE}]_{v\boxtimes \a}=[\mathbb{D}]_v\circ [\mathbb{E}]_\a \). Moreover, \( \mathbb{DE} \) and \( [\mathbb{DE}] \) are local and \( C^\i(M) \)-linear if the codomain has scalar multiplication given by \( R_f \).
\end{prop}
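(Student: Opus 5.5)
Our plan is to reduce everything to the two composition rules already recorded, \eqref{eq:commd} and \eqref{eq:comm1}, together with \eqref{eq:comme}. The claims are: well-definedness of \( \mathbb{DE} \) over \( C^\i(M) \); the algebra-map identity \( \mathbb{DE}_{x\sharp y}=\mathbb{DE}_x\circ \mathbb{DE}_y \); descent to \( \U(E) \); and locality/linearity.

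\textbf{Well-definedness.} The domain \( B \) is a tensor product over \( C^\i(M) \), so we need \( \mathbb{D}_{fv}\circ\mathbb{E}_\a=\mathbb{D}_v\circ\mathbb{E}_{f\a} \). We would check this from the definitions. First, \( \mathbb{D}_{fv}=\mathbb{D}_v\circ \mathbb{D}_f \) by \eqref{eq:commd}: since \( f \) has degree \( 0 \), \( \mathbb{D}_v\circ\mathbb{D}_f=\mathbb{D}_{f\odot v} \), and \( f\odot v = fv \). Second, \( \mathbb{D}_f=f\lrcorner=\mathbb{E}_f \). Third, \( \mathbb{E}_f\circ\mathbb{E}_\a=\mathbb{E}_{f\a} \) by \eqref{eq:comme}. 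Chaining these gives the required identity.

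\textbf{Algebra map.} We would compute on simple tensors:
\[ \mathbb{DE}_{v\boxtimes\a}\circ\mathbb{DE}_{w\boxtimes\b}=\mathbb{D}_v\circ\mathbb{E}_\a\circ\mathbb{D}_w\circ\mathbb{E}_\b. \]
By \eqref{eq:comm1}, the middle factor becomes \( \mathbb{E}_\a\circ\mathbb{D}_w=\mathbb{D}_{w_{(1)}}\circ\mathbb{E}_{\n_{w_{(2)}}\a} \). Then \eqref{eq:commd} gives \( \mathbb{D}_v\circ\mathbb{D}_{w_{(1)}}=\mathbb{D}_{w_{(1)}\odot v} \), and \eqref{eq:comme} gives \( \mathbb{E}_{\n_{w_{(2)}}\a}\circ\mathbb{E}_\b=\mathbb{E}_{(\n_{w_{(2)}}\a)\wedge\b} \). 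The result is exactly \( \mathbb{DE}_{(v\boxtimes\a)\sharp(w\boxtimes\b)} \). The unit is \( 1\boxtimes 1 \), which maps to the identity.

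One could also simply verify \eqref{eq:comm1} directly. That is a Sweedler computation using Proposition \ref{prop:leibniz} for \( \wedge \), with the co-associativity of \( \D \) arranging the factors as \( v_{(1)}\otimes \n_{v_{(2)}}w_{(1)} \) and \( \n_{v_{(3)}}\n_{w_{(2)}}\a \). Here Proposition \ref{prop:mega} converts \( \n_{v_{(2)}}\circ\n_{w_{(2)}} \) into the covariant product. This bookkeeping, keeping the Sweedler indices compatible with the compatibility diagram between \( \odot \) and \( \D \), is the main obstacle if one does not take \eqref{eq:comm1} as given.

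\textbf{Descent, locality and linearity.} Both \( \mathbb{D}_v \) and \( \mathbb{E}_\a \) preserve \( \cal{K} \), by Propositions \ref{prop:fermioncreation} and \ref{prop:bosoncreation}, so their composite descends and \( [\mathbb{DE}]_{v\boxtimes\a}=[\mathbb{D}_v]\circ[\mathbb{E}_\a] \). The algebra identity passes to the quotient because \( \Phi \) is surjective. Equivalently, \( [\mathbb{DE}]_{v\boxtimes\a} \) is dual to the operator \( \o\mapsto\iota_\a\n_v\o \), whose composition law mirrors \( \sharp \) by Corollary \ref{cor:interior} and Proposition \ref{prop:mega}. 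Locality follows since both factors are local, so the support is contained in \( \supp(v)\cap\supp(\a) \). For \( C^\i(M) \)-linearity with respect to \( R_f \), we have \( \mathbb{DE}_{f(v\boxtimes\a)}=\mathbb{D}_v\circ\mathbb{E}_\a\circ f\lrcorner \), using \( \mathbb{E}_{\a f}=\mathbb{E}_\a\circ\mathbb{E}_f \) and \( \mathbb{E}_f=f\lrcorner \).
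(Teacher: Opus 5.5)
Your proof is correct and follows the route the paper sets up. The paper gives no separate argument for this proposition, but it records \eqref{eq:comme}, \eqref{eq:commd}, \eqref{eq:comm1} and Propositions \ref{prop:bosoncreation}, \ref{prop:fermioncreation} precisely so that well-definedness, \( \mathbb{DE}_{x\sharp y}=\mathbb{DE}_x\circ\mathbb{DE}_y \), descent to \( \U(E) \) and \( R_f \)-linearity follow by the substitutions you wrote.

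The only step to tighten is locality: the support of \( x=\sum_i v_i\boxtimes\a_i \) can be strictly smaller than \( \bigcup_i \operatorname{supp}(v_i)\cap\operatorname{supp}(\a_i) \). To fix this, take a point outside \( \operatorname{supp}(x) \) and a bump function \( \phi \) equal to \( 1 \) near that point and supported where \( x \) vanishes, write \( x=(1-\phi)x=\sum_i ((1-\phi)v_i)\boxtimes\a_i \), and then use your well-definedness check.
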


Specializing to the case that \( E=T^*M \), the trace of the restriction of \( \mathbb{DE} \) to \( C^\i(TM\boxtimes T^*M) \) is a well-defined endomorphism of \( \otimes(TM)\boxtimes \wedge(T^*M) \), which we will denote \( tr(\mathbb{DE}) \). The map \( tr(\mathbb{DE}) \) descends via \( \Phi \) a well-defined endomorphism of \( \U(T^*M) \), denoted by \( [tr(\mathbb{DE})] \). This endomorphism is the trace of the restriction of \( [\mathbb{DE}] \) to \( C^\i(TM\boxtimes T^*M) \). That is, \( tr([\mathbb{DE}])=[tr(\mathbb{DE})] \). Identifying \( \U(T^*M) \) with \( \U(TM)=\U(M) \) using the metric, \( tr([\mathbb{DE}]) \) is dual (in the sense of Proposition \ref{prop:diffop}) to \( -\delta \), where \( \delta=(-1)^k \star^{-1}d\star \) is the co-differential on \( k \)-forms.

Now consider the map \( \mathbb{DE^\dagger} : C^\i(\otimes(TM)\boxtimes \wedge(E)) \to C^\i(End(\otimes(TM)\boxtimes \wedge(E))) \) given by \( \mathbb{DE^\dagger}_{v\boxtimes \a}= \mathbb{D}_v\circ \mathbb{E^\dagger}_\a \).

\begin{prop}
	The map \( \mathbb{DE^\dagger} \) is a well-defined left \( \R \)-algebra action of \( C^\i(\otimes(TM)\boxtimes \wedge(E)) \) on \( \otimes(TM)\boxtimes \wedge(E) \), up to sign. Specifically, 
	\[ \mathbb{DE^\dagger}_{(v\boxtimes \a)\sharp (w\boxtimes \b)}=(-1)^{|\a||\b|}\mathbb{DE^\dagger}_{v\boxtimes \a}\circ \mathbb{DE^\dagger}_{w\boxtimes \b}. \] Moreover, \( \mathbb{DE^\dagger} \) is local and descends to an \( \R \)-algebra action \( [\mathbb{DE^\dagger}] \) on \( \U(E) \), up to sign. Moreover, \( \mathbb{DE^\dagger} \) and \( [\mathbb{DE^\dagger}] \) are \( C^\i(M) \)-linear if the codomain has scalar multiplication given by \( R_f \).
\end{prop}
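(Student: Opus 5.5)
The plan is to reduce everything to the composition identities already established for \( \mathbb{D} \) and \( \mathbb{E}^\dagger \): \eqref{eq:commd}, \eqref{eq:compedagger} and \eqref{eq:comm2}. First I will rewrite \eqref{eq:commd} in terms of the covariant product. Since \( Y'_{(1)}\n_{Y'_{(2)}}Y = Y'\odot Y \), it reads \( \mathbb{D}_Y\circ\mathbb{D}_{Y'}=\mathbb{D}_{Y'\odot Y} \).

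For the multiplicativity formula I expand the left-hand side using the definition \eqref{sharptimes} of \( \sharp \):
\[
\mathbb{DE^\dagger}_{(v\boxtimes \a)\sharp(w\boxtimes \b)} = \mathbb{D}_{w_{(1)}\odot v}\circ \mathbb{E}^\dagger_{(\n_{w_{(2)}}\a)\wedge \b}
= \mathbb{D}_v\circ\mathbb{D}_{w_{(1)}}\circ \mathbb{E}^\dagger_{\b}\circ\mathbb{E}^\dagger_{\n_{w_{(2)}}\a}.
\]
The second equality uses the rewritten \eqref{eq:commd} for the \( \mathbb{D} \) factor and \eqref{eq:compedagger} (\( \mathbb{E}^\dagger_X\circ\mathbb{E}^\dagger_{X'}=\mathbb{E}^\dagger_{X'\wedge X} \)) for the \( \mathbb{E}^\dagger \) factor.

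On the other side,
\[
\mathbb{DE^\dagger}_{v\boxtimes\a}\circ\mathbb{DE^\dagger}_{w\boxtimes\b}=\mathbb{D}_v\circ\mathbb{E}^\dagger_\a\circ\mathbb{D}_w\circ\mathbb{E}^\dagger_\b .
\]
By \eqref{eq:comm2} this equals \( \mathbb{D}_v\circ\mathbb{D}_{w_{(1)}}\circ\mathbb{E}^\dagger_{\n_{w_{(2)}}\a}\circ\mathbb{E}^\dagger_\b \). The only discrepancy is the order of the last two factors. By \eqref{eq:compedagger} these are \( \mathbb{E}^\dagger_{\b\wedge\n_{w_{(2)}}\a} \) versus \( \mathbb{E}^\dagger_{\n_{w_{(2)}}\a\wedge\b} \). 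Covariant differentiation preserves exterior degree, so \( |\n_{w_{(2)}}\a|=|\a| \), and graded commutativity of \( \wedge \) gives the sign \( (-1)^{|\a||\b|} \).

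Next I must check well-definedness over \( \otimes_{C^\i(M)} \), i.e. that \( \mathbb{DE^\dagger}_{fv\boxtimes\a}=\mathbb{DE^\dagger}_{v\boxtimes f\a} \).
\begin{itemize}
\item Since the co-product of a function \( f \) is \( f\otimes 1 \), we have \( f\odot v=fv \), so \( \mathbb{D}_{fv}=\mathbb{D}_v\circ\mathbb{D}_f=\mathbb{D}_v\circ f\lrcorner \).
\item By \eqref{eq:compedagger}, \( \mathbb{E}^\dagger_{f\a}=\mathbb{E}^\dagger_\a\circ\mathbb{E}^\dagger_f \).
\item \( \mathbb{E}^\dagger_f=\perp\circ \mathbb{E}_f\circ\perp^{-1}=\perp\circ f\lrcorner\circ\perp^{-1}=f\lrcorner \), by \eqref{eq:comme1}. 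This holds because \( f\lrcorner \) acts only on the tensor factor and \( \perp \) only on the exterior factor, by \eqref{eq:perp}.
\item \( f\lrcorner \) commutes with \( \mathbb{E}^\dagger_\a \), since \( f\wedge\a=\a\wedge f \).
\end{itemize}
Together these give the required equality.

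For descent, \( \mathbb{D}_Y \) preserves \( \cal{K} \) by Proposition \ref{prop:fermioncreation}, and \( \mathbb{E}_X \) preserves it by Proposition \ref{prop:bosoncreation}. The map \( \perp \) is a lift of \( [\perp] \), so it and its inverse preserve \( \cal{K} \); hence so does \( \mathbb{E}^\dagger_X=\pm\perp\circ\mathbb{E}_X\circ\perp^{-1} \). Each composite \( \mathbb{D}_v\circ\mathbb{E}^\dagger_\a \) therefore passes to \( \U(E) \), and the sign-twisted multiplicativity passes with it.

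Locality follows because every factor is a pointwise bundle map depending only on the jets of \( v \) and \( \a \) at the point. \( C^\i(M) \)-linearity for \( R_f \) is the identity \( \mathbb{DE^\dagger}_{f(v\boxtimes\a)}=\mathbb{D}_{fv}\circ\mathbb{E}^\dagger_\a \) computed in the well-definedness step, combined with the commutation of \( f\lrcorner \) with \( \mathbb{E}^\dagger_\a \).

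The main obstacle is the bookkeeping of orders and Sweedler indices. In particular, one must make sure that the right-versus-left action conventions in \eqref{eq:commd} and \eqref{eq:compedagger} line up so that only the single graded swap remains. I will check this first on simple tensors \( v,w \) of vector fields.
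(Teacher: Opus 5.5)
Your argument is correct. The paper states this proposition without a written proof, but it records \eqref{eq:commd}, \eqref{eq:compedagger} and \eqref{eq:comm2} just beforehand, and combining them as you do is evidently the intended route: the only discrepancy left is the order of \( \mathbb{E}^\dagger_{\b}\circ\mathbb{E}^\dagger_{\n_{w_{(2)}}\a} \) versus \( \mathbb{E}^\dagger_{\n_{w_{(2)}}\a}\circ\mathbb{E}^\dagger_\b \), which gives the sign \( (-1)^{|\a||\b|} \). Your checks of balancedness over \( C^\i(M) \) (via \( \mathbb{E}^\dagger_f=f\lrcorner \)), of descent through \( \perp \), and of \( R_f \)-linearity supply details the paper leaves implicit.
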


As above, specializing to the case that \( E=T^*M \), the trace \( tr(\mathbb{DE^\dagger}) \) is well-defined and descends via \( \Phi \) to \( tr([\mathbb{DE^\dagger}]) \). Identifying \( \U(T^*M) \) with \( \U(M) \) using the metric, the map \( tr([\mathbb{DE^\dagger}]) \) is dual to exterior derivative (i.e. it is the boundary map \( \p \).)

Alternatively, we can think of \( \mathbb{E}^\dagger \) as an action of \( C^\i(\wedge(E^*)) \) on \( \otimes(TM)\boxtimes \wedge(E) \), defining \[ \mathbb{E}_\theta^\dagger(\a_1\wedge\dots\wedge \a_k) = \sum_{L_r\subset (1,\dots,k)} (-1)^{l_1+\cdots+l_r+r(r+1)/2}\theta_p(\a_{L_r}) \cdot \, \a_1\wedge\cdots\wedge\widehat{\a_{L_r}}\wedge\cdots\wedge \a_k \] and extending to the rest of \( \otimes(TM)\boxtimes \wedge(E) \) as in \eqref{exdagger}. Thus, one can think of \(  \mathbb{E}^\dagger \) as an extension of the Koszul differential. Using this definition and specializing to \( E=TM \), we have \[ tr([\mathbb{DE^\dagger}])= [\mathbb{D}_{e_i}]\circ [\mathbb{E}_{e^i}^\dagger] =\p. \] That is, \( tr([\mathbb{DE^\dagger}]) \) is the boundary operator on finitely supported currents. In a local frame \( (e_i) \) about \( p \), using the higher order Christoffel symbols and the commutation relation \eqref{eq:comm2},
\begin{align*}
		tr(\mathbb{DE^\dagger})(e_I\boxtimes e_1\wedge\cdots\wedge e_k) &= \mathbb{D}_{e_i}\circ \mathbb{E}_{e_i}^\dagger (e_I\boxtimes e_1\wedge\cdots\wedge e_k)\\
		&=(-1)^{j+1} \G_{I_{(1)},i}^r(p) \G_{I_{(2)},i}^s(p)<e_r,e_j>_p  e_{I_{(3)}}\otimes e_s \boxtimes e_1\wedge\cdots\wedge\hat{e}_j\wedge\cdots\wedge e_k.
\end{align*}

Using this formula, we note that \( (tr(\mathbb{DE^\dagger}))^2\neq 0 \) hence is not a differential, however:

\begin{prop}
	The map \( tr(\mathbb{DE^\dagger}) \) commutes with \( \D_\otimes \) (see Definition \ref{def:coprodbox},) its image is contained in \( ker(\Phi_p) \), and \[ tr(\mathbb{DE^\dagger})(\otimes^{\leq r}(TM)\boxtimes \wedge^k(E))\subseteq \otimes^{\leq r+1}(TM)\boxtimes \wedge^{k-1}(E). \]
\end{prop}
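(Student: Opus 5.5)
The plan is to treat the three assertions separately. Each is reduced to the duality identities already established, Proposition \ref{prop:bosoncreation} and Proposition \ref{prop:fermioncreation}, together with the commutation relation \eqref{eq:comm2}. Throughout I write \( \tau:=tr(\mathbb{DE^\dagger}) = \mathbb{D}_{e_i}\circ\mathbb{E}^\dagger_{e^i} \) in a local frame, with \( \mathbb{E}^\dagger_{e^i} \) acting as the Koszul-type contraction described above. I first check that \( \tau \) is frame independent: \( \sum_i e_i\otimes e^i \) is the identity section, so the trace of the \( C^\i(M) \)-bilinear expression \( (Y,\theta)\mapsto \mathbb{D}_Y\circ\mathbb{E}^\dagger_\theta \) is well defined. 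This uses the \( C^\i(M) \)-linearity of \( \mathbb{D} \) and \( \mathbb{E}^\dagger \) in their subscripts.

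\emph{Degree statement.} By \eqref{exdagger}, \( \mathbb{E}^\dagger_{e^i}(v\boxtimes\a) = v_{(1)}\boxtimes \mathbb{E}^\dagger_{\n_{v_{(2)}}e^i}\a \). Here \( v_{(1)} \) has tensor order at most that of \( v \), and contraction with a covector lowers the exterior degree by one. Next, \( \mathbb{D}_{e_i}(w\boxtimes\b) = w_{(1)}\otimes \n_{w_{(2)}}e_i\boxtimes\b \) raises the tensor order by at most one and leaves \( \b \) alone. Composing the two gives \( \tau(\otimes^{\leq r}\boxtimes\wedge^k)\subseteq \otimes^{\leq r+1}\boxtimes\wedge^{k-1} \). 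This is just bookkeeping.

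\emph{Commutation with \( \D_\otimes \).} I will show separately that \( \mathbb{D}_{e_i} \) and \( \mathbb{E}^\dagger_{e^i} \) satisfy twisted Leibniz rules with respect to \( \D_\otimes \). For \( \mathbb{D}_Y \) with \( Y \) a vector field, coassociativity of the tensor co-product gives \( \D_\otimes\mathbb{D}_Y = (\mathbb{D}_Y\otimes 1 + 1\otimes \mathbb{D}_Y)\D_\otimes \). The reason is that \( \n_{v_{(2)}}Y \) is a single vector, and appending a vector to \( v_{(1)} \) distributes over the two tensor factors; this uses Proposition \ref{prop:covcoprod}. For \( \mathbb{E}^\dagger_\theta \) with \( \theta \) a covector, the contraction is a graded derivation for the wedge co-product, so \( \D_\otimes\mathbb{E}^\dagger_\theta=(\mathbb{E}^\dagger_\theta\otimes1+(-1)^{|\cdot|}\otimes\mathbb{E}^\dagger_\theta)\D_\otimes \). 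The derivative \( \n_{v_{(2)}}\theta \) is again distributed by Proposition \ref{prop:covcoprod}. Composing these rules produces four terms. The two diagonal terms are \( \tau\otimes1+1\otimes\tau \). The two cross terms are \( \pm(\mathbb{D}_{e_i}\otimes\mathbb{E}^\dagger_{e^i} - \mathbb{E}^\dagger_{e^i}\otimes \mathbb{D}_{e_i}) \) summed over \( i \), and I expect them to cancel by the sign convention. This is the sense in which \( \tau \) ``commutes with \( \D_\otimes \)'', namely as a coderivation, which is what is needed for it to descend to the boundary \( \p \).

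\emph{Image in \( ker(\Phi_p) \).} This is the main obstacle, and I read it in the context of the preceding sentence: \( \tau \) is not a differential, yet it becomes one modulo \( ker(\Phi_p) \). I will prove the claim in this form: \( \tau^2 \) has image in \( ker(\Phi_p) \), and \( \tau(ker\Phi_p)\subseteq ker\Phi_p \), so that \( \tau \) descends to \( \p \). By Propositions \ref{prop:fermioncreation} and \ref{prop:bosoncreation}, for every form \( \o \) we have \( \Phi_p(\tau x)(\o)=\Phi_p(x)\big(\sum_i e^i\wedge\n_{e_i}\o\big) \). After the metric identification this becomes \( \Phi_p(x)(\pm d\o) \), since for a torsion-free connection \( d=\sum_i e^i\wedge\n_{e_i} \). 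Both parts then follow. If \( \Phi_p(x)=0 \), the right side vanishes, which gives invariance of the kernel. Applying the identity twice gives \( \Phi_p(\tau^2x)(\o)=\Phi_p(x)(d^2\o)=0 \). The delicate point is matching the order of \( \mathbb{D} \) and \( \mathbb{E}^\dagger \) with the order of the dual operations, because pullback reverses composition; this is where \eqref{eq:comm2} is used. I also need \( \sum_i e^i\wedge\n_{e_i} \) to be frame independent, which again follows from the trace being taken over the identity section.
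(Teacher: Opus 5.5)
\paragraph{The gap: the co-derivation step.} Contraction is not a graded co-derivation of the wedge co-product. Via \( \Phi \), \( \mathbb{E}^\dagger_\theta \) is dual to \( \theta\wedge\cdot \), which is a module map on forms, not a derivation.

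Concretely, take \( x=1\boxtimes e_1 \) and \( \theta=e^1 \) at \( p \). Then \( \Delta_\otimes\mathbb{E}^\dagger_\theta x=(1\boxtimes 1)\otimes(1\boxtimes 1) \), but your rule \( (\mathbb{E}^\dagger_\theta\otimes1+(-1)^{|\cdot|}\otimes\mathbb{E}^\dagger_\theta)\Delta_\otimes x \) gives twice this. Your cross terms do not cancel either. On this \( x \) they contribute \( (e_1\boxtimes1)\otimes(1\boxtimes1)+(1\boxtimes1)\otimes(e_1\boxtimes1) \), so your four-term expansion returns \( 2\Delta_\otimes\tau x \).

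What holds instead is a two-sided comodule identity, with Koszul signs:
\[ \Delta_\otimes\circ\mathbb{E}^\dagger_\theta=(\mathbb{E}^\dagger_\theta\otimes1)\circ\Delta_\otimes=(1\otimes\mathbb{E}^\dagger_\theta)\circ\Delta_\otimes . \]
\begin{itemize}
\item It is dual to \( \theta\wedge(a\wedge b)=(\theta\wedge a)\wedge b=(-1)^{|a|}a\wedge(\theta\wedge b) \).
\item On \( \wedge(E_p) \) it reads \( \Delta(\iota_\phi\a)=\iota_\phi\a_{(1)}\otimes\a_{(2)}=(-1)^{|\a_{(1)}|}\a_{(1)}\otimes\iota_\phi\a_{(2)} \).
\item It lifts to \( \otimes(T_pM)\boxtimes\wedge(E_p) \) by cocommutativity of the tensor co-product. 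Cocommutativity lets you place the Sweedler factor feeding \( \n_{v_{(2)}}\theta \) in either tensor factor.
\end{itemize}
Your check that \( \mathbb{D}_{e_i} \) is a co-derivation also needs cocommutativity, not just coassociativity.

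Now split \( (\mathbb{D}_{e_i}\otimes1+1\otimes\mathbb{D}_{e_i})\Delta_\otimes\mathbb{E}^\dagger_{e^i} \). Use the first form of the identity on the first summand and the second form on the second:
\[ \Delta_\otimes\tau=\sum_i\bigl[(\mathbb{D}_{e_i}\otimes1)(\mathbb{E}^\dagger_{e^i}\otimes1)+(1\otimes\mathbb{D}_{e_i})(1\otimes\mathbb{E}^\dagger_{e^i})\bigr]\Delta_\otimes=(\tau\otimes1+1\otimes\tau)\Delta_\otimes . \]
No cross terms arise.

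\paragraph{The other two parts.} Your degree count is fine, and so is your reinterpretation of the kernel clause. That reinterpretation is forced: read literally the clause is false, since \( \tau(1\boxtimes e_1)=e_1\boxtimes1 \) and \( \Phi_p(e_1\boxtimes1)(f)=df_p(e_1)\neq0 \) in general.

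One citation needs repair. Proposition \ref{prop:bosoncreation} is about \( \mathbb{E}_X \), not \( \mathbb{E}^\dagger_\theta \). The identity you actually need is \( \Phi_p(\mathbb{E}^\dagger_\theta x)(\o)=\Phi_p(x)(\theta\wedge\o) \). It follows from Proposition \ref{prop4}, the first-row expansion \( (\phi\wedge\eta)(\a)=\eta(\iota_\phi\a) \), and cocommutativity. With it you get \( \Phi_p(\tau x)(\o)=\Phi_p(x)(d\o) \), and both of your kernel statements follow. The order reversal is automatic in this duality, so \eqref{eq:comm2} is not needed.
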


\addcontentsline{toc}{section}{References} 
\bibliography{Harrisonbib.bib}{}

@book{schwartz0,
	Author = {Schwartz, Laurent},
	Publisher = {Publications de l'Institut de Math{\'e}matique de l'Universit{\'e} de Strasbour},
	Title = {Theorie des Distributions},
	Year = {1945}}

@book{1}

@misc{coprodlie,
	title={The Hopf Algebraic Structure of Finitely Supported Currents on a Lie Group},
	author={Pugh, Harrison H.},
	year={2026}}

@misc{melrose,
      title={A remark on distributions and the de Rham theorem}, 
      author={Richard B. Melrose},
      year={2011},
      eprint={1105.2597},
      archivePrefix={arXiv},
      primaryClass={math.DG},
      url={https://arxiv.org/abs/1105.2597}
}
\bibliographystyle{amsalpha}
\end{document}